\tikzstyle{vertex}=[circle, draw, inner sep=2pt, fill=white]
\newcommand{\cc}{{\textsc{c}}}
\renewcommand{\d}{{\mathrm{d}}}
\newcommand{\E}{{\mathbb E}}
\renewcommand{\P}{{\mathbb P}}
\newcommand{\R}{{\mathbb R}}
\newcommand{\N}{{\mathbb N}}
\newcommand{\Z}{{\mathbb Z}}
\newcommand{\Fcal}{{\mathcal F}}
\newcommand{\Wcal}{{\mathcal W}}
\newtheorem{theorem}{Theorem}
\newtheorem{assump}{Assumptions}
\newtheorem{assumpB}{Assumptions}
\newtheorem{corollary}[theorem]{Corollary}
\theoremstyle{definition}
\newtheorem{definition}[theorem]{Definition}
\newtheorem{remark}[theorem]{Remark}
\newtheorem{example}[theorem]{Example}
\newtheorem{lemma}[theorem]{Lemma}
\newtheorem{proposition}[theorem]{Proposition}
\numberwithin{equation}{section}
\numberwithin{theorem}{section}
\title{{Polynomial McKean-Vlasov SDEs}}
\author{Christa Cuchiero\thanks{
Vienna University, Department of Statistics and Operations Research, Data Science Uni Vienna, Kolingasse 14-16 1, A-1090 Wien, Austria, christa.cuchiero@univie.ac.at}
\and Janka M\"oller\thanks{Vienna University, Department of Statistics and Operations Research, Kolingasse 14-16, A-1090 Wien, Austria, janka.moeller@univie.ac.at
\newline
This research was funded in whole or in part by the Austrian Science Fund (FWF) 10.55776/Y1235. \newline
The authors thank Martin Larsson and Etienne Tanré for fruitful discussions and suggestions.
%The authors gratefully acknowledge financial support through grant Y 1235 of the Austrian Science Fund. 
}}
\date{\today}
\begin{document}

\maketitle

\begin{abstract}
We study a new class of McKean-Vlasov stochastic differential equations (SDEs), possibly with common noise,   applying the theory of time-inhomogeneous polynomial processes. 
 The drift and volatility coefficients of these SDEs depend on the state variables themselves as well as their conditional moments in a way that mimics the standard polynomial structure. Our approach leads to new results on the existence and uniqueness of solutions to such conditional McKean-Vlasov SDEs which are, to the best of our knowledge, not obtainable using standard methods.
 Moreover, we show in the case without common noise that the moments of these McKean-Vlasov SDEs can be computed by non-linear ODEs. 
  As a by-product, this also yields new results on the existence and uniqueness of global solutions to certain ODEs.
  
\end{abstract}

\noindent\textbf{Keywords:}  McKean-Vlasov stochastic differential equations, polynomial processes,\\ time-inhomogeneous Markov processes, non-linear ODEs\\
\noindent \textbf{MSC (2020) Classification:} 60H10, 60J25, 34A34

\tableofcontents

\section{Introduction}
Stochastic differential equations (SDEs) of McKean-Vlasov type are a class of SDEs where the drift and diffusion coefficient depend not only on time and the current state but also on the marginal law of the solution. To make this precise, let us first consider SDEs (without common noise) of the form 
\begin{align}
\begin{cases}
dX_t = B(t,X_t, \boldsymbol{\mu}_t) dt + \Sigma(t, X_t, \boldsymbol{\mu}_t) dW_t,\\
\boldsymbol{\mu}_t = \mathrm{Law}(X_t),
\end{cases}
\end{align}
where $(W_t)_{t\geq 0}$ is a one-dimensional Brownian motion. We refer to such equations as \emph{McKean-Vlasov SDEs} in the following. Of course, McKean-Vlasov SDEs can be extended to multiple dimensions, in the sense that  $X_t$  is $\mathbb{R}^d$-valued for all $t\geq0$. For the purpose of this work, we however,  restrict ourselves to the one-dimensional case. 

McKean-Vlasov equations were introduced in~\cite{Mckean} and have since then been widely studied. They gained importance in  various fields such as finance~\cite{Guyon_Book,  Zhang_phd, carmona_2015, Jourdain, Lacker_Shkolnikov}, biology~\cite{Meleard1996, population_gen1, population_gen2, chemotaxis}, physics~\cite{torus, crystal, bittencourt2013fundamentals} as well as optimal control~\cite{Guo_Ito, Pham_2017, Lacker}.

McKean-Vlasov SDEs often arise from a particle system, where the particles' dynamics depend on the empirical measure of the system and the number of particles goes to infinity. In this limit, one can then describe a "typical" particle by McKean-Vlasov dynamics, see e.g.~\cite{MV_overview, swarming} for an overview. In the context of this paper, we study the McKean-Vlasov SDEs directly, although, for applications, such SDEs supposedly arise by taking the limit of some particle system. Establishing the existence of the limit stemming from a particle system rigorously will involve specific techniques tailored to the current setting, compare e.g.,~\cite{Huber_2024, Shkolnikov}, and is beyond the scope of this paper.

In our work, we are interested in building a connection between the theory of polynomial processes and McKean-Vlasov SDEs. This allows us to develop new techniques to establish existence and uniqueness results for certain classes of McKean-Vlasov SDEs which cannot be treated by standard methods. 

Before we elaborate more on our approach, we would like to provide a brief - by no means complete - overview on existence and uniqueness results for solutions to McKean-Vlasov SDEs. The common approach consists in considering a space of probability measures $\mathscr{P}$ (often the Wasserstein-2-space) and studying the maps $B, \Sigma$ on the space $[0,\infty) \times \R^d \times \mathscr{P}$. A standard result in the literature is that existence and uniqueness of solutions to McKean-Vlasov SDEs can be established by requiring $B, \Sigma$ to be Lipschitz-continuous in all arguments, and then applying a fixed-point argument, see~\cite{Sznitman_chaos, MV_overview}. Since such Lipschitz assumptions are often too restrictive, finding milder conditions is an active field of research, and we would like to briefly point out a few such approaches. For examples, the authors of~\cite{Osgood} go beyond Lipschitz-continuity requiring instead an Osgood-type condition, however not considering any law-dependence of the diffusion term. In~\cite{Ding_Qiao}, the authors use Euler-Maruyama approximations to show existence of a solution to a martingale problem, for which they establish in turn  pathwise uniqueness leading to a strong solution. The approach taken in~\cite{Szpruch_MV} is based on Lyapunov functions to show existence and uniqueness of solutions. Another route is taken in~\cite{Mishura} where the authors study the existence of weak and strong solutions to McKean-Vlasov SDEs, however requiring non-degeneracy in the diffusion term for weak solutions and additionally Lipschitz-continuity in the state-variable of the diffusion term to construct a strong solution. Very recently,~\cite{Mao} have established existence of McKean-Vlasov SDEs under mild continuity assumptions, in particular, without requiring the dependence on the law to be Lipschitz continuous. However, in general they do not obtain uniqueness of the solution. \\

In contrast to those approaches we consider a more specific dependence of the SDEs' coefficients on the law, that is, we let them primarily depend on the marginal moments. By this restriction, we do not only get rid of Lipschitz-requirements on the diffusion term, but we avoid having to work on the space of probability measures. Instead, we can fully treat the McKean-Vlasov SDEs under our specification using the theory of time-inhomogeneous Markov processes, without explicit dependence on the marginal laws. In particular, we apply the theory of (time-inhomogeneous) polynomial processes, which was first introduced in \cite{Cuchiero_2012}. We also refer the reader to~\cite{script_Larsson} for an introduction to the topic. 

The appeal of this class of processes lies in the fact that one has an analytic expression for the (conditional) moments, which is computable via the solution of a linear ODE. For our purpose, we need to work in a time-inhomogeneous setting, which was first considered in~\cite{Hurtado_Schmidt}. The polynomial property of a Markov process is a requirement on %its transition operator and 
infinitesimal generator. More specifically, they are assumed to map polynomials to (time-dependent) polynomials of the same or lower degree. In this context, it is therefore natural to establish also existence of solutions to SDEs via properties of their generators, that is via martingale problems. This is the approach we pursue in this work, as outlined in detail in Subsections~\ref{subsec:ex_uniq_MP} and~\ref{sec:Poly}.  \\

To be more specific, we treat, in the case without common noise, McKean-Vlasov SDEs of the following form and establish existence and uniqueness of solutions thereof: 
\begin{theorem}\label{thm:informal_poly_MV}
Consider a one-dimensional McKean-Vlasov SDE of the form
\begin{align}\label{eq:eqintro}
dZ_t &= \left(b(\E[\Z_t]) + \beta(\E[\Z_t]) Z_t \right)dt + \sqrt{c(\E[\Z_t])+ \gamma(\E[\Z_t])Z_t+\Gamma(\E[\Z_t]) Z_t^2}dW_t
\end{align}
where $(\mathbb{Z}_t)_{t\geq 0}:= (Z_t, Z^2_t, \dots, Z^N_t)_{t\geq 0}$ for $N\geq1$ and with fixed $\E[\Z_0]\in \R^N$, some maps $b,\, \beta, \,c, \,\gamma, \, \Gamma: \R^N \rightarrow \R$ and a Brownian motion $(W_t)_{t\geq 0}$. If $b, \beta, c, \gamma, \Gamma$ are locally Lipschitz continuous on $\R^N$, satisfy either the set of Assumptions~\ref{as:A} or Assumptions~\ref{as:B} (specified below), and the term under the square-root is non-negative, then there exists a unique strong solution $(Z_t)_{t\geq 0}$. Moreover, $(Z_t)_{t\geq 0}$ is a time-inhomogeneous polynomial process whose moments can be computed by a non-linear ODE, see~\eqref{eq:ode_L}.
\end{theorem}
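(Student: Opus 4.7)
The plan is to decouple the McKean-Vlasov problem into a deterministic \emph{non-linear moment ODE} plus a \emph{time-inhomogeneous polynomial SDE} driven by the solution of that ODE. The guiding observation is that for each fixed $m \in \R^N$, the infinitesimal operator
\[
\Lcal^m f(z) = \bigl(b(m) + \beta(m) z\bigr) f'(z) + \tfrac12 \bigl(c(m) + \gamma(m) z + \Gamma(m) z^2\bigr) f''(z)
\]
maps polynomials of degree $\le k$ into polynomials of degree $\le k$, i.e. it is polynomial in the sense of \cite{Cuchiero_2012}. Consequently, once the deterministic curve $t \mapsto m(t) := \E[\Z_t]$ is known, the coefficients of the SDE \eqref{eq:eqintro} become deterministic time-dependent polynomial coefficients in $Z_t$, and the whole McKean-Vlasov dependence is absorbed into this single vector-valued curve.

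\textbf{Step 1: deriving the closed moment ODE.} Computing $\Lcal^m z^k$ explicitly and assuming that $M_k(t) = \E[Z_t^k]$ are finite for $k=0,\dots,2N$, one obtains the linear relation
\[
\dot M_k(t) = \Bigl(k\beta(m(t)) + \tfrac{k(k-1)}{2}\Gamma(m(t))\Bigr) M_k(t) + \Bigl(kb(m(t)) + \tfrac{k(k-1)}{2}\gamma(m(t))\Bigr) M_{k-1}(t) + \tfrac{k(k-1)}{2} c(m(t))\, M_{k-2}(t),
\]
which for $k=1,\dots,N$ together with the self-consistency $m(t) = (M_1(t),\dots,M_N(t))$ collapses into a closed non-linear ODE on $\R^N$ of the form $\dot m(t) = F(m(t))$; this is exactly the ODE \eqref{eq:ode_L} anticipated in the statement. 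Local Lipschitz continuity of $b,\beta,c,\gamma,\Gamma$ transfers to $F$, so Picard-Lindelöf yields a unique local $C^1$-solution from the given initial condition $\E[\Z_0]$.

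\textbf{Step 2: global existence of $m(\cdot)$.} This is the main obstacle. I would invoke either Assumptions \ref{as:A} or \ref{as:B} to produce an a priori bound (e.g.\ via a Lyapunov function or via linear growth estimates tailored to the polynomial structure) that prevents finite-time blow-up of $\|m(t)\|$. Combined with Step 1, this extends $m(\cdot)$ to $[0,\infty)$ and simultaneously establishes the by-product on non-linear ODEs advertised in the abstract.

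\textbf{Step 3: solving the decoupled time-inhomogeneous SDE.} With $m(\cdot)$ now a fixed continuous curve on $[0,\infty)$, \eqref{eq:eqintro} becomes a time-inhomogeneous SDE with locally bounded drift affine in $z$ and diffusion whose square is a non-negative polynomial of degree $\le 2$ in $z$. I would invoke the existence theory for time-inhomogeneous polynomial diffusions developed in Subsections~\ref{subsec:ex_uniq_MP}-\ref{sec:Poly} (existence of a weak solution via the martingale problem for $\Lcal^{m(t)}_t$, using continuity of the coefficients and non-negativity of the diffusion on the natural state space), and derive pathwise uniqueness via a Yamada-Watanabe argument using the at-most-affine drift and at-most-quadratic squared-diffusion structure. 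Yamada-Watanabe then upgrades this to the unique strong solution $(Z_t)_{t\ge 0}$.

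\textbf{Step 4: consistency and uniqueness.} Because $\Lcal^{m(t)}_t$ preserves polynomial degree, the moments of the strong solution from Step 3 solve the linear moment ODE driven by $m(\cdot)$; by construction this linear ODE reduces along $m(\cdot)$ to the non-linear ODE of Step 1, so $\E[Z_t^k] = m_k(t)$ for $k=1,\dots,N$ and $Z$ indeed solves the McKean-Vlasov SDE \eqref{eq:eqintro}. Conversely, the moments of \emph{any} solution to \eqref{eq:eqintro} satisfy the same closed non-linear ODE; its uniqueness from Step 2 pins down the moment curve, and pathwise uniqueness from Step 3 then forces uniqueness of $Z$ itself. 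The polynomial-process conclusion and the ODE representation of the moments are immediate from the construction.
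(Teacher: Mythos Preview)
Your proposal is correct and follows essentially the same ``primal'' approach as the paper (Section~\ref{subsec:primal} and Theorem~\ref{thm:poly_MV}): solve the autonomous moment ODE~\eqref{eq:ode_L} globally under Assumptions~\ref{as:A} or~\ref{as:B}, plug the resulting curve into the SDE to obtain a time-inhomogeneous polynomial diffusion, establish existence via the martingale problem and pathwise uniqueness via a Yamada--Watanabe-type result (Theorem~\ref{thm:sol_MP}), and verify consistency through It\^o's formula. The only point worth tightening in your Step~4 is that ``the moments of any solution satisfy the same closed ODE'' requires the stochastic integrals from It\^o to be true (not merely local) martingales; the paper isolates this as a separate lemma (Lemma~\ref{lem:new_ass_maps}) proved via stopping times and Gronwall bounds, and it is precisely here---not only in Step~2---that the growth Assumptions~\ref{as:A}/\ref{as:B} do real work.
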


Note that in \eqref{eq:eqintro} the polynomial structure of one-dimensional diffusions, exhibiting an affine drift and a quadratic diffusion function, is mimicked.  The respective coefficients of these functions are, however, no longer constants, but general maps of the moments. This structure is the very reason why we obtain an
autonomous non-linear ODE system for the moments.

The above result is actually obtained from two different viewpoints; a primal and a dual approach, which we will both outline in detail later. However, it is important to emphasize that even though both approaches lead to the same sufficient conditions, there are important differences between the two approaches, reflected in the fact that in some cases the existence of a solution can only be shown via one of the two approaches, see Remark \ref{rem:diff_primal_dual} for more details.

Additionally, we would like to point out that studying McKean-Vlasov SDEs from a polynomial perspective has benefits beyond establishing merely existence and uniqueness. Indeed, the solutions are in particular time-inhomogeneous polynomial processes themselves, which means that we obtain analytic expressions for their (conditional) moments. We believe that this can be of importance for modeling purposes. Moreover, we can formulate sufficient conditions for the process to remain in certain state-spaces which are subsets of $\mathbb{R}$. In particular, we can construct McKean-Vlasov SDEs such that the solution stays in a compact set, which is another aspect we believe may be useful for modeling in areas like population genetics. \\

The second part of our main contributions lies in the study of polynomial McKean-Vlasov SDEs  with a common noise term and coefficients depending on the moments conditional on the common noise. We call these processes \emph{conditional polynomial McKean-Vlasov SDEs} and present the corresponding results in Subsection~\ref{subsec:common_noise}. We establish general sufficient conditions for existence and uniqueness of solutions to this of class of McKean-Vlasov SDEs (see Theorem~\ref{thm:cond_existence_MP}, Proposition~\ref{prop:cond_existence_PMP}) and show that for a subclass the solution is even a time-homogeneous polynomial process.

In summary, this paper is structured as follows. In Section~\ref{sec:preliminaries}, we briefly introduce the theory of time-inhomogeneous polynomial processes with an emphasis on the aspects that are relevant for our main results. After that, in Subsection~\ref{subsec:ex_uniq_MP}, we first recall what it means to establish existence of a solution to an SDE via the solution to the associated martingale problem, following the work of~\cite{EK} and then extend some results of~\cite{EK} to milder growth-conditions on time-dependent coefficients of SDEs. We are then ready to present our main results on polynomial McKean-Vlasov SDEs  in Section~\ref{sec:Poly}. There, we present two approaches, called primal and dual, to establish existence of solutions.  The primal approach is then extended to the setting with common noise. 

%beyond with coefficients depending on the marginal moments.  

\section{Preliminaries}\label{sec:preliminaries}

\subsection{Time-inhomogeneous polynomial processes}\label{subsec:intro_poly}
In this subsection, we introduce the notion of time-inhomogeneous polynomial processes and state some of their properties, which will be crucial for the main results of this paper.  We mainly follow~\cite{Hurtado_Schmidt}. However, while in~\cite{Hurtado_Schmidt} general time-inhomogeneous polynomial processes possibly with jumps are studied, we restrict ourselves to the case of continuous processes, %in particular those  resulting from the solution of an SDE, 
which is the most relevant setting for us. On the other hand, the authors of~\cite{Hurtado_Schmidt} formulate everything on a finite time-horizon, while we consider a possibly infinite time-horizon. 
This extension is straightforward, and we therefore only comment briefly on the required adjustments in some of the proofs. 

Before moving to polynomial processes, we first briefly recall the definition of a Markov process as well as its associated family of transition functions and transition operators, where we mostly follow~\cite{E:16}. For further details on Markov processes, we refer to~\cite{E:16, EK}.

Throughout, we fix a filtered probability space $(\Omega, \mathcal{F}, (\mathcal{F}_t)_{t\in[0,\infty)}, \mathbb{P})$. We consider a state-space $S\subseteq \R^d$ and denote its Borel $\sigma$-algebra  by $\mathcal{S}$.

\begin{definition}[Markov Process]
 We call a stochastic process $(X_t)_{t\in [0,\infty)}$ taking values in $S$ a \emph{Markov process} if it is $(\mathcal{F}_t)_{t\in [0,\infty)}$-adapted and  
$$\mathbb{P}\left[X_t \in B\lvert \mathcal{F}_s \right]= \mathbb{P}\left[X_t \in B \lvert X_s \right] \quad \P\textrm{-a.s. for any } B \in \mathcal{S}\textrm{ and } 0\leq s \leq t < \infty.$$
\end{definition}

\begin{definition}[Transition Function]
Let us define $\Delta= \{(s,t)\, \lvert \, 0\leq s\leq t < \infty\}$. We associate with a Markov process $X$ a family of functions, called \emph{transition function}, $(p_{s,t})_{(s,t)\in \Delta}$ such that $p_{s,t}: (S, \mathcal{S}) \rightarrow [0,1]$ for all $(s,t)\in \Delta$ and $$ \P\left[X_t \in B \lvert \mathcal{F}_s\right] = p_{s,t}(X_s, B) \quad \P\textrm{-a.s. for any }(s,t)\in \Delta\textrm{ and } B\in \mathcal{S}.$$
\end{definition}

\begin{definition}[Transition Operator]\label{def:transition_op}
We can describe the Markov process $(X_t)_{t\in [0,\infty)}$ via \emph{transition operators} $(P_{s,t})_{(s,t)\in \Delta}$ with 
$$ P_{s,t} f(x) = \int_S f(\xi) p_{s,t}(x, d\xi)$$
for all $x\in S$ and $\mathcal{S}$-measurable functions $f:S\rightarrow \R_{\geq 0}$. Moreover, these transition operators satisfy the \emph{Chapman-Kolmogorov} equations 
\begin{equation*}
P_{s,u} f(x)= P_{s,t}P_{t,u} f(x) 
\end{equation*}
for all $0\leq s \leq t \leq u < \infty$ and $f$ as above. 
\end{definition}

Following~\cite{Hurtado_Schmidt}, time-inhomogeneous polynomial processes are  a particular type of Markov process, for which we shall need the subsequent notions.

We denote by $\mathbf{n}:= (n_1, \dots, n_d)\in \{0, 1, \dots\}^d$ a multiindex, where $\lvert \mathbf{n}\rvert := n_1+ \dots + n_d$ and for  $x\in \R^d$,  $x^{\mathbf{n}}:= x_1^{n_1}\cdots x_d^{n_d}$. Let us introduce the following spaces of (time-dependent) polynomials up to degree $m\geq 0$: 
\begin{align*}
\mathcal{P}_m(S) &:= \left\{x \mapsto \sum_{\lvert \mathbf{n}\rvert=0}^m a_{\mathbf{n}} x^{\mathbf{n}}\, \lvert \, x \in S, a_{\mathbf{n}}\in \R \right\}\\
\tilde{\mathcal{P}}_m(S)&:= \left\{ (s,t,x) \mapsto \sum_{\lvert \mathbf{n}\rvert=0^m} a_{\mathbf{n}}(s,t)x^{\mathbf{n}} \, \lvert \,  (s,t,x) \in \Delta\times S, \, a_{\mathbf{n}}(\cdot) \in C^1(\Delta, \R) \right\}.
\end{align*}

\begin{definition}[Polynomial Process]
We call a family of transition operators $(P_{s,t})_{(s,t)\in \Delta}$ \emph{polynomial} if for all $m\geq 0$ and $f\in \mathcal{P}_m(S)$ it holds that  
\begin{equation*}
\left((s,t,x) \mapsto P_{s,t}f(x)\right) \in \tilde{\mathcal{P}}_m(S).
\end{equation*}
Moreover, we call a Markov process $(X_t)_{t\in [0,\infty)}$ a \emph{$($time-inhomogeneous$)$ polynomial process} if its associated transition operators are polynomial. 
\end{definition}

\begin{remark}
Let us note that in the above definition, we have implicitly assumed that polynomials are $\mathcal{S}$-measurable and, more crucially, that all moments exist.
\end{remark}

\begin{definition}[Infinitesimal Generator]\label{def:infi_generator}
	For a Markov process with transition operators $(P_{s,t})_{(s,t)\in\Delta}$ we define the family of \emph{infinitesimal generators} $(\mathcal{H}_s)_{s\in [0,\infty)}$ as the linear operators given by,
	\begin{equation*}
		(\mathcal{H}_sf)(x) := \lim_{h\searrow 0} \frac{(P_{s, s+h} f- f)(x)}{h} \quad \textrm{for all } x\in S, 
	\end{equation*}
	whenever this limit exists. We denote the set of $\mathcal{S}$-measurable functions $f:S \rightarrow \R$ for which the limit exists by $\mathcal{D}(\mathcal{H}_s)$.

\end{definition}

\begin{proposition}
	Let $(\mathcal{H}_s)_{s\in [0, \infty)}$ be the infinitesimal generator associated with a polynomial transition operator, then it holds for all $m\geq 0$ and $s\in [0,\infty)$ that $\mathcal{P}_m(S)\subseteq \mathcal{D}(\mathcal{H}_s)$ and $\mathcal{H}_s(\mathcal{P}_m(S)) \subseteq \mathcal{P}_m(S)$.
\end{proposition}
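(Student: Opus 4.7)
The plan is to unfold both conclusions directly from the definition of a polynomial transition operator, by differentiating the coefficient functions in the time variable. Fix $m \geq 0$, $s \in [0,\infty)$, and $f \in \mathcal{P}_m(S)$. By the polynomiality assumption, the map $(s,t,x) \mapsto P_{s,t} f(x)$ belongs to $\tilde{\mathcal{P}}_m(S)$, so we can choose coefficient functions $a_{\mathbf{n}}(s,t) \in C^1(\Delta,\R)$ for $|\mathbf{n}| \leq m$ such that
\begin{equation*}
  P_{s,t} f(x) \;=\; \sum_{|\mathbf{n}|=0}^{m} a_{\mathbf{n}}(s,t)\, x^{\mathbf{n}}, \qquad (s,t,x) \in \Delta \times S.
\end{equation*}

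Next I would record the diagonal identity $P_{s,s} = \mathrm{Id}$, which follows from the fact that the transition function satisfies $p_{s,s}(x,B) = \delta_x(B)$ (since $\P[X_s \in B \mid \mathcal{F}_s] = \mathbf{1}_B(X_s)$ $\P$-a.s.). In particular, $P_{s,s} f = f$, and therefore
\begin{equation*}
  \frac{(P_{s,s+h} f - f)(x)}{h} \;=\; \sum_{|\mathbf{n}|=0}^{m} \frac{a_{\mathbf{n}}(s,s+h) - a_{\mathbf{n}}(s,s)}{h}\, x^{\mathbf{n}}
\end{equation*}
for every $h > 0$ small enough and every $x \in S$. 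Because each $a_{\mathbf{n}}$ is $C^1$ on $\Delta$, the one-sided partial derivative $\partial_t a_{\mathbf{n}}(s,t)\big|_{t=s^+}$ exists as a real number, so letting $h \searrow 0$ the limit exists pointwise in $x$ and equals
\begin{equation*}
  (\mathcal{H}_s f)(x) \;=\; \sum_{|\mathbf{n}|=0}^{m} \bigl(\partial_t a_{\mathbf{n}}(s,t)\big|_{t=s}\bigr)\, x^{\mathbf{n}}.
\end{equation*}

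This simultaneously yields both inclusions: the limit defining $(\mathcal{H}_s f)(x)$ exists for all $x \in S$, so $f \in \mathcal{D}(\mathcal{H}_s)$, and the right-hand side displays $\mathcal{H}_s f$ as a polynomial of degree at most $m$, so $\mathcal{H}_s f \in \mathcal{P}_m(S)$. I do not anticipate any serious obstacle: the content is essentially bookkeeping, and the only subtle point worth spelling out is that although a polynomial representation of $(s,t,x) \mapsto P_{s,t} f(x)$ on $S$ need not be unique (for instance if $S$ is not Zariski-dense in $\R^d$), the difference quotient $(P_{s,s+h}f - f)/h$ is an unambiguously defined function on $S$ for each $h$, so the limit, and hence the polynomial it equals on $S$, is independent of the chosen representation.
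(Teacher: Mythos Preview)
Your proof is correct and follows essentially the same approach as the paper: both exploit that the polynomial transition operator yields $C^1$ coefficient functions $a_{\mathbf n}(s,t)$, so the limit defining $\mathcal{H}_s f$ reduces to differentiating those coefficients at $t=s$, giving a polynomial of degree at most $m$. Your version is in fact more detailed than the paper's (which is a two-line sketch citing Hurtado--Schmidt), and your remark on the potential non-uniqueness of the polynomial representation on $S$ is a nice clarification not mentioned there.
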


\begin{proof}
	The result follows from the fact that a polynomial transition operator maps polynomials to polynomials with continuously differentiable time-dependent coefficients. Hence, the limit translates to taking the derivative of these coefficients. Note that the coefficients of $\mathcal{H}_sf$ for $f \in \mathcal{P}_m(S)$ depend on $s \in [0,\infty)$. See~\cite[Proof of Lemma 3.3]{Hurtado_Schmidt} for further details. 
\end{proof}

Having introduced the most important concepts of time-inhomogeneous polynomial processes, we  focus now on 
\emph{time-inhomogeneous polynomial diffusions,}
 which will be important for the main results of this paper.

\begin{definition}[Polynomial Diffusion]\label{def:poly_diffusion}
	Let $(X_t)_{t\in [0,\infty)}$ be a continuous  semimartingale with unique decomposition $X_t= X_0 +A_t +M_t$ where $(A_t)_{t\in [0,\infty)}$ is a process of finite variation and $(M_t)_{t\in[0,\infty)}$ is a local martingale. We denote by $(\mathfrak{b},\mathfrak{c})$ the differential characteristics of $X$, that is, 
	\begin{align*}
		A^i &= \int_0^{\cdot} \mathfrak{b}^i(s, X_s) ds \\
		[X^i, X^j]_{\cdot} &= \int_0^{\cdot} \mathfrak{c}^{i,j}(s, X_s) ds
	\end{align*}
	where $[\cdot, \cdot]$ denotes the quadratic variation. We then call $(X_t)_{t\in [0,\infty)}$ a \emph{polynomial diffusion} if for all $i, j \in \{1, \dots, d\}$, $t \in [0,\infty)$, $x\in S$ it holds that 
	\begin{enumerate}
		\item $\mathfrak{b}^i(t, \cdot) \in \mathcal{P}_1(S)$ and $\mathfrak{b}^i(\cdot, x)\in C([0,\infty))$, and,
		\item $\mathfrak{c}^{i,j}(t, \cdot) \in \mathcal{P}_2(S)$ and $\mathfrak{c}^{ij}(\cdot , x)\in C([0,\infty))$.
	\end{enumerate}	
\end{definition}

\begin{proposition}[{\cite[Corollary 5.5]{Hurtado_Schmidt}}] \label{prop:poly_diffusion}
	A polynomial diffusion $(X_t)_{t\in[0,\infty)}$ is a polynomial process with infinitesimal generator given by 
	\begin{equation*}
		(\mathcal{H}_sf)(x)= \sum_{i=1}^d D_i f(x)\mathfrak{b}^i(s,x) + \frac{1}{2} \sum_{i,j=1}^d D_{i,j}f(x)\mathfrak{c}^{i,j}(s,x)
	\end{equation*}
	for all $f\in \mathcal{P}_m(S)$, $m\geq 0$, $s\in [0, \infty)$, $x\in S$.
\end{proposition}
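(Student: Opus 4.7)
The proposition makes two assertions: the explicit formula for the generator on polynomials, and the fact that $X$ is a polynomial process in the sense of Definition~\ref{def:transition_op}. I would treat them in that order.

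For the generator formula, the natural starting point is It\^o's formula applied to $f(X_t)$ for $f \in \mathcal{P}_m(S)$; since $f$ is smooth, this yields
\begin{equation*}
f(X_t) - f(X_s) = \int_s^t (\mathcal{L}_u f)(X_u)\, du + (N_t - N_s),
\end{equation*}
where $\mathcal{L}_u$ denotes the operator on the right-hand side of the claim and $N$ is a local martingale. The key observation is that $\mathcal{L}_u f \in \mathcal{P}_m(S)$: since $D_i f \in \mathcal{P}_{m-1}(S)$ and $D_{ij} f \in \mathcal{P}_{m-2}(S)$, while $\mathfrak{b}^i(u,\cdot) \in \mathcal{P}_1(S)$ and $\mathfrak{c}^{ij}(u,\cdot) \in \mathcal{P}_2(S)$, a simple degree count closes on $\mathcal{P}_m(S)$. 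After localising by a sequence $\tau_n \uparrow \infty$, invoking that all moments of $X$ exist (the implicit assumption noted in the remark above) together with the continuity in $u$ of $\mathfrak{b}, \mathfrak{c}$, I would condition on $X_s = x$ and pass to the limit to obtain the Dynkin-type identity
\begin{equation*}
P_{s,t} f(x) - f(x) = \int_s^t P_{s,u}(\mathcal{L}_u f)(x)\, du.
\end{equation*}
Dividing by $t-s$ and sending $t \searrow s$, together with continuity of $u \mapsto P_{s,u}(\mathcal{L}_u f)(x)$ at $u = s$, yields $\mathcal{H}_s f = \mathcal{L}_s f$.

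For the polynomial-process assertion, the idea is to convert the generator action into a finite-dimensional linear ODE. Choosing a monomial basis $\{e_k\}_{k=1}^{M}$ of $\mathcal{P}_m(\R^d)$ and writing $\mathcal{L}_s e_k = \sum_\ell A(s)_{\ell k} e_\ell$, the map $s \mapsto A(s)$ is continuous in view of the continuity in time of the coefficients of $\mathfrak{b}, \mathfrak{c}$. Iterating the Dynkin identity from step one produces a Neumann-type series which converges uniformly on compacts and gives
\begin{equation*}
P_{s,t} f(x) = \sum_{k,\ell=1}^{M} \Phi(s,t)_{k\ell}\, c_\ell\, e_k(x), \qquad f = \sum_\ell c_\ell e_\ell,
\end{equation*}
with $\Phi(\cdot, t)$ the $C^1$-fundamental solution of the matrix ODE $\partial_s \Phi(s,t) = -A(s)\Phi(s,t)$, $\Phi(t,t) = \Id$. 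This exhibits $(s,t,x) \mapsto P_{s,t}f(x)$ as an element of $\tilde{\mathcal{P}}_m(S)$, completing the verification that the transition operators are polynomial.

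The main obstacle will be justifying that the local martingale $N$ in the It\^o expansion can be replaced by a true martingale upon taking conditional expectations. Since $(\mathcal{L}_u f)(X_u)$ is polynomial in $X_u$, the required integrability reduces to a moment bound of the form $\E[\sup_{u \le T}|X_u|^k] < \infty$ for every $k$ and $T < \infty$. This is the standard moment-estimate step for polynomial diffusions and is handled via a Gronwall argument applied iteratively to $t \mapsto \E[|X_t|^k]$, exploiting the at-most-linear growth of $\mathfrak{b}$ and at-most-quadratic growth of $\mathfrak{c}$. Once this integrability is secured, the remainder of the argument is essentially bookkeeping on finite-dimensional linear ODEs.
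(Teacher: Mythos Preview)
Your proposal is correct and essentially reconstructs the argument contained in the cited reference \cite{Hurtado_Schmidt}: moment bounds via Gronwall (their Lemma~5.4), the Dynkin identity from It\^o's formula (their Theorem~5.4), and the linear-ODE representation of the transition operator on $\mathcal{P}_m$ via the Kolmogorov equations (their Theorem~3.7). The paper's own proof does not redo any of this; it simply invokes \cite[Corollary~5.5]{Hurtado_Schmidt} and then spends its effort on a point you do not discuss, namely the passage from the finite horizon $[0,T]$ used in \cite{Hurtado_Schmidt} to the full half-line $[0,\infty)$. That extension is routine---your Gronwall moment estimate and the linear ODE for $\Phi(s,t)$ work on every compact interval, and one patches---so the omission is minor, but if you want to match the paper's emphasis you should at least flag it.

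One presentational remark: in your first paragraph you invoke ``all moments of $X$ exist (the implicit assumption noted in the remark above)'' before you have proved anything, which reads as circular since that remark concerns polynomial \emph{processes}, not polynomial \emph{diffusions}. You correctly identify in your final paragraph that the moment bound is the actual technical input and must be established from the polynomial structure of $\mathfrak{b},\mathfrak{c}$; it would be cleaner to place that estimate first, then derive the Dynkin identity with a true martingale, then read off both the generator formula and the polynomial property of $P_{s,t}$.
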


\begin{proof}
The proof of this hinges on~\cite[Theorem 3.7, Theorem 5.4 and Lemma 5.4]{Hurtado_Schmidt} and the extension for these results to hold for all $t\in [0,\infty)$ deserves a brief discussion. First of all, regarding~\cite[Lemma 5.4]{Hurtado_Schmidt}, the coefficients $\tilde{C}_1, \tilde{C}_2$ now depend on time, but they remain finite over all of $[0,\infty)$ and hence~\cite[Lemma 5.4]{Hurtado_Schmidt} holds analogously on $[0, \infty)$. Moreover, Theorem~\cite[Lemma 5.3]{Hurtado_Schmidt} holds on $[0,\infty)$ by noting that the Kolmogorov forward and backward equations hold analogously. For the forward equation, see~\cite{Rueschendorf2016}. A solution for the backward equation can be constructed exactly as described in the proof of~\cite[Theorem 3.7]{Hurtado_Schmidt}; knowing a solution to the forward equation exists for all $t\in [s, \infty)$, we can apply~\cite[Lemma A.3]{Hurtado_Schmidt} and obtain a solution to the backward equation for all $s\in [0,t)$ where $t$ is arbitrary. 
\end{proof}

Let us also point out that a diffusion as defined above can always be realized (in law) via an SDE, see~\cite{Jacod_Shiryaev}. The canonical one-dimensional example of an SDE in this setting is the following:

\begin{example}\label{ex:poly_SDE}
Let us consider an SDE of the form: 
$$ dX_t = (b(t)+ \beta(t)X_t) dt + \sqrt{c(t) + \gamma(t)X_t + \Gamma(t)X^2_t}dW_t,$$
where $X_0=x$, the maps $b, \beta, c, \gamma, \Gamma: [0,\infty) \rightarrow \R$ are continuous and $(W_t)_{t\geq0}$ denotes a one-dimensional Brownian motion. Moreover, assume the term under the square-root is non-negative, $\beta, \Gamma$ are uniformly bounded and there exists a continuous and strictly increasing map $h: [0, \infty ) \rightarrow [0, \infty)$ such that $b, c ,\gamma$ are all dominated by $h$. Then there exists a path-wise unique strong solution $(X_t)_{t\geq 0}$ to this SDE, by Theorem~\ref{thm:sol_MP} and Proposition~\ref{prop:pmp}. Moreover, looking at the differential characteristics
\begin{align*}
\mathfrak{b}(t, x) &= b(t)+ \beta(t)x \\
\mathfrak{c}(t, x) &= c(t) + \gamma(t)x + \Gamma(t)x^2
\end{align*}
$(X_t)_{t\geq0}$ is a time-inhomogeneous polynomial process, in particular a polynomial diffusion, by Definition~\ref{def:poly_diffusion}.
\end{example}

An important property of time-inhomogeneous polynomial processes is that we have an analytic expression for the conditional moments of the process. Before we state this so-called \emph{moment-formula}, let us make explicit that the transition operators and infinitesimal generators of polynomial processes act \emph{linearly} on polynomials and therefore admit a matrix representation with respect to a polynomial basis. 

To this end, let us consider polynomials up to an arbitrary but fixed degree $m>0$. Moreover, we denote by $(v_0, \dots, v_N)$  a basis such that $\mathcal{P}_m(S) = \textrm{span}\{ (v_0, \dots, v_{N(m)} \}$, where $N(m)$ is the dimension of the degree-$m$ polynomials on $S$. For a linear operator $A: \mathcal{P}_m(S) \rightarrow \mathcal{P}_m(S)$ we denote its matrix representation by $(\mathbf{A}_{ij})_{0\leq i,j \leq N(m)}$ such that for all $i \in \{0, \dots, N(m)\}$ 
\begin{equation}\label{eq:transform_operator}
    Av_i = \sum_{j=0}^{N(m)} \mathbf{A}_{ji} v_j.
\end{equation}

Indeed, for polynomial processes and for fixed time-points $s \in [0,\infty)$ (resp.~$(s,t)\in \Delta$) the operators $\mathcal{H}_s$ (resp.~$P_{s,t}$) act linearly on the space of polynomials. Therefore, we associate with $\mathcal{H}_s$ a matrix $\mathbf{H}_s$ for each $s\in [0,\infty)$ and likewise with $P_{s,t}$ a matrix $\mathbf{P}_{s,t}$ for each $(s,t) \in \Delta$. We do want to emphasize that these matrices are time-dependent. Let the following example illustrate the structure of the matrix $\mathbf{P}_{s,t}$:

\begin{example}\label{ex:P_op}
Let us denote by $(\mathbf{P}_{s,t})_{:, i}:= \mathbf{P}_{s,t}\cdot e_i$ the $i$-th column of $\mathbf{P}_{s,t}$. Moreover, let us consider $S=\R$ and choose the canonical basis $v_i= (x \mapsto x^i)$ for all $i\in \{0, \dots, N\}$. 
Then, by definition, it holds that 
\begin{equation*}
\E[X_t^i\lvert X_s=x] = \E[v_i(X_t) \lvert X_s=x] = (1, x, \dots, x^N)\cdot (\mathbf{P}_{s,t})_{:, i}.
\end{equation*}
Hence, the columns of $\mathbf{P}_{s,t}$ contain the \emph{coefficients} of the polynomials to obtain the $i$-th moment. 
\end{example}

We are now ready to state the moment formula: 
\begin{theorem}[Moment Formula]\label{thm:moment_formula}
	For a polynomial process $(X_t)_{t\in [0,T]}$ it holds for any $f\in \mathcal{P}_m(S)$ where $f= \sum_{i=0}^{N(m)} u_i v_i$ that 
	\begin{equation*}
		\E[f(X_t)  \lvert \mathcal{F}_s] = \sum_{i,j=0}^{N(m)} v_i(X_s) \big(e^{\mathbf{\Omega}(s,t)}\big)_{ij} u_j
	\end{equation*}
	for $u\in\R^{N(m)}$ and where $\mathbf{\Omega}(s,t)= \sum_{k=0}^{\infty} \mathbf{\Omega}_k(s,t)$ is given by the Magnus expansion of $(\mathbf{H}_s)_{s\in[0,T]}$ assuming we choose a basis such that $\int_0^T \| \mathbf{H}_s\|_2 ds < \pi$ for the spectral norm $\| \cdot \|_2$.\footnote{Note that the spectral norm depends on the choice of the basis. Let a matrix $\mathbf{A}$ have spectral norm $\alpha:=\| \mathbf{A}\|_2$ under basis $(v_0, \dots, v_N)$, then it has spectral norm $\epsilon \alpha$ under the re-scaled basis $(\epsilon^{-1}v_0, \dots, \epsilon^{-1}v_N)$ for any $\epsilon>0$. See~\cite[Section 7 \& 9]{Hurtado_Schmidt} for more details.} In particular - denoting by $[\cdot, \cdot ]$ the commutator of two matrices, i.e. $[\mathbf{A}, \mathbf{B}]= \mathbf{A}\mathbf{B} - \mathbf{B}\mathbf{A}$ - the first three elements of the Magnus expansion are given by
	$$ \mathbf{\Omega}_1(s,t) = \int_s^t \mathbf{H}_r dr $$
	$$ \mathbf{\Omega}_2(s,t) = -\frac{1}{2} \int_s^t \int_s^{t_1} [\mathbf{H}_{t_1}, \mathbf{H}_{t_2}]   dt_2 dt_1$$
	$$ \mathbf{\Omega}_3(t) = \frac{1}{6} \int_s^t \int_s^{t_1} \int_s^{t_2} [\mathbf{H}_{t_1}, [\mathbf{H}_{t_2}, \mathbf{H}_{t_3}]] + [\mathbf{H}_{t_3}, [\mathbf{H}_{t_2}, \mathbf{H}_{t_1}]] dt_3 dt_2 dt_1 .$$
	Hence, if $[\mathbf{H}_s, \mathbf{H}_t]=0$ for all $s,t\geq 0$, then the above simplifies to 
	\begin{equation*}
		\E[f(X_t)  \lvert \mathcal{F}_s] = \sum_{i,j=0}^{N(m)} v_i(X_s)\big(e^{\int_s^t\mathbf{H}_rdr}\big)_{ij}\cdot u_j.
	\end{equation*}
	
\end{theorem}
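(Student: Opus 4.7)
The plan is to combine the Markov property with the finite-dimensional matrix representation of $P_{s,t}$ on $\mathcal{P}_m(S)$ and then invoke the classical Magnus expansion for the resulting linear matrix ODE.

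First, by the Markov property one has $\E[f(X_t)\,|\,\mathcal{F}_s] = (P_{s,t}f)(X_s)$. Since $f = \sum_j u_j v_j \in \mathcal{P}_m(S)$ and, by the polynomial property, $P_{s,t}$ preserves $\mathcal{P}_m(S)$, the matrix representation convention~\eqref{eq:transform_operator} yields
\begin{equation*}
(P_{s,t}f)(x) = \sum_{i,j=0}^{N(m)} v_i(x)\,(\mathbf{P}_{s,t})_{ij}\, u_j.
\end{equation*}
Substituting $x = X_s$ reduces the statement to proving the matrix identity $\mathbf{P}_{s,t} = e^{\mathbf{\Omega}(s,t)}$ for the family $(s,t)\mapsto \mathbf{P}_{s,t}$.

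Second, I would derive a linear matrix ODE for $\mathbf{P}_{s,t}$. Starting from the Chapman--Kolmogorov equation $P_{s,t+h} = P_{s,t} P_{t,t+h}$ and Definition~\ref{def:infi_generator}, dividing by $h$ and letting $h \searrow 0$ produces the forward equation $\partial_t P_{s,t} f = P_{s,t}\mathcal{H}_t f$ on $\mathcal{P}_m(S)$. Because $\mathcal{P}_m(S)$ is finite-dimensional and $s \mapsto \mathbf{H}_s$ is continuous, the convergence of the difference quotient is uniform in the coefficient vector, so the passage to matrix form is rigorous and reads
\begin{equation*}
\partial_t \mathbf{P}_{s,t} = \mathbf{P}_{s,t}\mathbf{H}_t, \qquad \mathbf{P}_{s,s} = I.
\end{equation*}

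Third, I would solve this ODE via the Magnus expansion. The classical convergence theory asserts that under the sufficient condition $\int_s^t \|\mathbf{H}_r\|_2\,dr < \pi$ in the chosen basis, the series $\mathbf{\Omega}(s,t) = \sum_{k\geq 1}\mathbf{\Omega}_k(s,t)$ converges absolutely in operator norm, its exponential $e^{\mathbf{\Omega}(s,t)}$ solves the ODE above, and the first terms are precisely the stated nested-commutator integrals $\mathbf{\Omega}_1,\mathbf{\Omega}_2,\mathbf{\Omega}_3$ (with the sign convention appropriate for right multiplication $\mathbf{P}_{s,t}\mathbf{H}_t$). The basis-dependence in the footnote reflects the fact that rescaling $v_i \mapsto \varepsilon^{-1}v_i$ rescales $\mathbf{H}_s$ by $\varepsilon$, so the $\pi$-threshold is always attainable by a suitable choice of basis. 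The commutative simplification is immediate: if $[\mathbf{H}_s,\mathbf{H}_t]=0$ for all $s,t$, every $\mathbf{\Omega}_k$ with $k\geq 2$ involves a vanishing commutator, leaving $\mathbf{\Omega}(s,t) = \int_s^t \mathbf{H}_r\,dr$. The main obstacle is the convergence of the Magnus series, which is the one genuinely nontrivial ingredient and is the reason the hypothesis on $\int_0^T\|\mathbf{H}_s\|_2\,ds$ appears; all remaining steps amount to translating between the abstract action of $P_{s,t}$ on $\mathcal{P}_m(S)$ and its matrix representation.
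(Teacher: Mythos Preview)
Your proposal is correct and follows essentially the same route as the paper: the paper's proof simply invokes Lemma~\ref{lem:intro_fwd_bwd} (the Kolmogorov forward/backward equations $\partial_t\mathbf{P}_{s,t}=\mathbf{P}_{s,t}\mathbf{H}_t$, $\partial_s\mathbf{P}_{s,t}=-\mathbf{H}_s\mathbf{P}_{s,t}$) and then appeals to ``standard ODE theory'' (i.e., the Magnus expansion) and~\cite[Section~7]{Hurtado_Schmidt}, which is exactly the sequence you spell out in more detail. One small remark: your aside that a uniform rescaling $v_i\mapsto\varepsilon^{-1}v_i$ rescales $\mathbf{H}_s$ by $\varepsilon$ is not quite right (a uniform rescaling leaves the matrix representation unchanged); to reduce the spectral norm one needs a \emph{non-uniform} rescaling of the basis, but this is a side comment and does not affect the validity of your argument.
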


\begin{proof}%[Proof of Theorem~\ref{thm:moment_formula}]
	Using Lemma~\ref{lem:intro_fwd_bwd} below, the result follows from standard ODE theory. We refer the reader to~\cite[Section 7]{Hurtado_Schmidt} for more details.
\end{proof}

\begin{remark}
In the time-homogeneous case, the infinitesimal generator in constant in time, i.e. $\mathbf{H}_s \equiv \mathbf{H}$ for all $s\geq 0$. Hence, the moment formula simplifies to  \begin{equation*}
		\E[f(X_t)  \lvert \mathcal{F}_s] = \sum_{i,j=0}^{N(m)} v_i(X_s)\big(e^{(t-s)\mathbf{H}}\big)_{ij}\cdot u_j.
	\end{equation*}

\end{remark}

\begin{lemma}\label{lem:intro_fwd_bwd}
	For a polynomial process $(X_t)_{t\in [0,T]}$ its transition operator and infinitesimal generator are described by the Kolmogorov forward and backward ODEs. We denote by $\frac{d^+}{dt}$ the \emph{right-hand-derivative}, that is $\frac{d^+}{dt} f(t) := \lim_{h\searrow 0} \frac{f(t+h)- f(t)}{h}$ .
	\begin{enumerate}
		\item for every fixed $t\in [0,\infty)$ it holds on $[0, t)$ that $$\frac{d^+}{ds}\mathbf{P}_{s,t} = -\mathbf{H}_s\mathbf{P}_{s,t} \textrm{ with } \mathbf{P}_{t,t}= id$$
		\item for every fixed $s\in [0,\infty)$ it holds on $[s, \infty)$ that $$\frac{d^+}{dt}\mathbf{P}_{s,t} = \mathbf{P}_{s,t}\mathbf{H}_t \textrm{ with } \mathbf{P}_{s,s}= id.$$
	\end{enumerate}
\end{lemma}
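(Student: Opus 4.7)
The plan is to derive both equations directly from the Chapman--Kolmogorov identity combined with the definition of the infinitesimal generator, reducing the whole argument to matrix analysis on the finite-dimensional space $\mathcal{P}_m(S)$. Fix $m \geq 0$ and a basis $(v_0,\dots,v_{N(m)})$ of $\mathcal{P}_m(S)$. The polynomial property ensures that $P_{s,t}$ and $\mathcal{H}_s$ restrict to linear endomorphisms of $\mathcal{P}_m(S)$ and are therefore faithfully represented by matrices $\mathbf{P}_{s,t}$ and $\mathbf{H}_s$ as in~\eqref{eq:transform_operator}. Since $(s,t,x) \mapsto P_{s,t}v_i(x)$ lies in $\tilde{\mathcal{P}}_m(S)$, the entries of $\mathbf{P}_{s,t}$ are continuously differentiable in $(s,t)$, and the entries of $\mathbf{H}_s$ -- obtained by right-differentiating in $t$ at $t=s$ -- are continuous in $s$.

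For the forward equation, fix $s$. Chapman--Kolmogorov gives $P_{s,t+h} = P_{s,t} P_{t,t+h}$ for $h \geq 0$, so passing to matrices,
\begin{equation*}
\frac{\mathbf{P}_{s,t+h} - \mathbf{P}_{s,t}}{h} \;=\; \mathbf{P}_{s,t}\,\frac{\mathbf{P}_{t,t+h} - \Id}{h},
\end{equation*}
and the right-hand factor converges to $\mathbf{H}_t$ as $h \searrow 0$ by Definition~\ref{def:infi_generator} applied componentwise to the basis. This yields $\frac{d^+}{dt}\mathbf{P}_{s,t} = \mathbf{P}_{s,t}\mathbf{H}_t$ with $\mathbf{P}_{s,s}=\Id$. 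For the backward equation I fix $t$ and apply the other half of Chapman--Kolmogorov, $P_{s,t} = P_{s,s+h}P_{s+h,t}$, to get
\begin{equation*}
\frac{\mathbf{P}_{s+h,t} - \mathbf{P}_{s,t}}{h} \;=\; -\,\frac{\mathbf{P}_{s,s+h} - \Id}{h}\,\mathbf{P}_{s+h,t}.
\end{equation*}
The first factor tends to $-\mathbf{H}_s$, while joint continuity of $(s,t) \mapsto \mathbf{P}_{s,t}$ on any compact subset of $\Delta$ gives $\mathbf{P}_{s+h,t}\to \mathbf{P}_{s,t}$; hence $\frac{d^+}{ds}\mathbf{P}_{s,t} = -\mathbf{H}_s\mathbf{P}_{s,t}$ with $\mathbf{P}_{t,t}=\Id$.

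The only step that deserves attention is the extension from the finite horizon $[0,T]$ of~\cite{Hurtado_Schmidt} to $[0,\infty)$, which I expect to be the main, though essentially cosmetic, obstacle. Both identities above only involve $(s,t)$ in some bounded subinterval, and all the regularity invoked -- continuity of $s \mapsto \mathbf{H}_s$ and joint continuity of $\mathbf{P}_{s,t}$ -- is inherited from the polynomial property on any compact time window. Hence the finite-horizon proof of~\cite[Section 7]{Hurtado_Schmidt} applies verbatim on every $[0,T]$, and by the arbitrariness of $T$ the forward and backward equations hold on $[0,\infty)$.
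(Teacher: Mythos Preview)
Your proof is correct and follows exactly the approach the paper indicates: deriving both equations from the Chapman--Kolmogorov identity together with the definition of the infinitesimal generator, then noting that the extension to $[0,\infty)$ is immediate since the argument is local in time. The paper merely cites \cite[Proposition 7.1]{Hurtado_Schmidt} and \cite{Rueschendorf2016} for these computations, whereas you have spelled them out explicitly.
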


\begin{proof}
	This follows essentially from the definition of the infinitesimal generator and the Chapman-Kolmogorov equation. See~\cite[Propostion 7.1]{Hurtado_Schmidt} and~\cite{Rueschendorf2016}. See also our comments in the proof of Proposition~\ref{prop:poly_diffusion} regarding the extension to $[0, \infty)$.
\end{proof}

\subsection{Existence and uniqueness of solutions to time-inhomogeneous diffusions}\label{subsec:ex_uniq_MP}

In this section, we present the approach to establish existence of weak solutions to SDEs via existence of solutions to the \emph{martingale problem}. First, we briefly outline the concept behind this approach and refer the reader to~\cite[Chapter 4 and Chapter 5.3]{EK} for more details. In Proposition~\ref{prop:pmp} and Theorem~\ref{thm:sol_MP} we present sufficient conditions on the coefficients of the SDE for a solution to the martingale problem to exist in a time-inhomogeneous setting. While Proposition~\ref{prop:pmp} follows the same reasoning as in the time-homogeneous setting outlined e.g., in~\cite{script_Larsson}, Theorem~\ref{thm:sol_MP} is an extension of~\cite[Proposition 5.3.5 and Theorem 5.3.10]{EK} allowing for milder growth-conditions of the time-dependencies. Let us start  by the definition of a solution to the martingale problem (see~\cite[Chapter 4.3]{EK} for more details). 

\begin{definition}[Solution to the Martingale Problem]
Let $(X_t)_{t\geq 0}$ be a stochastic process with state-space $S\subseteq \R^d$ on some filtered probability space and let $A: \mathcal{D}(A)\subseteq C_b(S) \rightarrow C_b(S)$ 
be a linear operator for $C_b(\dot)$ denoting the set of bound continuous functions. We call $(X_t)_{t\geq 0}$ a \emph{solution to the martingale problem for} $A$ if 
$$ f(X_t) - \int_0^t (Af)(X_s)ds$$
is a martingale for all $f\in \mathcal{D}(A)$.
\end{definition}

For our purpose, we are interested in the solution to martingale problems for a given (possibly time-extended) \emph{generator of an SDE} $\mathcal{H}^0$ with $\mathcal{D}(\mathcal{H}^0)= C^{\infty}_c(S) $ i.e. the smooth functions of compact support on the state space $S$.

Let us from now on consider a one-dimensional SDE of the form 
\begin{equation}
dX_t = B(t, X_t) dt + \Sigma(t, X_t) dW_t
\end{equation}
for continuous functions $B, \sigma: [0,\infty) \times S \rightarrow \R$, $S\subseteq \R$, some given initial distribution for $X_0$ and $(W_t)_{t\geq0}$ denoting a one-dimensional Brownian motion. We can associate with this SDE a family of \emph{time-dependent} generators $(\mathcal{H}_s)_{s\geq 0}$ acting on $f\in C_c^{\infty}(S)$ as 
$$(\mathcal{H}_sf)(x) = B(s, x)f'(x) + \frac{1}{2} \Sigma^2(s,x) f''(x)$$
for all $s\geq 0$ and $x\in S$. We refer the reader to~\cite[Propsition 5.3.1 and Lemma 5.3.2]{EK} on how to construct a weak solution of an SDE given a solution to the martingale problem for the generator of the SDE. For brevity, we merely note here that a weak solution to an SDE can be constructed, if a solution to the martingale problem of its generator exists.

Let us point out, that in the time-inhomogeneous case, we need to study the time-extended martingale problem to establish existence of a solution, see~\cite[Theorem 4.7.1 and Proposition 5.3.5]{EK}. For an SDE with time-dependent coefficients and associated generators $(\mathcal{H}_s)_{s\geq0}$ we define the generator of the time-extended process $(t, X_t)_{t\geq0}$ as 
\begin{align*}
    \mathcal{H}^0 : \{\tau f \,  \lvert \, (\tau, f) \in C_c^1([0,\infty)) \times C_c^{\infty}(S)\} &\rightarrow C([0,\infty) \times S)\\
    \tau f &\mapsto \mathcal{H}^0\tau f
\end{align*}
such that 
\begin{align*}
\mathcal{H}^0\tau f : [0,\infty) \times S &\rightarrow \R\\
(t,x) &\mapsto \tau(t) \mathcal{H}_t f(x) + \tau'(t) f(x).
\end{align*}
As in \cite[Proposition 5.3.5]{EK} we study the existence of the solution fully in the time-extended setting first and then project the solution down to the space-component. 

Establishing the existence of a solution $(t, X_t)_{t\geq0}$ to the martingale problem for $\mathcal{H}^0$ can be broken down to the following two essential steps (see~\cite[Proposition 5.3.5 and Theorem 5.3.10]{EK}):
\begin{enumerate}
    \item showing that $\mathcal{H}^0$ satisfies the \emph{positive maximum principle}\footnote{We will introduce the positive maximum principle in Definition~\ref{def:pmp}.} (see~\cite[Theorem 4.5.4]{EK}) 
    \item verifying that the solution $(t,X_t)_{t\geq 0}$ has continuous sample paths with values in $[0, \infty)\times S$ (see~\cite[Proposition 3.9]{EK})
\end{enumerate}

Clearly, both of the above steps will impose some conditions on the generator $\mathcal{H}^0$ which translate to conditions on the SDE's coefficients $B$ and $\sigma$. Let now us tackle step (i):

\begin{definition}[Positive Maximum Principle, {see~\cite[p. 165]{EK}}]\label{def:pmp}
We say a linear operator $A$ satisfies the \emph{positive maximum principle (PMP)} on $S$ if for $x_0\in S$ such that $f(x_0)= \sup_{x\in S} f(x)$ for $f\in \mathcal{D}(A)$ it holds that $(Af)(x_0)\leq 0$.
\end{definition}

We now establish conditions on the coefficients $B, \Sigma$ for the positive maximum principle to be fulfilled on a given state-space $S\subseteq \R$ in the next proposition. Although we consider the time-inhomogeneous setting here, the reasoning is very similar to the time-homogeneous case outlined in~\cite[Examples 4.15, 4.16 and 4.18]{script_Larsson}. 

\begin{proposition}\label{prop:pmp}
Let 
$$dX_t = B(t, X_t) dt + \Sigma(t, X_t) dW_t$$
and denote by $\mathcal{H}^0$ the associated time-extended generator.
\begin{enumerate}
\item For $S=\R$, the generator $\mathcal{H}^0$ satisfies the PMP without further assumptions as $\Sigma^2(\cdot, \cdot)\geq 0$ on all of $[0,\infty) \times \R$. 

\item For $S=\R_{\geq 0}$, $\mathcal{H}^0$ satisfies the PMP, if $\Sigma^2(\cdot, 0)\equiv 0$ and $B(\cdot, 0)\geq 0$ for $x=0$

\item For $S=[0,1]$, $\mathcal{H}^0$ satisfies the PMP,  if $\Sigma^2(\cdot, 0) \equiv \Sigma^2(\cdot, 1) \equiv 0$ and the drift term fulfills 
$B(\cdot, 0)\geq 0$, $B(\cdot, 1)\leq 0$. 
\end{enumerate}
\end{proposition}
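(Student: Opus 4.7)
The plan is to verify the PMP directly from first- and second-order conditions at a maximizer. Take any test function $g(t,x) := \tau(t) f(x)$ with $\tau \in C_c^1([0,\infty))$ and $f \in C_c^\infty(S)$, so that $g$ has compact support and therefore attains its supremum on $[0,\infty) \times S$ at some point $(t_0, x_0)$ with $g(t_0, x_0) \geq 0$ (the only non-trivial case). Separately maximizing $t \mapsto g(t, x_0)$ on $[0,\infty)$ yields $\tau'(t_0) f(x_0) \leq 0$ (equality if $t_0 > 0$, inequality from the boundary if $t_0 = 0$). Separately maximizing $x \mapsto g(t_0, x)$ on $S$ produces the spatial optimality conditions, which depend on where $x_0$ lies in $S$. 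The computation to check is
\[
(\mathcal{H}^0 g)(t_0, x_0) = B(t_0, x_0)\bigl[\tau(t_0) f'(x_0)\bigr] + \tfrac{1}{2} \Sigma^2(t_0, x_0) \bigl[\tau(t_0) f''(x_0)\bigr] + \tau'(t_0) f(x_0) \leq 0.
\]

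For case (i), $S = \R$ means $x_0$ is always interior, so $\tau(t_0) f'(x_0) = 0$ and $\tau(t_0) f''(x_0) \leq 0$. The first term vanishes, the second is the product of the non-negative $\Sigma^2$ with a non-positive quantity, and the third is non-positive by the time optimality. Summing, $(\mathcal{H}^0 g)(t_0,x_0) \leq 0$ without any further assumptions. For cases (ii) and (iii), the interior sub-case $x_0 \in \interior(S)$ reduces to (i), so only the boundaries $x_0 \in \{0\}$ or $x_0 \in \{1\}$ need treatment.

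At the left boundary $x_0 = 0$, the spatial maximum gives $\tau(t_0) f'(0) \leq 0$ (right-derivative test into the interior), while $\tau(t_0) f''(0)$ has unconstrained sign. To force the diffusion term to be non-positive regardless of the sign of $\tau(t_0) f''(0)$, one must require $\Sigma^2(\cdot, 0) \equiv 0$; and for the drift term $B(t_0, 0) \cdot (\text{non-positive})$ to be non-positive one needs $B(\cdot, 0) \geq 0$. Together with the already controlled $\tau'(t_0) f(x_0)$ term, this establishes (ii) and the $x_0 = 0$ subcase of (iii). At the right boundary $x_0 = 1$ the left-derivative test reverses the sign, giving $\tau(t_0) f'(1) \geq 0$; the identical reasoning forces $\Sigma^2(\cdot, 1) \equiv 0$ and $B(\cdot, 1) \leq 0$, completing (iii).

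The main subtlety -- and the only real step to get right -- is that one may \emph{not} assume $\tau(t_0) \geq 0$ when reading off the boundary inequalities, since test functions in $C_c^1([0,\infty))$ can change sign freely. The clean fix is to keep the products $\tau(t_0) f'(x_0)$, $\tau(t_0) f''(x_0)$ together throughout, since these (rather than $f'(x_0)$ and $f''(x_0)$ alone) are what the one-dimensional maximization of $x \mapsto \tau(t_0) f(x)$ actually controls. One should also note the degenerate case $\tau(t_0) = 0$: then $g(t_0, x_0) = 0$, the bulk terms vanish, and only $\tau'(t_0) f(x_0)$ survives, which is non-positive by the time optimality condition. With this organization the argument is a routine case analysis with no global estimates needed.
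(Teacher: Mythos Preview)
Your proof is correct and follows essentially the same approach as the paper's: both pick a maximizer $(t_0,x_0)$ of $\tau(t)f(x)$, derive the first- and second-order optimality conditions for the products $\tau(t_0)f'(x_0)$, $\tau(t_0)f''(x_0)$, and $\tau'(t_0)f(x_0)$ separately in the interior and at the boundaries of $[0,\infty)\times S$, and then read off the sign conditions on $B$ and $\Sigma^2$ needed at $\partial S$. Your added remarks about keeping the products together and the degenerate case $\tau(t_0)=0$ are careful touches, but do not constitute a different route.
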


\begin{proof}
Let $(t_0,x_0) \in [0, \infty) \times S$ such that $\tau(t_0)f(x_0)= \sup_{(t, x) \in [0, \infty) \times S} \tau(t)f(x)$. We note that $\tau(t_0)f(x_0)\geq 0$ since the functions have compact support on $[0, \infty) \times S$. Denoting by $S^{\circ}$ the interior of $S$ it clearly holds that for $(t_0, x_0) \in (0, \infty) \times S^{\circ}$
$$ \tau'(t_0)f(x_0)=0, \quad \tau(t_0)f'(x_0) =0, \quad \tau(t_0)f''(x_0) \leq 0.$$ 
Hence, since $\Sigma^2(\cdot, \cdot) \geq 0$ it is straightforward to check that $\mathcal{H}^0$ satisfies the PMP on $(0,\infty) \times S^{\circ}$. Moreover, if $t_0=0$, it holds that $$\tau'(t_0)f(x_0) \leq 0, \quad \tau(t_0)f'(x_0) =0, \quad \tau(t_0)f''(x_0) \leq 0$$
and it is straightforward to check that $\mathcal{H}^0$ therefore fulfills the PMP on $[0, \infty) \times S^{\circ}$. We are now left to check the cases where $S$ contains some boundary points, i.e.~$\partial S \cap S \neq \emptyset$. This case is relevant for $S=\R_{\geq 0}$ and $S=[0,1]$ and yields the additional conditions. If $x_0 \in \partial S \cap S$ and $t_0 \in [0, \infty)$, it holds that 
$$ \begin{cases}
    \tau(t_0)f'(x_0) \leq 0 \quad &\textrm{if } x_0 = \inf{x\in S}\\
    \tau(t_0)f'(x_0) \geq 0 \quad &\textrm{if } x_0 = \sup{x\in S}.\\
\end{cases}$$
Note that we can not say anything about the second order derivative of $f$ at the boundary and therefore require $\Sigma^2$ to vanish on boundary points of $S$. It is easy to check that the conditions on $B$ at the boundary points are exactly what we need for the PMP to be satisfied. 
\end{proof}

Having taken care of (i), we address step (ii) in the following theorem: 

\begin{theorem}\label{thm:sol_MP}
Consider a time-inhomogeneous SDE of the form
$$dX_t = B(t, X_t) dt + \Sigma(t, X_t) dW_t,$$
with state-space $S=\R$, $S= \R_{\geq 0}$ or $S=[0,1]$ and  $B, \Sigma: [0,\infty) \times S \rightarrow \R$ continuous such that the infinitesimal generator $\mathcal{H}^0$ satisfies the positive maximum principle on the corresponding state space. Then there exists a  solution to the corresponding martingale problem, if there is a map $h(t):[0, \infty) \rightarrow [0,\infty)$ which is continuous and strictly increasing s.t. 
\begin{equation}\label{eq:sigma_growth}
\Sigma^2(t,x) \leq K^{\Sigma}(1+ h^2(t)+ h(t)x+ x^2) 
\end{equation}
and 
\begin{equation}\label{eq:xb_growth}
x\cdot B(t,x) \leq K^{B}(1+ h(t)x+ x^2)
\end{equation}
for all $x\in S$ and $t\in [0, \infty)$ and some constants $K^{\Sigma}, K^{B}\geq 0$. Moreover, this solution is path-wise unique if $\Sigma^2(t,x), B(t,x)$ are locally Lipschitz in $x$ uniformly in $t$ on bounded intervals, that is: for any open subset $U \subset S$ and $T>0$ there is a constant $K$ such that 
$$\lvert \Sigma^2(t, x)- \Sigma^2(t,y) \rvert \vee \lvert B(t,x) - B(t,y) \rvert \leq K \lvert x-y\rvert$$
for all $x,y\in U$ and $t\in [0,T]$.
\end{theorem}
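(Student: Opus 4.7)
The plan is to verify the two sufficient conditions of~\cite[Proposition 5.3.5 and Theorem 5.3.10]{EK} for existence of a solution to the martingale problem for the time-extended generator $\mathcal{H}^0$: the positive maximum principle, which Proposition~\ref{prop:pmp} already supplies on each of the three allowed state spaces, and a compact containment condition for an approximating family of solutions. The main work therefore lies in verifying compact containment under the weaker growth hypotheses~\eqref{eq:sigma_growth}--\eqref{eq:xb_growth}.

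The core of the argument is a Lyapunov estimate with $V(x) := 1 + x^2$. Using $\mathcal{H}_t V(x) = 2x B(t,x) + \Sigma^2(t,x)$, combining~\eqref{eq:xb_growth} and~\eqref{eq:sigma_growth} with Young's inequality $h(t)x \leq \tfrac{1}{2}(h^2(t)+x^2)$ should yield
\begin{equation*}
    \mathcal{H}_t V(x) \;\leq\; C_1 V(x) + C_2\bigl(1 + h^2(t)\bigr)
\end{equation*}
for constants $C_1,C_2$ depending only on $K^\Sigma, K^B$. The plan is then to truncate the coefficients on balls of radius $n$ so that classical theory delivers solutions $X^n$ to the truncated SDEs, apply Itô's formula to $V(X^n_{t\wedge\tau_n})$ with $\tau_n := \inf\{t : |X^n_t|\geq n\}$, and invoke Grönwall to obtain
\begin{equation*}
    \sup_{t\leq T}\E\bigl[V(X^n_{t\wedge\tau_n})\bigr] \;\leq\; e^{C_1 T}\Bigl(V(X_0) + C_2\int_0^T \bigl(1+h^2(s)\bigr)\,ds\Bigr),
\end{equation*}
which is finite because $h$ is continuous on $[0,T]$. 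This furnishes the required uniform moment bound and hence compact containment of $\{X^n\}$ in $[0,T]\times S$ for every $T$ (the case $S=[0,1]$ being automatic). A standard weak-convergence argument then extracts a solution to the martingale problem for $\mathcal{H}^0$, and projecting onto the spatial coordinate yields the desired solution; the PMP of Proposition~\ref{prop:pmp} ensures the limit lives in $[0,\infty)\times S$.

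For pathwise uniqueness I would employ a localization argument: given two strong solutions $X,\widetilde X$ on a common filtered space with the same Brownian driver and identical initial data, set $\tau_n := \inf\{t: |X_t| \vee |\widetilde X_t| \geq n\}$ and apply the local Lipschitz constant $K=K(n,T)$ on the open set $U_n := (-n,n)\cap S$ uniformly for $t\in[0,T]$. A Grönwall estimate on $\E[(X_{t\wedge\tau_n}-\widetilde X_{t\wedge\tau_n})^2]$ via Itô isometry then gives agreement up to $\tau_n$; the compact containment from the existence part forces $\tau_n\to\infty$ a.s., so $X\equiv\widetilde X$.

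The principal obstacle is the time-dependent growth: the formulation in~\cite[Theorem 5.3.10]{EK} requires linear growth uniformly in $t$, whereas~\eqref{eq:sigma_growth} allows arbitrary blow-up of the time-only term $h(t)$. The key observation that makes the proof close is that the choice $V(x)=1+x^2$ absorbs the mixed term $h(t)x$ into $C_1 V(x)$ and leaves only the time-only forcing $h^2(t)$ on the right-hand side, which is locally integrable by continuity of $h$. This is precisely why the hypothesis is formulated with a continuous $h$ rather than a mere measurable bound, and it is the point where the Grönwall step depends critically on the specific form of the growth condition.
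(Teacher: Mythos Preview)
For existence, your Lyapunov route is a genuine alternative to the paper's argument. The paper stays inside the Ethier--Kurtz framework and modifies only the test functions of \cite[Theorem 5.3.10]{EK}: it sets $f_n(t,x)=\varphi(t/n)\,\varphi\!\bigl(x^2/h^2(n)\bigr)$, so the spatial cutoff sits at radius $h(n)$ rather than $n$, and then verifies $\inf_n\inf_{(t,x)}(\mathcal H^0)^{\Delta}f_n>-\infty$ directly from \eqref{eq:sigma_growth}--\eqref{eq:xb_growth}. Your approach via $V(x)=1+x^2$ and a Gr\"onwall moment bound is more hands-on and makes the role of $h$ equally transparent; the paper's version has the advantage that invariance of $S=\R_{\ge 0}$ or $S=[0,1]$ comes for free from the PMP inside the EK machinery, whereas in your truncation step you still owe an argument that the truncated SDEs respect the boundary (so that the approximants $X^n$ live in $S$ before you pass to the limit).

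Your pathwise-uniqueness argument, however, has a real gap. The hypothesis is that $\Sigma^2$, not $\Sigma$, is locally Lipschitz. From $|\Sigma^2(t,x)-\Sigma^2(t,y)|\le K|x-y|$ one gets only $|\Sigma(t,x)-\Sigma(t,y)|^2\le K|x-y|$, i.e.\ a H\"older-$\tfrac12$ bound on $\Sigma$ (think of the square-root diffusion). The It\^o-isometry term in your estimate therefore produces $K\int_0^t \E|X_s-\widetilde X_s|\,ds$ rather than $K\int_0^t \E|X_s-\widetilde X_s|^2\,ds$, and the Gr\"onwall loop does not close. The correct one-dimensional argument is the Yamada--Watanabe local-time technique, which is exactly what the paper invokes via \cite[Theorem 5.3.8 and Remark 5.3.9]{EK}; the condition $\int_{0+}du/\rho(u)=\infty$ with $\rho(u)=Ku$ is precisely what the Lipschitz bound on $\Sigma^2$ delivers.
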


\section{Polynomial McKean-Vlasov equations}\label{sec:Poly}

In this section, we study sufficient conditions for existence and uniqueness of solutions to one-dimensional McKean-Vlasov SDEs. In particular, we are concerned with studying McKean-Vlasov SDEs which are of the following form:

\begin{definition}[Polynomial McKean-Vlasov SDEs]
Let $(W_t)_{t\in [0,\infty)}$ be a one-dimensional Brownian motion. We call an SDE a \emph{polynomial McKean-Vlasov SDE} if it is given by
\begin{align}\label{eq:poly_MV_SDE}
dZ_t &= \left(b(\E[Z_t],\dots, \E[Z_t^N]) + \beta(\E[Z_t],\dots, \E[Z_t^N])Z_t \right)dt \\ \nonumber &\quad+ \sqrt{c(\E[Z_t],\dots, \E[Z_t^N])+ \gamma(\E[Z_t],\dots, \E[Z_t^N])Z_t+\Gamma(\E[Z_t],\dots, \E[Z_t^N])Z_t^2}dW_t
\end{align}
for some $N\geq1$, continuous maps $b,\,\beta, \,c, \,\gamma, \,\Gamma: \R^N \rightarrow \R$ and $Z_0$ with $\E[Z_0], \dots, \E[Z_0^N] < \infty$. 
\end{definition}

For now the SDE in~\eqref{eq:poly_MV_SDE} is only formal, since we have not established the existence of solutions, which is the main goal of this section. We shall distinguish  several state-spaces for $(Z_t)_{t\geq 0}$ which will lead to certain restrictions on the  maps $b, \beta, c , \gamma, \Gamma$. One obvious condition is that the term under the square-root must be positive, as we want the solution to be real-valued. We discuss possible state spaces in more detail in Theorem~\ref{thm:poly_MV}, Remark~\ref{rem:poly_MV_S} and Corollary~\ref{coro:poly_MV_S}. We would like to point out that certain SDEs that are contained in the class of~\ref{eq:poly_MV_SDE} have already been studied in the 1980s, see~\cite{periodic_law}, in the context of diffusions with a period law. Note that the form of \eqref{eq:poly_MV_SDE} strongly resembles the one of the SDE in Example~\ref{ex:poly_SDE}. This resemblance is by no means a coincidence. In fact, we want to solve SDEs of the form of~\eqref{eq:poly_MV_SDE} using the theory of time-inhomogeneous polynomial processes. To do so, we can follow two approaches, a \emph{primal} and a \emph{dual} one, which we explore in detail in Section~\ref{subsec:primal} and~\ref{subsec:dual_poly} respectively. 

The primal approach is rather straightforward: we apply Itô's lemma to find a system of ODEs describing the moments $(\E[Z_t], \dots, \E[Z^N_t])_{t\geq 0}$. We then establish conditions under which we obtain a unique global solution to the corresponding system of ODEs and hence have a full description of the (deterministic) process $(\E[Z_t], \dots, \E[Z^N_t])_{t\geq 0}$. Plugging this process back into the original SDE~\eqref{eq:poly_MV_SDE}, we establish existence and uniqueness of solutions to the SDE via a solution of the associated martingale problem and conclude that this solution is a time-inhomogeneous polynomial process. 

The dual approach is slightly more involved: we start by considering a time-inhomogeneous polynomial process, depending on a deterministic auxiliary process. At first glance, this has nothing to do with the McKean-Vlasov SDEs. However, the auxiliary process is required to satisfy a system of ODEs, which are in fact associated with the Kolmogorov forward and backward equations of the process' transition operator. Using the moment formula, we can then conclude that the auxiliary process coincides with $(\E[Z_t], \dots, \E[Z^N_t])_{t\geq 0}$. 

As a third consideration of this section, we extend the primal approach to a common-noise setting, where the moments are replaced by the conditional moments (conditional on the common noise). We present these results in Section~\ref{subsec:common_noise}.

\subsection{Primal point of view}\label{subsec:primal}
Recall the form of the polynomial McKean-Vlasov SDEs given by~\eqref{eq:poly_MV_SDE}. To shorten the notation, we write $\Z_t := (Z_t, \dots, Z_t^N)$ and accordingly $\E[\Z_t] := 
(\E[Z_t], \dots, \E[Z_t^N])$. Using Itô's formula, it is then straightforward to derive for $k\in \{2, \ldots, N\}$
\begin{align}
dZ_t^k &= \Big[ k\cdot\big(b(\E[\Z_t])+ \beta(\E[\Z_t])Z_t\big) Z_t^{k-1}\\& \qquad + \frac{k(k-1)}{2}\big(c(\E[\Z_t])+ \gamma(\E[\Z_t])Z_t + \Gamma(\E[\Z_t])Z_t^2\big)Z_t^{k-2} \Big] dt \\& \quad + k \cdot \sqrt{c(\E[\Z_t])+ \gamma(\E[\Z_t])Z_t + \Gamma(\E[\Z_t])Z_t^2}Z_t^{k-1} dW_t. 
\end{align}
We would then like to take the expectation and obtain a non-linear ODE for the moments. However, at this stage we do not know whether $$M^{(k)}_t:= \int_0^{t} \sqrt{c(\E[\Z_s])+ \gamma(\E[\Z_s])Z_s + \Gamma(\E[\Z_s])Z_s^2}Z_s^{k-1} dW_s$$ is a true martingale or just a local martingale. Note that $(M^{(k)}_t)_{t\geq 0}$ is a true martingale if 
\begin{equation} \label{eq:true_mart}
\int_0^{t} \E\Big[\Big\lvert \big(c(\E[\Z_s])+ \gamma(\E[\Z_s])Z_s + \Gamma(\E[\Z_s])Z_s^2\big) Z_s^{2k-2}\Big\rvert\Big] ds< \infty
\end{equation}
for $t\geq 0$.
To establish this for all $k \in \{1, \ldots, N\}$, we impose certain growth-conditions on the maps $b, \beta, c, \gamma, \Gamma$. We present two sets of assumptions on these maps under which we can establish the true martingale property and later the existence of a solution to~\eqref{eq:poly_MV_SDE}.

\begin{definition}\label{def:norm_N}
Let us define $\|x\|_N:= (\sum_{i=1}^N |x_i|^{\frac{2}{i}})^{\frac{1}{2}}$ for any $x\in \R^N$. 
\end{definition}

\begin{assump} \label{as:A}
Let the following hold for all $x = (x_1, \dots, x_N)^{\mathsf{T}} \in \R^N$ 
\begin{enumerate}
\item $\beta: \R^N \rightarrow [-\beta_0, \beta_0]$ and $\Gamma: \R^N \rightarrow [-\Gamma_0, \Gamma_0]$ for some $\beta_0, \Gamma_0\geq 0$;
\item there is a $c_0\in \R$ and a continuous increasing function $f_c: \R_{\geq 0} \rightarrow \R_{\geq 0}$ such that $|c(x)| \leq c_0(1 + f_c(\lvert x_1\rvert)+  \|x\|_N^2)$;
\item there are $b_0,\gamma_0 \in \R$ and a continuous increasing function $f_{\gamma}: \R_{\geq 0} \rightarrow \R_{\geq 0}$ such that  $\lvert \gamma(x)\rvert \leq \gamma_0 (1+ f_{\gamma}(\lvert x_1\rvert) + \|x\|_N)$ and $\lvert b(x) \rvert \leq  b_0(1+ \lvert x_1\rvert )$.
\end{enumerate}
\end{assump}

\begin{assump}\label{as:B}
We assume that for all $x = (x_1, \dots, x_N)^{\mathsf{T}} \in \R^N$ 
\begin{enumerate}
\item $\beta: \R^N \rightarrow [-\beta_0, \beta_0]$ and $\Gamma: \R^N \rightarrow [-\Gamma_0, \Gamma_0]$ for some $\beta_0, \Gamma_0\geq 0$;
\item there is a $c_0\in \R$  such that $|c(x)| \leq c_0(1 +  \|x\|_N^2)$;
\item there are $ b_0,\gamma_0,\in \R$ such that  $\lvert \gamma(x)\rvert \leq \gamma_0 (1 + \|x\|_N)$ and $\lvert b(x) \rvert \leq  b_0(1+ \| x\|_N )$.
\end{enumerate}
\end{assump}

\begin{lemma}\label{lem:new_ass_maps}
Assume that a solution $(Z_t)_{t\geq 0}$ to~\eqref{eq:poly_MV_SDE} exists and that either Assumptions~\ref{as:A} \emph{or} Assumptions~\ref{as:B} hold. Then $(M_t)_{t\geq 0}$ is a true martingale. 
\end{lemma}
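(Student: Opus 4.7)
The goal is to verify, for each $k \in \{1, \ldots, N\}$, the integrability condition~\eqref{eq:true_mart}, which is the standard sufficient condition for $M^{(k)}$ to be a true martingale. Since $|c + \gamma Z_s + \Gamma Z_s^2| Z_s^{2k-2} \leq |c| Z_s^{2k-2} + |\gamma| |Z_s|^{2k-1} + |\Gamma| Z_s^{2k}$ with highest power $2N$ attained at $k=N$, Lyapunov's inequality reduces the problem to showing, for every $T>0$, that (a) $\sup_{s\leq T}\E[Z_s^{2N}] < \infty$ and (b) the deterministic functions $s\mapsto c(\E[\Z_s]), \gamma(\E[\Z_s]), \Gamma(\E[\Z_s])$ are bounded on $[0,T]$.

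To obtain (a), I would apply It\^o's formula to $Z_t^{2N}$ and localize with $\tau_n := \inf\{t : |Z_t| \geq n\}$. The stopped stochastic integral is a true martingale by boundedness of its integrand, so taking expectations yields, with $\phi_n(t) := \E[Z_{t\wedge\tau_n}^{2N}]$,
\begin{align*}
\phi_n(t) &= Z_0^{2N} + \int_0^t \E\Big[\Big(2N b Z_s^{2N-1} + 2N\beta Z_s^{2N}\\
&\qquad + N(2N-1)\big(c Z_s^{2N-2} + \gamma Z_s^{2N-1} + \Gamma Z_s^{2N}\big)\Big)\mathbf{1}_{s<\tau_n}\Big]\, ds,
\end{align*}
with $b,\beta,c,\gamma,\Gamma$ evaluated at the deterministic $\E[\Z_s]$. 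The crucial estimate is the Jensen--Lyapunov chain $\|\E[\Z_s]\|_N^2 \leq N\E[|Z_s|^N]^{2/N} \leq N\E[Z_s^{2N}]^{1/N}$, which exploits the $2/i$ scaling in Definition~\ref{def:norm_N}; combined with the Lyapunov bound $\E[|Z_s|^p] \leq \E[Z_s^{2N}]^{p/(2N)}$ and Young's inequality, it controls every drift term under Assumptions~\ref{as:B} by $C(1 + \E[Z_s^{2N}])$. Under Assumptions~\ref{as:A}, I would additionally decouple the mean-ODE: dominated convergence applied to the stopped SDE gives $\mu(t) := \E[Z_t] = \E[Z_0] + \int_0^t(b + \beta\mu)\,ds$, and the linear-in-$|x_1|$ bound on $b$ together with boundedness of $\beta$ yields $|\mu(t)| \leq (|\E[Z_0]| + b_0 T)e^{(b_0+\beta_0)T}$; by continuity this bounds $f_c, f_\gamma$ on $[0,T]$ and reduces Assumption~A to an enlarged-constants version of Assumption~B.

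Sending $n\to\infty$ and applying Fatou's lemma on the left yields $\E[Z_t^{2N}] \leq Z_0^{2N} + C\int_0^t(1 + \E[Z_s^{2N}])\,ds$, so Gr\"onwall's inequality produces $\sup_{s\leq T}\E[Z_s^{2N}] < \infty$; the same Jensen--Lyapunov chain then delivers (b), completing the argument. The principal obstacle is the McKean--Vlasov circular dependence of the SDE's coefficients on the very moments we are trying to bound. The decisive feature that resolves it is the carefully chosen scaling $2/i$ in the norm $\|\cdot\|_N$: it is precisely tuned so that Jensen and Young combine to give a \emph{linear} (rather than super-linear, and hence potentially explosive) Gr\"onwall estimate on $\E[Z_s^{2N}]$. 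Under Assumption~A, the additional ingredient is the decoupling of the mean-ODE, permitted because $b$ depends only linearly on $|x_1|$, which handles the otherwise unrestricted $f_c, f_\gamma$.
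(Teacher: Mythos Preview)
Your plan is correct and follows essentially the same route as the paper: reduce \eqref{eq:true_mart} to a uniform bound on $\E[|Z_t|^{2N}]$, exploit the Jensen--Lyapunov chain $\|\E[\Z_s]\|_N \lesssim_N \E[|Z_s|^N]^{1/N}$ (the paper records exactly this inequality), localize with $\tau_n=\inf\{t:|Z_t|\geq n\}$, close a Gronwall loop, and pass to the limit via Fatou; under Assumptions~\ref{as:A} you first isolate the mean $\E[Z_t]$ by its own Gronwall estimate, just as the paper does. The only mechanical difference is that you apply It\^o's formula to $Z^{2N}$ directly, whereas the paper raises the SDE representation to the $2N$-th power and invokes the BDG inequality; both routes produce the same linear-in-$\E[|Z|^{2N}]$ integral inequality, and your version arguably avoids one extra tool.
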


We refer the reader to Appendix~\ref{app:new_ass_maps} for the proof of Lemma~\ref{lem:new_ass_maps}.

\begin{example}\label{ex:maps_primal}
Let us present some examples for the maps $c, b, \gamma: \R^N \rightarrow \R$ which fulfill the set of Assumptions~\ref{as:A}:
\begin{enumerate}
    \item $c(x)= c_0 + \sum_{k=1}^N c_k\lvert x_k\rvert^{\frac{2}{k}} + c_{N+1} \exp(x_1)$ for $c_0, \dots, c_{N+1} \in \R$;
    \item $b(x)= b_0 + b_1\lvert x_1\rvert$ for $b_0, b_1 \in \R$;
    \item $\gamma(x)= \gamma_0+ \sum_{k=1}^N \gamma_k\lvert x_k \rvert^{\frac{1}{k}}+ \gamma_{N+1}\exp(x_1)$ for $\gamma_0, \dots, \gamma_{N+1} \in \R$.
\end{enumerate}
Similarly, the following maps fulfill the set of Assumptions~\ref{as:B}:
\begin{enumerate}
    \item $c(x)= c_0 + \sum_{k=1}^N c_k\lvert x_k\rvert^{\frac{2}{k}}$ for $c_0, \dots, c_N \in \R$;
    \item $b(x)= b_0 + \sum_{k=1}^N b_k\lvert x\rvert^{\frac{1}{k}}$ for $b_0, \dots, b_N \in \R$;
    \item $\gamma(x)= \sum_{k=1}^N \gamma_k\lvert x\rvert^{\frac{1}{k}}$ for $\gamma_1, \dots, \gamma_N \in \R$.
\end{enumerate}
\end{example}

\begin{proposition}\label{prop:existence_primal_ode}
Under the Assumptions~\ref{as:A} or Assumptions~\ref{as:B} on the maps $b, \beta, c, \gamma, \Gamma: \R^N \rightarrow \R$ and denoting $\bar{\Z}_t:=(1, Z_t, \dots, Z_t^N)^{\mathsf{T}}$, the process $(\E[\bar{\Z}_t])_{t\geq 0}$ satisfies the following $N+1$ dimensional non-linear ODE: 
\begin{equation}\label{eq:ode_L}
\bar{z}'(t)= \mathbf{L}(z(t))\bar{z}(t) \textrm{ and } \bar{z}(0) = \E[\bar{\Z}_0]
\end{equation}
with $\bar{z}(t)= (1, z(t))$ for $z(t)\in \R^N$ such that $\bar{z}(t)_0\equiv 1$, $\bar{z}(t)_i = z(t)_i$ for $i\in \{1, \dots, N\}$  and where for $j,k= 0, \dots, N$ 
\begin{equation*}
\mathbf{L}(z(t))_{kj}= \begin{cases} k\cdot \beta(z(t)) + \frac{k(k-1)}{2}\Gamma(z(t)) \,&\textrm{ if } k=j, \, k\neq 0 \\
k\cdot b(z(t))+ \frac{k(k-1)}{2}\gamma(z(t)) \,& \textrm{ if }k=j+1 \\
\frac{k(k-1)}{2}c(z(t)) \,& \textrm{ if }k=j+2\\
0 &\textrm{ else. }
\end{cases}
\end{equation*}
Moreover, for a fixed initial condition $z(0) \in\R^N$, the ODE given in \eqref{eq:ode_L} has a unique solution (up to its maximal time of existence) if $b, \beta, c, \gamma, \Gamma$ are locally Lipschitz on an open set $A\subseteq \R^N$ with $z(0)\in A$, in short we write $b, \beta, c, \gamma, \Gamma \in \textrm{Lip}(A,\R)$ for the local Lipschitz continuity requirement. 
\end{proposition}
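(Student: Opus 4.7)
The plan is two steps: first derive the ODE system \eqref{eq:ode_L} by applying Itô's formula to the power functions $x\mapsto x^k$ and taking expectations, and second invoke the classical Picard-Lindelöf theorem for the existence and uniqueness claim. Assume $(Z_t)_{t\geq 0}$ solves \eqref{eq:poly_MV_SDE}. Applying Itô's formula to $x^k$ for each $k\in\{1,\ldots,N\}$ reproduces the drift--local-martingale decomposition displayed just above Lemma~\ref{lem:new_ass_maps}, namely $Z_t^k = Z_0^k + \int_0^t D_s^{(k)}\,ds + M_t^{(k)}$, where $M^{(k)}$ is the stochastic integral there and $D_s^{(k)}$ is the bracketed drift term. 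By Lemma~\ref{lem:new_ass_maps}, either set of Assumptions~\ref{as:A} or \ref{as:B} forces $M^{(k)}$ to be a true martingale, so its expectation vanishes.

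Taking expectations, expanding the drift, and using linearity yields
\begin{align*}
\E[Z_t^k] &= \E[Z_0^k] + \int_0^t \Big[ k\, b(\E[\Z_s])\,\E[Z_s^{k-1}] + k\,\beta(\E[\Z_s])\,\E[Z_s^k] \\
&\quad + \tfrac{k(k-1)}{2}\bigl(c(\E[\Z_s])\,\E[Z_s^{k-2}] + \gamma(\E[\Z_s])\,\E[Z_s^{k-1}] + \Gamma(\E[\Z_s])\,\E[Z_s^k]\bigr)\Big]\, ds,
\end{align*}
with the convention $\E[Z_s^0]\equiv 1$ (so the negative-index terms vanish for $k=1,2$). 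The integrand is continuous in $s$, so we may differentiate in $t$. Grouping the right-hand side by $\E[Z_t^{k-2}]$, $\E[Z_t^{k-1}]$, and $\E[Z_t^k]$ yields exactly the three nonzero entries $\mathbf{L}_{k,k-2}$, $\mathbf{L}_{k,k-1}$, $\mathbf{L}_{k,k}$ of the $k$-th row as prescribed in the statement; the $k=0$ row vanishes, consistent with $\bar{z}(t)_0\equiv 1$. This establishes \eqref{eq:ode_L}.

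For the uniqueness claim, view the right-hand side of \eqref{eq:ode_L} as the map $F\colon A\rightarrow\R^{N+1}$ defined by $F(z):=\mathbf{L}(z)\bar{z}$. Each component of $F$ is a finite sum of products of polynomials in $z$ with the locally Lipschitz maps $b,\beta,c,\gamma,\Gamma$ evaluated at $z$; since sums and products of locally Lipschitz functions on an open set are locally Lipschitz, $F\in\textrm{Lip}(A,\R^{N+1})$ in the local sense. The classical Picard-Lindelöf theorem for ODEs with locally Lipschitz right-hand side then produces, for any $z(0)\in A$, a unique solution on a maximal interval $[0,T_{\max})$, which is precisely the content of the claim. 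No step here is especially delicate; the only nontrivial ingredient is the interchange of expectation and stochastic integration, already supplied by Lemma~\ref{lem:new_ass_maps}, together with the structural observation that $\mathbf{L}$ is lower triangular, which is what allows the ODE to close on the first $N$ moments without reference to any higher ones.
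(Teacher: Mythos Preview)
Your proof is correct and follows essentially the same approach as the paper: apply It\^o's formula to $x\mapsto x^k$, invoke Lemma~\ref{lem:new_ass_maps} to pass from local to true martingales so that expectations kill the stochastic integral, read off the ODE \eqref{eq:ode_L}, and conclude local existence and uniqueness from standard ODE theory (the paper cites Theorem~\ref{thm:sol_ode}, which is exactly the Picard--Lindel\"of statement you use). Your write-up is simply more detailed than the paper's two-line sketch; the only minor slip is the phrasing about ``negative-index terms'' for $k=1$, where the vanishing comes from the factor $k(k-1)/2=0$ rather than a convention, but this does not affect correctness.
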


Before we proceed to the proof, let us note for the statement regarding existence and uniqueness of solutions, we can start from any $z(0)\in \R^N$. However, for this solution to describe the process $(\E[\Z_t])_{t\geq0}$, we can only consider starting points of the form $z(0)= (z_0, z_0^2, \dots, z_0^N)$ for some $z_0\in \R$.

\begin{proof}[Proof of Proposition~\ref{prop:existence_primal_ode}] 

By Lemma~\ref{lem:new_ass_maps} the process $(M^{(k)}_t)_{t\geq0}$ is a martingale for all $k\in \{1, \dots, N\}$ and hence the form of \eqref{eq:ode_L} follows by an application of Itô's lemma. Moreover, the existence and uniqueness of solutions follows from Theorem~\ref{thm:sol_ode}.
\end{proof}

\begin{proposition}\label{prop:global_sol_ode}
Note that, if $b, \beta, c, \gamma, \Gamma$ are locally Lipschitz on $\R^N$ and either satisfy the Assumptions~\ref{as:A} or the Assumptions~\ref{as:B}, then the solution to \eqref{eq:ode_L} is necessarily global. 
\end{proposition}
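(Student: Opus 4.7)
The plan is to combine the local existence from Proposition~\ref{prop:existence_primal_ode} with an a priori bound that rules out finite-time blow-up. By the standard extension theorem for ODEs with a locally Lipschitz right-hand side, a unique solution $z$ exists on a maximal interval $[0,T^*)$ with $T^* \in (0,\infty]$, and $T^* < \infty$ would force $\limsup_{t \to T^*}\|z(t)\| = \infty$; the task is to rule this out.

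First I would reduce the Assumption~\ref{as:A} case to the Assumption~\ref{as:B} case by bootstrapping on $z_1$. Under Assumption~\ref{as:A}, the growth of $b$ is linear in $|z_1|$ alone and $\beta$ is uniformly bounded, so the first coordinate alone satisfies $|z_1'(t)| \leq b_0 + (b_0+\beta_0)|z_1(t)|$, whence Gronwall's inequality gives $|z_1(t)| \leq (|z_1(0)|+b_0 t)e^{(b_0+\beta_0)t}$, a bound finite on every compact $[0,T] \subset [0,T^*)$. Continuity of $f_c, f_\gamma$ then makes $f_c(|z_1(t)|), f_\gamma(|z_1(t)|) \leq C_T$ on $[0,T]$, upgrading the growth bounds on $[0,T]$ to (an even stronger version of) Assumption~\ref{as:B}. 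It therefore suffices to prove the result under Assumption~\ref{as:B}.

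Under Assumption~\ref{as:B}, I would work with the Lyapunov function $V(t) := \|z(t)\|_N^2 = \sum_{k=1}^N |z_k(t)|^{2/k}$, aiming for the integral inequality
\[
V(t) \leq V(0) + C \int_0^t \bigl(1+V(s)\bigr)\, ds, \qquad t \in [0,T^*),
\]
from which Gronwall yields $V(t) \leq (V(0)+Ct)e^{Ct}$, bounded on compact subintervals of $[0,T^*)$ and contradicting blow-up. Using $|z_j| \leq V^{j/2}$ together with the growth conditions of Assumption~\ref{as:B} bounds each contribution: for $k\in\{1,2\}$, $|z_k|^{2/k}$ is Lipschitz in $z_k$ and a direct computation combining the above with Young's inequality on the cross terms $\|z\|_N^{k-j}|z_j|$ gives $\frac{d}{dt}|z_k|^{2/k} \leq C(1+V)$. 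For $k\geq 3$, $x\mapsto|x|^{2/k}$ is only H\"older (exponent strictly less than $1$) at $0$, so the pointwise derivative may be $+\infty$ at transverse zeros of $z_k$; however, since $z_k\in C^1$ such zeros are isolated and contribute only an integrable $|t-t_0|^{2/k-1}$ singularity, whence $t\mapsto |z_k(t)|^{2/k}$ is absolutely continuous and the desired integral bound remains valid.

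The main obstacle will be the $k\geq 3$ part of the Lyapunov estimate: a crude bound would give $V'(t) \leq C(1+V^{k/2})$ with $k/2 > 1$, which would permit finite-time blow-up. Overcoming this requires exploiting the band-$2$ triangular structure of $\mathbf{L}(z)$ together with Young's inequality — the highest-order growth $\|z\|_N^k$ appearing in $z_k'$ is always multiplied by a factor sublinear in $|z_k|$ — to keep the right-hand side of the Gronwall inequality linear in $V$, while simultaneously handling the non-smoothness of $|z_k|^{2/k}$ at the origin in the integrated (rather than pointwise) sense.
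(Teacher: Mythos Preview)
Your reduction of Assumption~\ref{as:A} to Assumption~\ref{as:B} by first bounding $|z_1|$ via Gronwall is correct and is exactly what the paper does. The real issue is the Lyapunov estimate under Assumption~\ref{as:B}.

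Your choice $V=\|z\|_N^2=\sum_{k=1}^N|z_k|^{2/k}$ forces you to handle exponents $2/k<1$ for $k\ge 3$, and here your sketch has a genuine gap. Away from zeros of $z_k$ one has
\[
\frac{d}{dt}|z_k|^{2/k}=\tfrac{2}{k}\,|z_k|^{2/k-1}\operatorname{sgn}(z_k)\,z_k',
\qquad z_k'=A_k+B_k z_k,
\]
with $B_k$ bounded and $|A_k|\lesssim 1+\|z\|_N^{k}=1+V^{k/2}$. The $B_k$-part is harmless, but the $A_k$-part contributes $|z_k|^{2/k-1}|A_k|$. Since $2/k-1<0$, this is a \emph{negative} power of $|z_k|$ times a quantity that need not vanish with $z_k$ (it depends only on $z_{k-1},z_{k-2}$). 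Young's inequality cannot absorb a negative exponent: any splitting $ab\le a^p/p+b^q/q$ requires $p,q>1$, while matching powers would force $q=2/k<1$. Concretely, at a transverse zero $z_k(t_0)=0$, $z_k'(t_0)\neq 0$, the derivative of $|z_k|^{2/k}$ behaves like $|t-t_0|^{2/k-1}$, so $\int_{t_0-\delta}^{t_0+\delta}\bigl|\tfrac{d}{dt}|z_k|^{2/k}\bigr|\,dt\sim\delta^{2/k}$, whereas $\int_{t_0-\delta}^{t_0+\delta}(1+V)\,dt\sim\delta$. For small $\delta$ one has $\delta^{2/k}\gg\delta$, so the integral inequality $V(t)\le V(0)+C\int_0^t(1+V)$ fails locally and your Gronwall step does not close. (Also, zeros of a $C^1$ function need not be isolated, so the ``transverse zeros are isolated'' remark does not settle absolute continuity in general.)

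The paper sidesteps all of this by working with the Lyapunov quantity $\|z\|_N^{2N}$ rather than $\|z\|_N^{2}$. One bounds
\[
\|z(t)\|_N^{2N}\ \lesssim_N\ \sum_{j=1}^N|z_j(t)|^{2N/j},
\]
and now every exponent $2N/j$ is at least $2$. Writing each $z_j(t)$ in integral form and using the \emph{convex} Jensen inequality $|\int_0^t f|^{p}\le t^{p-1}\int_0^t|f|^{p}$ for $p=2N/j\ge 1$, together with $|\bar z_{j-1}|,|\bar z_{j-2}|\le 1+\|z\|_N^{\,j-1}$ and the growth bounds from Assumption~\ref{as:B}, every term is dominated by $C(1+t)^{2N}\int_0^t(1+\|z(s)\|_N^{2N})\,ds$, whence Gronwall gives the global bound. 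Raising the exponent from $2$ to $2N$ is precisely what makes the singular terms disappear and turns concave Jensen (which goes the wrong way) into convex Jensen (which goes the right way).
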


We present the proof of Proposition~\ref{prop:global_sol_ode} in Appendix~\ref{app:global_sol_ode}.

Note that the maps given in Example~\ref{ex:maps_primal} are not locally Lipschitz on all of $\R^N$ which is required in Proposition \ref{prop:global_sol_ode} to get a global solution to ~\eqref{eq:ode_L}.
However, the following corollary nevertheless allows us to %establish global solutions to~\eqref{eq:ode_L}
this under some different conditions that are satisfied by the maps in Example~\ref{ex:maps_primal}.

\begin{corollary}
Let $b, c, \gamma: \R^{N} \rightarrow \R_{\geq 0} $ satisfy the conditions of Assumptions~\ref{as:A}  or Assumptions~\ref{as:B}. If $z(0)>0$, additionally $b, c, \gamma \in \mathrm{Lip}(\R^N_{>0}; \R_{\geq 0})$ and $\beta, \Gamma$ are constant, then the solution to \eqref{eq:ode_L} is global. 
\end{corollary}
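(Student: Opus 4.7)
The plan is to combine an invariance argument with the growth-based a priori bound from Proposition~\ref{prop:global_sol_ode}. Since $b,c,\gamma$ are locally Lipschitz on $\R^N_{>0}$ and $\beta,\Gamma$ are constant, the right-hand side of \eqref{eq:ode_L} is locally Lipschitz on the open set $\R^N_{>0}$. Standard ODE theory (Theorem~\ref{thm:sol_ode}) therefore yields a unique solution on a maximal interval $[0,T^*)$ with $z(t)\in\R^N_{>0}$ throughout, and the only ways it could fail to be global are either escape to the boundary $\partial\R^N_{>0}$ in finite time, or blow-up $\|z(t)\|_N\to\infty$ as $t\uparrow T^*<\infty$. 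I would rule out each in turn.

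For invariance of $\R^N_{>0}$, I would proceed by induction on $k\in\{1,\ldots,N\}$, reading off from the structure of $\mathbf{L}$ in Proposition~\ref{prop:existence_primal_ode} that
\begin{align*}
z_k'(t) &= k\,b(z(t))\,z_{k-1}(t) + \tfrac{k(k-1)}{2}c(z(t))\,z_{k-2}(t) + \tfrac{k(k-1)}{2}\gamma(z(t))\,z_{k-1}(t) \\
&\quad + \bigl(k\beta + \tfrac{k(k-1)}{2}\Gamma\bigr)\,z_k(t),
\end{align*}
with the convention $z_0\equiv 1$. Assuming inductively that $z_{k-1}(t),z_{k-2}(t)\ge 0$ on $[0,T^*)$, the hypothesis $b,c,\gamma\ge 0$ makes the first three summands nonnegative, so
\[
z_k'(t)\ \ge\ \bigl(k\beta + \tfrac{k(k-1)}{2}\Gamma\bigr)\,z_k(t).
\]
Gronwall then gives $z_k(t)\ge z_k(0)\,\exp\!\bigl(k\beta+\tfrac{k(k-1)}{2}\Gamma\bigr)t>0$, closing the induction and showing the solution stays bounded away from $\partial\R^N_{>0}$ on any finite subinterval of $[0,T^*)$.

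To exclude finite-time blow-up, I would invoke the argument used to prove Proposition~\ref{prop:global_sol_ode}. That proof relies only on the growth bounds in Assumptions~\ref{as:A} or \ref{as:B} (together with boundedness of $\beta,\Gamma$) to derive a Gronwall-type inequality on $\|z(t)\|_N$; the full local-Lipschitz-on-$\R^N$ assumption there is used only to guarantee uniqueness of the local solution, which in our setting is already supplied by the local Lipschitz property on $\R^N_{>0}$ just established. Since Assumptions~\ref{as:A} (resp.~\ref{as:B}) are assumed here, the same estimate applies verbatim and forces $T^*=\infty$.

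The main obstacle is the invariance step: one must check that the induction closes cleanly, which is possible precisely because $\mathbf{L}$ has the lower-triangular/band structure in Proposition~\ref{prop:existence_primal_ode} (so $z_k'$ depends only on $z_1,\ldots,z_k$ with coefficients that are either nonnegative functions of $z$ times a previously-controlled $z_{j}$, or the constant linear-in-$z_k$ drift $k\beta+\tfrac{k(k-1)}{2}\Gamma$). After that, the passage from a local to a global solution is just a reuse of the growth estimate in Appendix~\ref{app:global_sol_ode}, with no new difficulty.
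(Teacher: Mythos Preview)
Your proposal is correct and follows essentially the same route as the paper: establish strict positivity of each component by induction, using that $b,c,\gamma\ge 0$ and $\beta,\Gamma$ constant make $z_k'\ge (k\beta+\tfrac{k(k-1)}{2}\Gamma)z_k$, then apply Gronwall (the paper applies it to $-z_k$ to reach the same exponential lower bound). Your write-up is in fact more complete than the paper's, since you explicitly separate the two possible failure modes (hitting $\partial\R^N_{>0}$ versus blow-up) and invoke the growth estimate underlying Proposition~\ref{prop:global_sol_ode} to exclude the latter, whereas the paper's proof only spells out the positivity step and leaves non-explosion implicit.
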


\begin{proof}
Let us first note that by assumption $z(0)>0$. Moreover, it holds that 
\begin{align*}
z_1(t) &= z_1(0)+ \int_0^t b(z(s)) ds+ \int_0^t \beta\cdot z_1(s) ds \\ &\geq z_1(0)+ \int_0^t \beta\cdot z_1(s) ds,
\end{align*}
where we used the fact that $b(\cdot)$ is non-negative. Hence, we can use  Gronwall's inequality for $$ -z_1(t) \leq -z_1(0) + \int_0^t \beta \cdot (-z_1(s)) ds$$ and thereby obtain $$ -z_1(t)\leq -z_1(0)\cdot \exp(\beta t) \Leftrightarrow z_1(t)\geq z_1(0)\cdot \exp(\beta t).
$$
Therefore, we have shown that if $z_1(0)>0$ the  first component stays positive for all $t\geq 0$. Similarly, this holds for higher components. We show this by induction and therefore assume it holds up to some $1\leq k\leq N-1 $ that $(z_i(t))_{1\leq i \leq k}>0$ for all $t\geq 0$, then it holds 
\begin{align*}
z_{k+1}(t) &= z_{k+1}(0) + \int_0^t k b(z(s))z_k(s) + \frac{k(k-1)}{2}\big(c(z(s))z_{k-1}(s)+ \gamma(z(s))z_k(s)\big) ds \\  &\quad +\int_0^t \left(k\beta + \frac{k(k-1)}{2}\Gamma\right)z_{k+1}(s) ds \\ &\geq z_{k+1}(0) + \int_0^t \left(k\beta + \frac{k(k-1)}{2}\Gamma\right)z_{k+1}(s) ds. 
\end{align*}
Again, using Gronwall's inequality, we obtain
$$z_{k+1}(t)\geq z_{k+1}(0)\exp\left(t\left(\frac{k(k-1)}{2}\Gamma + k\beta\right)\right).$$
\end{proof}

\begin{theorem}\label{thm:poly_MV}
Let us consider a McKean-Vlasov SDE of the form 
\begin{align}\label{eq:Poly_MV}
dZ_t &= \left(b(\E[\Z_t]) + \beta(\E[\Z_t]) Z_t \right)dt + \sqrt{c(\E[\Z_t])+ \gamma(\E[\Z_t])Z_t+\Gamma(\E[\Z_t]) Z_t^2}dW_t
\end{align}
for some $N\geq1$, fixed $\E[\Z_0]\in \R^N$, continuous maps $b,\, \beta, \,c, \,\gamma, \, \Gamma: \R^N \rightarrow \R$ and a Brownian motion $(W_t)_{t\geq 0}$. If $b, \beta, c, \gamma, \Gamma$ are locally Lipschitz continuous on $\R^N$, satisfy either the set of Assumptions~\ref{as:A} or Assumptions~\ref{as:B},
and the term under the square-root is non-negative, then there exists a unique strong solution $(Z_t)_{t\geq 0}$. Moreover, $(Z_t)_{t\geq 0}$ is a time-inhomogeneous polynomial process.
\end{theorem}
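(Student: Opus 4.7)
The plan is to decouple the McKean-Vlasov SDE into a deterministic ODE for the candidate moment vector and a time-inhomogeneous (non-McKean-Vlasov) SDE whose coefficients are frozen along that ODE solution, and then show that the two pieces are consistent. First I would apply Proposition \ref{prop:existence_primal_ode} together with Proposition \ref{prop:global_sol_ode}: because $b, \beta, c, \gamma, \Gamma$ are locally Lipschitz on $\R^N$ and either set of Assumptions is in force, the non-linear ODE $\bar z'(t) = \mathbf{L}(z(t))\bar z(t)$ with initial datum $\bar z(0)=\E[\bar\Z_0]$ has a \emph{unique global} solution $z:[0,\infty)\to\R^N$, which is continuous and locally bounded.

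Next I would freeze this $z$ into the coefficients and consider the standard time-inhomogeneous SDE
\begin{equation*}
dX_t = \bigl(b(z(t))+\beta(z(t))X_t\bigr)dt+\sqrt{c(z(t))+\gamma(z(t))X_t+\Gamma(z(t))X_t^2}\,dW_t, \qquad X_0\stackrel{d}{=}Z_0.
\end{equation*}
For this SDE I would verify the hypotheses of Theorem \ref{thm:sol_MP} on $S=\R$: the PMP is automatic by Proposition \ref{prop:pmp}(i); since $\beta,\Gamma$ are bounded and $z$ is continuous, for a suitable continuous strictly increasing $h$ dominating $|z(\cdot)|$ one gets the linear/quadratic growth bounds \eqref{eq:sigma_growth}--\eqref{eq:xb_growth} from the Assumptions; and the coefficients, being polynomials in $x$ with locally bounded continuous time-coefficients, are locally Lipschitz in $x$ uniformly in $t$ on bounded intervals. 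Thus there exists a pathwise unique strong solution $(X_t)_{t\ge 0}$, which moreover is a time-inhomogeneous polynomial diffusion by Definition \ref{def:poly_diffusion} and Proposition \ref{prop:poly_diffusion}.

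The heart of the argument will be the \emph{fixed-point consistency} step: I have to verify that the moments of $X$ actually reproduce $z$, i.e.\ $\E[\X_t]=z(t)$ for all $t\ge 0$, so that $X$ solves the original McKean-Vlasov SDE \eqref{eq:Poly_MV}. To do this I would first establish that $\E[|X_t|^k]<\infty$ for all $k\in\{1,\dots,N\}$ and locally in $t$ via standard localisation, BDG and Gronwall applied to the frozen SDE (using the quadratic growth in $x$ and the local boundedness of $z$). Once finite moments are in hand, Lemma \ref{lem:new_ass_maps} gives the true-martingale property of the stochastic integrals appearing after applying Itô to $X^k$, whence $m(t):=(\E[X_t],\dots,\E[X_t^N])$ satisfies exactly the ODE $\bar m'(t)=\mathbf{L}(m(t))\bar m(t)$ with $m(0)=\E[\bar\Z_0]$. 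Uniqueness of the global solution to this ODE forces $m(t)=z(t)$, closing the loop.

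Finally, uniqueness for the McKean-Vlasov equation itself follows from the same two-step structure: any strong solution $Z$ has, by Lemma \ref{lem:new_ass_maps} and Itô, a moment vector that solves \eqref{eq:ode_L} from the same initial condition, hence equals $z$ by Proposition \ref{prop:global_sol_ode}; consequently $Z$ solves the frozen SDE above, and pathwise uniqueness from Theorem \ref{thm:sol_MP} yields $Z=X$. The polynomial process assertion is then immediate from Proposition \ref{prop:poly_diffusion}. The main obstacle I anticipate is the fixed-point consistency step, specifically securing \eqref{eq:true_mart} uniformly enough in time to justify passing Itô's expectation through for all orders $k\le N$ simultaneously; this is where Assumptions \ref{as:A}/\ref{as:B} — and not just local Lipschitz continuity — are essential, and where the use of the weighted norm $\|\cdot\|_N$ of Definition \ref{def:norm_N} is tailored precisely to make the hierarchy of moments close.
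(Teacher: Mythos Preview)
Your proposal is correct and follows essentially the same decoupling strategy as the paper: solve the non-linear moment ODE globally via Propositions~\ref{prop:existence_primal_ode}--\ref{prop:global_sol_ode}, freeze it into a time-inhomogeneous SDE handled by Theorem~\ref{thm:sol_MP}, and then close the loop via It\^o and ODE uniqueness (your explicit uniqueness argument for the McKean--Vlasov equation is a welcome addition that the paper leaves implicit). One small correction to the consistency step: applying It\^o to the \emph{frozen} SDE yields that $m$ satisfies the \emph{linear} system $\bar m'(t)=\mathbf{L}(z(t))\bar m(t)$ rather than $\bar m'(t)=\mathbf{L}(m(t))\bar m(t)$, so the identification $m\equiv z$ follows from uniqueness for this linear ODE (both $m$ and $z$ solve it with the same initial datum), after which the non-linear form holds a posteriori.
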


\begin{proof}
    Let $(z(t))_{t\geq 0}$ be the unique global solution to \eqref{eq:ode_L} with $z(0)= \E[\Z_0]$. Note that by the assumptions on $b, \beta, c, \gamma, \Gamma$ in the theorem such a global solution exists by Proposition \ref{prop:global_sol_ode}. Let us now study the following SDE: 
    \begin{align}\label{eq:poly_MV_SDE_helper}
        dZ_t = \big(b(z(t))+ \beta(z(t))Z_t\big)dt + \sqrt{c(z(t)) + \gamma(z(t))Z_t + \Gamma(z(t))Z^2_t} dW_t. 
    \end{align}
    We now show that a solution to the martingale problem associated with~\ref{eq:poly_MV_SDE_helper} exists, by the assumptions on $b, \beta, c, \gamma, \Gamma$. Let us elaborate on this point for the two cases of Assumptions~\ref{as:A} and Assumptions~\ref{as:B} separately. 

    Under the set of Assumptions~\ref{as:A} we have established in the proof of Proposition~\ref{prop:global_sol_ode} that for all $t\in [0,\infty)$ it holds that 
    $$ \sup_{s\in [0,t]} \lvert z_1(s) \rvert \leq h_1(t) \textrm{ and } \|z(t)\|_N^{2N} \leq h_{2N}(t).$$
    This allows us to verify the growth requirements stated in Theorem~\ref{thm:sol_MP} for \eqref{eq:poly_MV_SDE_helper}: 
    \begin{align*}
    \sigma^2(t, x) &= c(z(t))+ \gamma(z(t))x + \Gamma(z(t))x^2 \\ &\leq c_0\Big(1+ f_c\big(h_1(t)\big) + h_{2N}(t)\Big) + \gamma_0 \Big(1+ f_{\gamma}\big(h_1(t)\big) + h_{2N}(t)\Big)x + \Gamma_0 x^2 \\
    x B(t, x)& = b(z(t))x + \beta(z(t))x^2 \\& \leq b_0\left(1+ h_1(t)\right)x + \beta_0 x^2
    \end{align*}
    and indeed the growth conditions~\eqref{eq:xb_growth},~\eqref{eq:sigma_growth} are satisfied for $h(t)= f_c(h_1(t)) + h_{2N}(t)+ h_1(t)$, $K^{\sigma}= c_0+\gamma_0+ \Gamma_0$ and $K^{B}= b_0 + \beta_0$. 

    The arguments in the case of Assumptions~\ref{as:B} are very similar, recalling we have established $\|z(t)\|_N^{2N}\leq g_{2N}(t)$ in the proof of Proposition~\ref{prop:global_sol_ode} and hence for~\eqref{eq:poly_MV_SDE_helper} it holds that 
    \begin{align*}
    \sigma^2(t, x) &= c(z(t))+ \gamma(z(t))x + \Gamma(z(t))x^2 \\ &\leq c_0\big(1+ g_{2N}(t)\big) + \gamma_0 \big(1+ g_{2N}(t)\big)x + \Gamma_0 x^2 \\
    x B(t, x)& = b(z(t))x + \beta(z(t))x^2 \\& \leq b_0\left(1+ g_{2N}(t)\right)x + \beta_0 x^2.
    \end{align*}
    In this case, the growth conditions~\eqref{eq:xb_growth},~\eqref{eq:sigma_growth} are satisfied for $h(t)= g_{2N}(t)$, $K^{\sigma}= c_0+\gamma_0+ \Gamma_0$ and $K^{B}= b_0 + \beta_0$. The assumption %in of Theorem~\ref{thm:poly_MV} 
    that $c, \gamma, \Gamma$ are such that the term under the square-root is non-negative ensures that the PMP holds. Therefore, there exists a solution to the martingale problem associated with~\eqref{eq:poly_MV_SDE_helper}. Moreover, $\sigma^2(t, x), B(t,x)$ fulfil the local Lipschitz continuity requirement of Theorem~\ref{thm:sol_MP}. Therefore, Theorem~\ref{thm:sol_MP} is applicable to \eqref{eq:poly_MV_SDE_helper} and we conclude that \eqref{eq:poly_MV_SDE_helper} admits a unique strong solution. 
    
    Applying Itô's lemma to the solution $(Z_t)_{t\geq0}$ of \eqref{eq:poly_MV_SDE_helper} and noting that the local martingales are true martingales due to the growth-bounds on $b, \beta, c, \gamma, \Gamma$, it is apparent that $\E[\Z_t]$ is a solution to \eqref{eq:ode_L}. On the other hand, $(z(t))_{t\geq0}$ is the unique global solution to \eqref{eq:ode_L} and therefore $\E[\Z_t] = z(t)$ for all $t\in [0, \infty)$. Hence, we can conclude that \eqref{eq:poly_MV_SDE} and \eqref{eq:poly_MV_SDE_helper} coincide, and we have found a unique strong solution to \eqref{eq:Poly_MV}. Moreover, the solution of \eqref{eq:Poly_MV} clearly satisfies the definition of a time-inhomogeneous polynomial process, see Definition~\ref{def:poly_diffusion}. 
\end{proof}

\begin{remark}\label{rem:poly_MV_S}
Concrete sufficient conditions on the maps $c, \gamma, \Gamma$ for the term under the square-root to be positive are: 
\begin{itemize}
\item $c\geq 0$, $\Gamma \geq 0$ and $\gamma\equiv 0$, or, 
\item the term under the square-root is of the form $c(\cdot) + \gamma(\cdot) z + \Gamma(\cdot)z^2 = (\tilde{\gamma}(\cdot) + \tilde{\Gamma}(\cdot)z)$.
\end{itemize}
\end{remark}
In addition to the general case of Theorem~\ref{thm:poly_MV} where the state space is $\mathbb{R}$, we can study the state-spaces $\R_{\geq 0}$ and $[0,1]$. 

\begin{corollary}\label{coro:poly_MV_S}
Let $S=\R_{\geq 0}$ or $S=[0,T]$ and $\E[\Z_0]\in S$. Under the set of Assumptions~\ref{as:A} or Assumptions~\ref{as:B} and if $b, \beta, c, \gamma, \Gamma$ are locally Lipschitz continuous on $\R_{\geq0}$, sufficient conditions for existence of a strong solution to \eqref{eq:Poly_MV} on the respective state spaces are:
\begin{itemize}
\item $S= \R_{\geq 0}$: in this case a sufficient condition is $b: \R_{\geq 0} \rightarrow \R_{\geq 0}$ and $c\equiv 0$, $\gamma, \Gamma: \R_{\geq 0} \rightarrow \R_{\geq 0}$.

\item $S = [0,1]$: here sufficient conditions are $b: \R_{\geq 0} \rightarrow \R_{\geq 0}$, $b + \beta: \R_{\geq 0} \rightarrow \R_{\leq 0}$, and $c\equiv 0$, $\gamma \equiv -\Gamma \geq 0$.  
\end{itemize}
In each case, the solution to \eqref{eq:Poly_MV} is a time-inhomogeneous polynomial process on the respective state-space $S$. 
\end{corollary}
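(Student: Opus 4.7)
The plan is to mirror the argument of Theorem~\ref{thm:poly_MV}: first produce a unique global solution $(z(t))_{t\ge 0}$ to the ODE~\eqref{eq:ode_L} with $z(0)=\E[\Z_0]$, then analyse the auxiliary linear-coefficient SDE
\begin{align*}
dZ_t = \bigl(b(z(t))+\beta(z(t))Z_t\bigr)dt + \sqrt{c(z(t))+\gamma(z(t))Z_t+\Gamma(z(t))Z_t^2}\,dW_t
\end{align*}
on the restricted state space $S$, and finally identify $\E[\Z_t]$ with $z(t)$ so as to conclude that the auxiliary SDE coincides with~\eqref{eq:Poly_MV}. The time-inhomogeneous polynomial property is then immediate from Definition~\ref{def:poly_diffusion}, since the coefficients of the auxiliary SDE are of affine and quadratic type in the state, with time-dependence entering only through $z(t)$.

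The first step is to verify that $z(t)$ stays in the domain where the maps $b,\beta,c,\gamma,\Gamma$ are locally Lipschitz, so that Proposition~\ref{prop:global_sol_ode} (or a suitable variant thereof) still yields a global solution. For $S=\R_{\ge 0}$, since $\E[\Z_0]\in\R_{\ge 0}^N$ and $b,c,\gamma,\Gamma\ge 0$, I would reuse the componentwise Gronwall induction already carried out in the primal part of the section to show $z_k(t)\ge 0$ for every $k$; the polynomial growth bounds on $z(t)$ established in the proof of Proposition~\ref{prop:global_sol_ode} then carry through unchanged. For $S=[0,1]$ the additional sign constraints $c\equiv 0$, $\gamma=-\Gamma\ge 0$ and $b+\beta\le 0$ will, after the identification $z(t)=\E[\Z_t]$, force $z(t)\in[0,1]^N$; a priori this is handled by noting that the auxiliary SDE's solution (once constructed) lives in $[0,1]$, so the moments are automatically bounded and the ODE solution remains in the admissible cone.

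The central step is to check that the time-extended generator of the auxiliary SDE satisfies the positive maximum principle on the appropriate $S$ using Proposition~\ref{prop:pmp}. For $S=\R_{\ge 0}$, the conditions are $\Sigma^2(\cdot,0)\equiv 0$ and $B(\cdot,0)\ge 0$, which are precisely $c\equiv 0$ and $b\ge 0$ (since $z(t)\ge 0$). For $S=[0,1]$ we additionally need $\Sigma^2(\cdot,1)\equiv 0$, i.e.\ $c+\gamma+\Gamma\equiv 0$, which the hypothesis $c\equiv 0,\ \gamma=-\Gamma$ realises, and $B(\cdot,1)\le 0$, i.e.\ $b+\beta\le 0$. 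Non-negativity of the term under the square root on the respective $S$ is also immediate from the hypotheses: on $\R_{\ge 0}$ it equals $\gamma x+\Gamma x^2$ with $\gamma,\Gamma\ge 0$; on $[0,1]$ it equals $\gamma\, x(1-x)$ with $\gamma\ge 0$. The growth hypotheses~\eqref{eq:sigma_growth}--\eqref{eq:xb_growth} required by Theorem~\ref{thm:sol_MP} follow from the very same computation as in the proof of Theorem~\ref{thm:poly_MV}, using the polynomial-in-$t$ bounds on $z(t)$ supplied by Assumptions~\ref{as:A} or~\ref{as:B}; and the local Lipschitz assumption on $b,\beta,c,\gamma,\Gamma$ transfers to local Lipschitz continuity in $x$ (uniformly in $t$ on compacts) of $B(t,x)$ and $\Sigma^2(t,x)$. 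Theorem~\ref{thm:sol_MP} then delivers a pathwise unique strong solution with sample paths in $S$.

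Finally, one applies It\^o's formula to $Z_t^k$ for $k=1,\dots,N$: the martingale-property argument of Lemma~\ref{lem:new_ass_maps} holds (the growth hypothesis on $b,\beta,c,\gamma,\Gamma$ is exactly what is needed), so taking expectations shows that $(\E[\Z_t])_{t\ge 0}$ solves~\eqref{eq:ode_L} with the same initial condition as $z(t)$. By uniqueness of ODE solutions in Proposition~\ref{prop:existence_primal_ode} one gets $\E[\Z_t]=z(t)$, hence the auxiliary SDE is~\eqref{eq:Poly_MV}, and the proof is complete. The main subtlety I expect is the case $S=[0,1]$: establishing that the ODE solution actually stays in $[0,1]^N$ a priori is circular, so one should argue in the reverse direction, i.e.\ first construct the strong solution to the auxiliary SDE confined to $[0,1]$, and then use the moment identification to justify that the ODE has stayed in the moment cone all along; this is where Assumption~\ref{as:A} or~\ref{as:B} is used, merely to license existence and uniqueness of $z(t)$ on the domain where the coefficients are defined.
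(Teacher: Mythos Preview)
Your proposal is correct and follows essentially the same route as the paper: the paper's own proof is a three-sentence remark that the growth conditions are unchanged relative to Theorem~\ref{thm:poly_MV} and that the additional sign constraints are precisely what Proposition~\ref{prop:pmp} asks for the PMP on $S=\R_{\ge 0}$ and $S=[0,1]$, after which Theorem~\ref{thm:sol_MP} concludes. Your write-up is considerably more explicit --- you spell out the PMP verification, the non-negativity of the diffusion on each $S$, and you flag the domain issue for $z(t)$ (in particular the circularity in the $[0,1]$ case) that the paper's terse proof does not address --- but the architecture is identical.
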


\begin{proof}
Note that by Assumptions~\ref{as:A} and~\ref{as:B} respectively, the growth-conditions of the SDE's coefficients are unchanged to the case of Theorem~\ref{thm:poly_MV}. The only conditions that differ in this situation are the ones relating to the PMP. Indeed, additional conditions above correspond to the requirements given in Proposition~\ref{prop:pmp} for the PMP to hold. Then, existence of a unique strong solution follows directly by Theorem~\ref{thm:sol_MP}. 

\end{proof}

\begin{corollary}[Moment Formula] \label{coro:primal_moment_formula}
	Let $(Z_t)_{t\geq 0}$ be the solution of ~\eqref{eq:Poly_MV} and the assumptions of Theorem~\ref{thm:poly_MV} be satisfied. Recall the notation $\bar{\Z}_t= (1, Z_t, Z_t^2, \dots, Z_t^N)^{\mathsf{T}}$ for all $t\geq 0$. Note that for $u= (u_0, \dots, u_N)^{\mathsf{T}}\in\R^{N+1}$ we can denote polynomials in $Z_t$ by $\langle \bar{\Z}_t, u\rangle= u_0 + u_1 Z_t + \dots + u_N Z^N_t$. Then, the following moment formula holds on any arbitrary but fixed time horizon $[0,T]$: 
	\begin{equation*}
		\E[\langle \bar{\Z}_t, u\rangle \lvert \mathcal{F}_s] = (1, Z_s, Z^2_s, \dots, Z^N_s) \cdot e^{\mathbf{\Omega}(s,t)}\cdot u,
	\end{equation*}
	any $0\leq s\leq t \leq T$, any $u\in\R^{N+1}$ and where $\mathbf{\Omega}(s,t)= \sum_{k=0}^{\infty} \mathbf{\Omega}_k(s,t)$ is the Magnus expansion $\bm{\mathcal{E}}(t)$ with $\bm{\mathcal{E}}(t)= \varepsilon\cdot \mathbf{L}(\E[\bar{\Z}]_t)^{\mathsf{T}}$ for $\varepsilon >0$ such that $\int_0^T \| \bm{\mathcal{E}}(s)\|_2 ds < \pi$.
\end{corollary}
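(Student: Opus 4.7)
The strategy is to recognize the corollary as a direct specialization of the abstract moment formula in Theorem~\ref{thm:moment_formula} to the solution of \eqref{eq:Poly_MV}, once one reads off the matrix representation of the generator in the monomial basis. I would organize the argument in three steps.

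\textbf{Step 1: Identify the polynomial diffusion structure.} By Theorem~\ref{thm:poly_MV}, the assumptions yield a unique strong solution $(Z_t)_{t\geq 0}$ and the process $z(t) := \E[\bar{\Z}_t]$ is the unique global solution of the ODE \eqref{eq:ode_L}, hence in particular continuous in $t$. Therefore the differential characteristics of $(Z_t)$ are
\[
\mathfrak{b}(t,x) = b(z(t)) + \beta(z(t))x, \qquad \mathfrak{c}(t,x) = c(z(t)) + \gamma(z(t))x + \Gamma(z(t))x^2,
\]
which are continuous in $t$, affine in $x$, and quadratic in $x$ respectively. Definition~\ref{def:poly_diffusion} is thus satisfied and Proposition~\ref{prop:poly_diffusion} identifies $(Z_t)_{t\geq 0}$ as a time-inhomogeneous polynomial process with generator $\mathcal{H}_s f(x) = \mathfrak{b}(s,x)f'(x) + \tfrac{1}{2}\mathfrak{c}(s,x)f''(x)$ for all $f \in \mathcal{P}_m(\R)$.

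\textbf{Step 2: Compute the matrix representation of $\mathcal{H}_s$.} Choose the monomial basis $v_k(x) = x^k$ for $k=0,\dots,N$ so that $\mathcal{P}_N(\R) = \mathrm{span}\{v_0,\dots,v_N\}$. A direct calculation gives
\[
\mathcal{H}_s v_k = \tfrac{k(k-1)}{2}c(z(s))\, v_{k-2} + \left[kb(z(s)) + \tfrac{k(k-1)}{2}\gamma(z(s))\right] v_{k-1} + \left[k\beta(z(s)) + \tfrac{k(k-1)}{2}\Gamma(z(s))\right] v_k.
\]
Reading off the coefficients and comparing with the convention $\mathcal{H}_s v_k = \sum_j \mathbf{H}_{s,jk} v_j$ of \eqref{eq:transform_operator} together with the definition of $\mathbf{L}$ in Proposition~\ref{prop:existence_primal_ode} shows that entry $(j,k)$ of $\mathbf{H}_s$ equals entry $(k,j)$ of $\mathbf{L}(z(s))$, i.e.\ $\mathbf{H}_s = \mathbf{L}(z(s))^{\mathsf{T}}$.

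\textbf{Step 3: Apply the moment formula.} With this identification Theorem~\ref{thm:moment_formula} applied to $f = \langle \,\cdot\, , u \rangle = \sum_{i=0}^N u_i v_i$ yields
\[
\E[\langle \bar{\Z}_t, u\rangle \mid \mathcal{F}_s] = (1, Z_s, \dots, Z_s^N)\, e^{\mathbf{\Omega}(s,t)}\, u,
\]
where $\mathbf{\Omega}(s,t)$ is the Magnus expansion of the driving matrix, provided we are working in a basis for which $\int_0^T \|\mathbf{H}_s\|_2 \, ds < \pi$. Since this bound need not hold for the raw matrix $\mathbf{L}(z(s))^{\mathsf{T}}$, we invoke the basis rescaling observation of the footnote to Theorem~\ref{thm:moment_formula}: passing to an equivalently rescaled monomial basis multiplies the operator matrix by an arbitrarily small factor $\varepsilon > 0$, yielding $\bm{\mathcal{E}}(s) = \varepsilon\, \mathbf{L}(z(s))^{\mathsf{T}}$, and for $\varepsilon$ small enough one has $\int_0^T \|\bm{\mathcal{E}}(s)\|_2\, ds < \pi$. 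Since the evaluation of a polynomial at a point is independent of the basis used to represent it, the resulting formula coincides with the stated one.

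\textbf{Main obstacle.} The substantive technical point is the convergence of the Magnus series: everything else is essentially bookkeeping once Theorem~\ref{thm:poly_MV} is in hand. One must be careful to note that $t \mapsto \mathbf{L}(z(t))$ is continuous on $[0,T]$ (hence bounded) so that an $\varepsilon > 0$ of the desired type can always be chosen, and also to record that the basis change cancels when we evaluate at the monomial values $(1, Z_s,\dots,Z_s^N)$ on the left and on the coefficient vector $u$ on the right.
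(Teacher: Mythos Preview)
Your proposal is correct and follows essentially the same route as the paper's proof: identify the process as a time-inhomogeneous polynomial diffusion via Proposition~\ref{prop:poly_diffusion}, compute $\mathcal{H}_s v_k$ in the monomial basis to conclude $\mathbf{H}_s=\mathbf{L}(\E[\Z_s])^{\mathsf{T}}$, and then invoke Theorem~\ref{thm:moment_formula}. Your treatment is in fact more explicit than the paper's on the Magnus convergence issue (continuity of $t\mapsto\mathbf{L}(z(t))$ on $[0,T]$ and the basis rescaling to achieve $\int_0^T\|\bm{\mathcal{E}}(s)\|_2\,ds<\pi$), which the paper leaves implicit in the statement.
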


\begin{proof}
	See Theorem~\ref{thm:moment_formula}. To find the infinitesimal generators $(\mathbf{H}_s)_{s\geq 0}$ of the process and show that $\mathbf{H}_s =\mathbf{L}(\E[\bar{\Z}]_s)^{\mathsf{T}}$ 
for all $s\geq 0$ we apply Proposition~\ref{prop:poly_diffusion} to elements of a polynomial basis. In particular, let us consider the basis $(v_0, \dots, v_N)$ with $v_i:= (x\mapsto x^i)$ for all $i \in \{0, \dots, N\}$. Then, applying Proposition~\ref{prop:poly_diffusion} we find for $k\geq 2$ that 
    \begin{align*}
        (\mathbf{H}_sv_k)(x) =& v_{k-2}(x)\cdot\left(\frac{k(k-1)}{2}c(\E[\Z_t]) \right)+  v_{k-1}(x)\cdot\left( k\cdot b(\E[\Z_t]) + \frac{k(k-1)}{2}\gamma(\E[\Z_t])\right) \\ &+ v_k(x) \left( k\cdot \beta(\E[\Z_t]) + \frac{k(k-1)}{2} \Gamma(\E[\Z_t])\right).
    \end{align*}     
    Recalling the convention we introduced in~\eqref{eq:transform_operator}, it follows that $(\mathbf{H}_s)_{jk} = \mathbf{L}(\E[\Z_s])_{kj}$ for all $k,j \in \{0, \dots, N\}$. Having found the infinitesimal generators, the rest of the claim follows directly from and application of Theorem~\ref{thm:moment_formula}.
\end{proof}

\begin{remark}
Note that the above expression for $\E[\langle \bar{\Z}_t, u \rangle \lvert \mathcal{F}_s]$ can also be obtained by solving Equation~\ref{eq:ode_L} with the initial value given by $\bar{z}(0)=\big(1, Z_s(\omega), \ldots, Z_s(\omega)^N\big)^{\mathsf{T}}$.
\end{remark}

\subsection{Dual point of view}\label{subsec:dual_poly}
We now approach polynomial McKean-Vlasov SDEs using a dual approach. Recall our notation $\bar{\Z}_t= (1, Z_t, Z_t^2, \dots, Z^N_t)^{\mathsf{T}}$ for all $t\geq 0$. We consider a process $(Z_t)_{t\geq 0}$ given by  SDEs of the form 
\begin{align}\label{eq:dual_SDE}
    dZ_t&= \left(\tilde{b}\left(\langle \bar{\Z}_0, \cc(t,0,e_0)\rangle, \dots, \langle \bar{\Z}_0, \cc(t,0,e_N)\rangle\right)+ \tilde{\beta}(\langle \bar{\Z}_0, \cc(t,0,e_0)\rangle, \dots, \langle \bar{\Z}_0, \cc(t,0,e_N)\rangle)Z_t\right)dt \nonumber \\& \quad+ \Big(\tilde{c}(\langle \bar{\Z}_0, \cc(t,0,e_0)\rangle, \dots, \langle \bar{\Z}_0, \cc(t,0,e_N)\rangle)+ \tilde{\gamma}(\langle \bar{\Z}_0, \cc(t,0,e_0)\rangle, \dots, \langle \bar{\Z}_0, \cc(t,0,e_N)\rangle)Z_t \nonumber\\ & \qquad+ \tilde{\Gamma}(\langle \bar{\Z}_0, \cc(t,0,e_0)\rangle, \dots, \langle \bar{\Z}_0, \cc(t,0,e_N)\rangle)Z_t^2 \Big)^{\frac{1}{2}}dW_t 
\end{align}
where $Z_0=z_0\in \R$ and $\cc: \Delta\times \R^{N+1}\rightarrow \R^{N+1}$ is continuous and $(e_0, \dots, e_N)$ the canonical basis of $\R^{N+1}$. To simplify notation and improve readability, let us write for all $t\geq 0$
\begin{equation*}
\mathbf{vec}^{\Z_0,\cc}(t) := \begin{pmatrix} \langle \bar{\Z}_0, \cc(t,0,e_0)\rangle\\ \langle \bar{\Z}_0, \cc(t,0,e_1)\rangle\\ \vdots\\ \langle \bar{\Z}_0, \cc(t,0,e_N)\rangle
\end{pmatrix}.
\end{equation*}
In this notation,~\eqref{eq:dual_SDE} reads: 
\begin{align} \label{eq:dual_sde_alt}
dZ_t&= \left(\tilde{b}\big(\mathbf{vec}^{\Z_0,\cc}(t)\big)+ \tilde{\beta}\big(\mathbf{vec}^{\Z_0,\cc}(t)\big)Z_t\right)dt \nonumber  \\&\quad+\sqrt{\tilde{c}\big(\mathbf{vec}^{\Z_0,\cc}(t)\big)+ \tilde{\gamma}\big(\mathbf{vec}^{\Z_0,\cc}(t)\big)Z_t + \tilde{\Gamma}\big(\mathbf{vec}^{\Z_0,\cc}(t)\big)Z_t^2 }dW_t 
\end{align}

For such an SDE, we can now formulate the following theorem:\\

\begin{theorem}\label{thm:dual_moment_formula}
    
Assume that a unique solution to the martingale problem associated with \eqref{eq:dual_SDE} exists and that the maps $\cc(\cdot)$ are the unique solution to the following ODEs
\begin{align}\label{eq:fwd_ode_c}
 \frac{\d}{\d t}\begin{pmatrix} \cc(t,0,e_0), \dots,\cc(t,0,e_N)\end{pmatrix} =  \begin{pmatrix} \cc(t,0,e_0), \dots,\cc(t,0,e_N)\end{pmatrix} \cdot \tilde{\mathbf{H}}(\mathbf{vec}^{\Z_0, \cc}(t))
\end{align}
\begin{align}\label{eq:bwd_ode_c}
 \frac{\d}{\d t}\begin{pmatrix} \cc(T,t,e_0), \dots,\cc(T,t,e_N)\end{pmatrix} = -\tilde{\mathbf{H}}(\mathbf{vec}^{\Z_0, \cc}(t))  \cdot \begin{pmatrix} \cc(T,t,e_0), \dots,\cc(T,t,e_N)\end{pmatrix}
\end{align}
with initial condition $\cc(0, 0, e_i)=e_i$ terminal condition $\cc(T,T, e_i)=e_i$ and where the matrix $\tilde{\mathbf{H}}$ 
is given by 
\begin{equation*}
\tilde{\mathbf{H}}(\mathbf{vec}^{\Z_0,\cc}(t))_{kj}= \begin{cases} \vspace{0.3cm} j\cdot \tilde{\beta}\left(\mathbf{vec}^{\Z_0,\cc}(t)\right) + \frac{j(j-1)}{2}\tilde{\Gamma}\left(\mathbf{vec}^{\Z_0,\cc}(t)\right), \,&\textrm{ if } k=j, \, j\neq 0 \\ \vspace{0.3cm}
j\cdot \tilde{b}\left(\mathbf{vec}^{\Z_0,\cc}(t)\right)+ \frac{j(j-1)}{2}\tilde{\gamma}\left(\mathbf{vec}^{\Z_0,\cc}(t)\right), \,& \textrm{ if }j=k+1 \\ \vspace{0.3cm}
\frac{j(j-1)}{2}\tilde{c}\left(\mathbf{vec}^{\Z_0,\cc}(t)\right), \,& \textrm{ if }j=k+2\\
0 &\textrm{ else }
\end{cases}
\end{equation*}
for $j,k = 0, \dots, N$. %Let it holds that 

Setting for $u= (u_0, \dots, u_N)^{\mathsf{T}} \in \R^{N+1}$, $\cc(t,s, u) := \sum_{i=0}^N u_i \cc(t,s,e_i)$ for all $0\leq s\leq t\leq T$,  then $$\mathcal{M}_t:=\langle \bar{\Z}_t, \cc(T,t,u)\rangle $$ is  a martingale on $[0,T]$ for all $T\geq 0$ for all $u\in \R^{N+1}$ and therefore 
\begin{equation*}
\E[\mathcal{M}_T\lvert \mathcal{F}_t]= \E[\langle \bar{\Z}_T, u\rangle \lvert \mathcal{F}_t]= \langle \bar{\Z}_t, \cc(T,t,u)\rangle.
\end{equation*}
\end{theorem}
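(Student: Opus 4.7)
\medskip

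The strategy is to apply Itô's product rule to $\mathcal{M}_t = \bar{\Z}_t^{\top} \cc(T,t,u)$ and have the dynamics of $\bar{\Z}_t$ generated by $\tilde{\mathbf{H}}$ cancel exactly with the backward ODE~\eqref{eq:bwd_ode_c} satisfied by $\cc(T,\cdot,u)$. The computation splits naturally into three steps: (i) identify the semimartingale decomposition of $\bar{\Z}_t$ in terms of the generator matrix $\tilde{\mathbf{H}}$; (ii) perform Itô and invoke \eqref{eq:bwd_ode_c} to conclude that $\mathcal{M}$ is a local martingale; (iii) upgrade this to the true martingale property.

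For step (i), applying Itô's formula to $Z_t^k$ for $k\in\{0,\ldots,N\}$ using \eqref{eq:dual_sde_alt} yields, after collecting the coefficients of $Z_t^{k-2}, Z_t^{k-1}, Z_t^k$, that the drift of $Z_t^k$ equals
\begin{equation*}
\tfrac{k(k-1)}{2}\tilde{c}\, Z_t^{k-2} + \bigl(k\tilde{b} + \tfrac{k(k-1)}{2}\tilde{\gamma}\bigr) Z_t^{k-1} + \bigl(k\tilde{\beta} + \tfrac{k(k-1)}{2}\tilde{\Gamma}\bigr) Z_t^k,
\end{equation*}
with all coefficients evaluated at $\mathbf{vec}^{\Z_0,\cc}(t)$. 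Comparing with the definition of $\tilde{\mathbf{H}}(\mathbf{vec}^{\Z_0,\cc}(t))_{kj}$ and using the convention \eqref{eq:transform_operator}, this is exactly the $k$-th entry of $\tilde{\mathbf{H}}(\mathbf{vec}^{\Z_0,\cc}(t))^{\top}\bar{\Z}_t$, so that
\begin{equation*}
d\bar{\Z}_t = \tilde{\mathbf{H}}(\mathbf{vec}^{\Z_0,\cc}(t))^{\top}\bar{\Z}_t\, dt + d\mathcal{N}_t
\end{equation*}
for a continuous local martingale $\mathcal{N}_t$ whose $k$-th component is $\int_0^t k \sqrt{\tilde{c}+\tilde{\gamma}Z_s+\tilde{\Gamma}Z_s^2}\, Z_s^{k-1}\, dW_s$.

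For step (ii), since $t\mapsto \cc(T,t,u) = \sum_{i=0}^N u_i\, \cc(T,t,e_i)$ is deterministic and continuously differentiable (it solves \eqref{eq:bwd_ode_c}), it is of finite variation and has no quadratic covariation with $\bar{\Z}$. The Itô product rule therefore gives
\begin{equation*}
d\mathcal{M}_t = \bar{\Z}_t^{\top}\tilde{\mathbf{H}}(\mathbf{vec}^{\Z_0,\cc}(t))\cc(T,t,u)\, dt + \bar{\Z}_t^{\top}\tfrac{d}{dt}\cc(T,t,u)\, dt + \cc(T,t,u)^{\top} d\mathcal{N}_t.
\end{equation*}
By linearity, \eqref{eq:bwd_ode_c} applied to the linear combination defining $\cc(T,t,u)$ gives $\tfrac{d}{dt}\cc(T,t,u) = -\tilde{\mathbf{H}}(\mathbf{vec}^{\Z_0,\cc}(t))\cc(T,t,u)$, so the two drifts cancel and $\mathcal{M}$ is a continuous local martingale on $[0,T]$. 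Combined with the terminal condition $\cc(T,T,e_i)=e_i$, once the true-martingale property is established this immediately yields $\E[\langle\bar{\Z}_T,u\rangle\mid\mathcal{F}_t] = \E[\mathcal{M}_T\mid\mathcal{F}_t] = \mathcal{M}_t = \langle\bar{\Z}_t,\cc(T,t,u)\rangle$.

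The main obstacle is step (iii), the upgrade from local to true martingale. Since $t\mapsto \cc(T,t,u)$ is continuous on the compact interval $[0,T]$, it is bounded; the delicate point is uniform integrability of $\cc(T,t,u)^{\top}\mathcal{N}_t$, which requires controlling $\E\bigl[\int_0^T (\tilde{c}+\tilde{\gamma}Z_s+\tilde{\Gamma}Z_s^2) Z_s^{2(k-1)} ds\bigr]$ for $k=1,\ldots,N$. Here I would argue exactly as in Lemma~\ref{lem:new_ass_maps}: because the time-dependent coefficients in \eqref{eq:dual_sde_alt} are continuous compositions of $\tilde b,\tilde\beta,\tilde c,\tilde\gamma,\tilde\Gamma$ with the continuous curve $t\mapsto \mathbf{vec}^{\Z_0,\cc}(t)$, they are bounded on $[0,T]$, so standard BDG/Gr\"onwall estimates applied to the polynomial diffusion \eqref{eq:dual_sde_alt} yield $\sup_{t\in[0,T]}\E[|Z_t|^{2N}] < \infty$. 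This is sufficient to ensure that each stochastic integral in $\mathcal{N}$ is a true square-integrable martingale, and hence so is $\mathcal{M}$.
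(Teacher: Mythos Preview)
Your argument is correct and takes a genuinely different route from the paper's proof. The paper works abstractly through the polynomial-process machinery: it notes that the solution to the martingale problem is a time-inhomogeneous polynomial diffusion, so the transition operators $\mathbf{P}_{t,T}$ are well defined on polynomials, sets $\cc^*(T,t,u):=\mathbf{P}_{t,T}u$, and then uses the Kolmogorov forward/backward equations (Lemma~\ref{lem:intro_fwd_bwd}) to show that $\cc^*$ satisfies exactly the ODEs~\eqref{eq:fwd_ode_c}--\eqref{eq:bwd_ode_c}; the assumed \emph{uniqueness} of solutions then forces $\cc=\cc^*$, and the moment formula follows from the very definition of $\mathbf{P}_{t,T}$. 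Your approach bypasses the transition-operator formalism entirely: you compute $d\bar{\Z}_t$ directly, use the backward ODE to cancel the drift in $d\mathcal{M}_t$, and then upgrade the resulting local martingale via moment bounds. This is more elementary and, notably, uses only that $\cc(T,\cdot,u)$ \emph{satisfies} the backward ODE --- not that it is the unique solution, nor the forward equation~\eqref{eq:fwd_ode_c} at all --- whereas the paper's identification step hinges on uniqueness. Conversely, the paper's route has the advantage that the existence of all moments (and hence the martingale property) is packaged into Proposition~\ref{prop:poly_diffusion}, so no separate integrability argument is needed.

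One small remark on your step~(iii): invoking Lemma~\ref{lem:new_ass_maps} verbatim is slightly misleading, since that lemma is formulated under Assumptions~\ref{as:A} or~\ref{as:B}, which are not part of the hypotheses here. Your actual argument is simpler and self-contained: because $t\mapsto\mathbf{vec}^{\Z_0,\cc}(t)$ is continuous (it solves an ODE), the time-dependent coefficients in~\eqref{eq:dual_sde_alt} are bounded on $[0,T]$, and the localization\,+\,BDG\,+\,Gronwall estimate for $\E[|Z_{t\wedge\tau_n}|^{2N}]$ goes through with \emph{constant} bounds on the coefficients, which is the classical linear-growth case. It would be cleaner to state this directly rather than pointing to Lemma~\ref{lem:new_ass_maps}.
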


The proof of Theorem~\ref{thm:dual_moment_formula} is given in Appendix~\ref{app:dual_moment_formula}.

\begin{remark}
    Considering the SDEs given by~\ref{eq:dual_SDE} under the assumptions of Theorem~\ref{thm:dual_moment_formula}, then these SDEs are in fact of McKean-Vlasov type, since the coefficients $\tilde{b}, \tilde{\beta}, \tilde{c}, \tilde{\gamma}, \tilde{\Gamma}$ depend on the moments of $(Z_t)_{t\geq 0}$ due to 
    $$
\langle \bar{\mathbb{Z}}_0,\cc(t,0,e_i)\rangle= \mathbb{E}[Z_t^i], \quad i \in \{0, \ldots, N\}.$$
    Relating this to the vectorized notation, this means that $\mathbf{vec}^{\Z_0,\cc}(t)_i = \E[Z^i_t]$ for all $t\in [0, \infty)$ and $0\leq i \leq N$.
    However, we have neither established if and when (global) solutions for the ODEs~\eqref{eq:fwd_ode_c} and \eqref{eq:bwd_ode_c} exist, nor if and when we have a unique solution to the associated martingale problem. 
    Both of which will impose conditions on the maps $\tilde{b}, \tilde{\beta}, \tilde{c}, \tilde{\gamma}, \tilde{\Gamma}$. Note that this the existence of solutions to~\eqref{eq:bwd_ode_c} and~\eqref{eq:fwd_ode_c} is far from trivial, because $\mathbf{vec}^{\Z_0, \cc}(t)$ depends on all of $c(t, 0, e_0), \dots, c(t, 0, e_N)$. 
    
    At this point, we would also like to emphasize again the connection between $\tilde{\mathbf{H}}$ and $\mathbf{L}$, namely that structurally  $\tilde{\mathbf{H}}= \mathbf{L}^{\mathsf{T}}$, which should be even more clear now. 
\end{remark}

\begin{proposition}\label{prop:existence_dual_ode}
   Assume all the maps $$\tilde{b}, \tilde{\beta}, \tilde{c}, \tilde{\gamma}, \tilde{\Gamma}: \R^{N+1} \rightarrow \R$$ to be locally Lipschitz-continuous on some set $A$ such that $\Z_0\in A$. Then a unique solution to \eqref{eq:fwd_ode_c}, \eqref{eq:bwd_ode_c} exists up to its maximal lifetime. Moreover, if $A=\R^N$ and the maps $\tilde{b}, \tilde{\beta}, \tilde{c}, \tilde{\gamma}, \tilde{\Gamma}$ either satisfy the set of Assumptions~\ref{as:A} or the set of Assumptions~\ref{as:B}, then the unique solution to \eqref{eq:fwd_ode_c} is global and hence the same holds for the solution to \eqref{eq:bwd_ode_c}. If, in addition to $A=\R^N$ and Assumptions~\ref{as:A} or~\ref{as:B}, the term under the square-root in~\eqref{eq:dual_SDE} remains non-negative, there exists a unique strong solution to~\eqref{eq:dual_SDE}.
\end{proposition}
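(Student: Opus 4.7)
The plan is to decouple the problem into three steps. First, to establish local existence and uniqueness for \eqref{eq:fwd_ode_c}, I would regard the system as a closed autonomous ODE for the $(N{+}1)\times(N{+}1)$ matrix $\mathbf{C}(t):=(\cc(t,0,e_0),\ldots,\cc(t,0,e_N))$, whose right-hand side reads $\mathbf{C}(t)\cdot\tilde{\mathbf{H}}(\mathbf{C}(t)^{\mathsf{T}}\bar{\Z}_0)$. Because $\mathbf{vec}^{\Z_0,\cc}(t)=\mathbf{C}(t)^{\mathsf{T}}\bar{\Z}_0$ depends affinely on $\mathbf{C}$ and $\tilde{\mathbf{H}}$ is built polynomially from $\tilde{b},\tilde{\beta},\tilde{c},\tilde{\gamma},\tilde{\Gamma}$ evaluated at this affine argument, the right-hand side is locally Lipschitz in $\mathbf{C}$ near $\mathbf{C}(0)=I$. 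Picard--Lindel\"of (Theorem~\ref{thm:sol_ode}) then yields a unique maximal solution.

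Second, to upgrade to a global solution under Assumptions~\ref{as:A} or~\ref{as:B}, I would exploit the identity $\tilde{\mathbf{H}}=\mathbf{L}^{\mathsf{T}}$ pointed out right after Theorem~\ref{thm:dual_moment_formula}. Inspecting the definition of $\tilde{\mathbf{H}}$, its zeroth column vanishes identically, so the matrix ODE preserves the zeroth column of $\mathbf{C}$; hence $\cc(t,0,e_0)=e_0$ for all $t$ and $\mathbf{vec}^{\Z_0,\cc}(t)_0=1$. Transposing the matrix ODE and multiplying by $\bar{\Z}_0$, the vector $z(t):=\mathbf{vec}^{\Z_0,\cc}(t)$ is seen to solve
\begin{equation*}
z'(t)=\tilde{\mathbf{H}}(z(t))^{\mathsf{T}} z(t)=\mathbf{L}(z(t))\,z(t),\qquad z(0)=\bar{\Z}_0,
\end{equation*}
which is precisely the primal ODE~\eqref{eq:ode_L}. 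Under either set of assumptions, Proposition~\ref{prop:global_sol_ode} yields the global existence of $z(t)$, so $\tilde{\mathbf{H}}(\mathbf{vec}^{\Z_0,\cc}(t))$ becomes a known continuous time-dependent matrix. The equation for $\mathbf{C}$ then reduces to a linear ODE with continuous coefficients, whose solution cannot explode in finite time, proving global existence of~\eqref{eq:fwd_ode_c}. The backward equation~\eqref{eq:bwd_ode_c} is likewise a linear ODE with the same continuous coefficient matrix, so its unique global solvability is automatic.

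Third, once $\mathbf{vec}^{\Z_0,\cc}(t)$ is globally defined, I would obtain existence and uniqueness of a strong solution to~\eqref{eq:dual_SDE} by transcribing the argument from the proof of Theorem~\ref{thm:poly_MV}. The growth bounds on $z(t)$ derived in the proof of Proposition~\ref{prop:global_sol_ode}, combined with Assumptions~\ref{as:A} or~\ref{as:B}, verify conditions~\eqref{eq:xb_growth}--\eqref{eq:sigma_growth} of Theorem~\ref{thm:sol_MP}; the non-negativity hypothesis on the term under the square root supplies the positive maximum principle via Proposition~\ref{prop:pmp}; and local Lipschitz continuity of $\tilde{b},\tilde{\beta},\tilde{c},\tilde{\gamma},\tilde{\Gamma}$ is inherited by the coefficients of the resulting time-inhomogeneous SDE, yielding pathwise uniqueness. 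Theorem~\ref{thm:sol_MP} then delivers the unique strong solution.

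The principal obstacle I anticipate is the second step: recognizing that $\mathbf{vec}^{\Z_0,\cc}(t)$ satisfies a self-contained sub-ODE isomorphic to~\eqref{eq:ode_L}. Without this reduction the full matrix system for $\mathbf{C}$ could a priori blow up even though its contraction with $\bar{\Z}_0$ remains bounded. Once this reduction is in place, the remaining globality is classical linear ODE theory and the SDE construction is a direct transcription of the primal argument.
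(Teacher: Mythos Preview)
Your proposal is correct and follows essentially the same route as the paper: both arguments first extract the self-contained ODE for $\mathbf{vec}^{\Z_0,\cc}(t)$, identify it with the primal system~\eqref{eq:ode_L} via $\tilde{\mathbf{H}}=\mathbf{L}^{\mathsf{T}}$, invoke Proposition~\ref{prop:global_sol_ode} for global existence, and then treat the remaining forward/backward equations as linear ODEs with known continuous coefficients before appealing to the proof of Theorem~\ref{thm:poly_MV} for the SDE. Your phrasing of the linearity step is arguably cleaner than the paper's (which instead bounds $\|\cc(t,0,e_i)\|$ via the upper-triangular structure and Gronwall), but the substance is the same.
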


\begin{proof}
Let us fix an arbitrary $Z_0 \in \R$ without loss of generality and note that, using~\eqref{eq:fwd_ode_c} we obtain the following ODE for $\mathbf{vec}^{\Z_0, \cc}$: 

\begin{equation}\label{eq:ode_vec}
  \frac{\d}{\d t} \mathbf{vec}^{\Z_0, \cc}(t)= \tilde{\mathbf{H}}(\mathbf{vec}^{\Z_0, \cc}(t))^{\mathsf{T}}\cdot \mathbf{vec}^{\Z_0, \cc}(t) \textrm{ with } \mathbf{vec}^{\Z_0, \cc}(0)=\bar{\Z}_0.  
\end{equation}

The existence of a unique solution to~\eqref{eq:ode_vec} up to its maximal lifetime is assured by the local Lipschitz conditions. Note, that~\eqref{eq:ode_vec} is the same system of ODEs as the one given in~\eqref{eq:ode_L} by the fact that $\mathbf{H}^{\mathsf{T}}= \mathbf{L}$. Hence, the existence of a global solution to~\eqref{eq:ode_vec} under the assumption that $A=\R^N$ and either Assumptions~\ref{as:A} or~\ref{as:B} being fulfilled, follows by an application of Proposition~\ref{prop:global_sol_ode}. Having obtained a solution to~\eqref{eq:ode_vec} given by $\big(\mathbf{vec}^{\Z_0, \cc}(t)\big)_{t\in [0, \infty)}$ we can plug this solution in to $\tilde{\mathbf{H}}(\cdot)$ to study~\eqref{eq:fwd_ode_c} and~\eqref{eq:bwd_ode_c} respectively to solve for $\big((\cc(T,t,e_i))_{i=0}^{N}\big)_{T\geq t\geq 0}$. Indeed, it holds that the right-hand-side of \eqref{eq:fwd_ode_c} is Lipschitz-continuous in $\big((\cc(t,0,e_i))_{i=0}^N\big)_{t\in [0, \infty)}$. To see this, recall the form of~\eqref{eq:fwd_ode_c}. Having obtained that $\langle \bar{\Z}_0, \cc(t,0,e_i)\rangle < \infty$ for all $i=0, \dots, N$ and $t\in \R_{\geq 0}$ for arbitrary $Z_0\in \R$, it follows that $\|\cc(t,0,e_i)\| < \infty $ all $i=0, \dots, N$ and $t\in \R_{\geq 0}$.
Similarly, for the backward ODE, we can simply plug in $(\tilde{\mathbf{H}}(\mathbf{vec}^{\Z_0, \cc}(t)))_{0\leq t \leq T}$ and recalling \eqref{eq:bwd_ode_c} it is clearly Lipschitz again, due to the bounds on $\tilde{\mathbf{H}}(\mathbf{vec}^{\Z_0, \cc}(t))$. Since $\tilde{\mathbf{H}}(\mathbf{vec}^{\Z_0, \cc}(t))$ is upper-triangular, one can recursively bound the components of $\cc(T, t, e_i)$ for each $i=0, \dots, N$. More precisely, for each $i=0, \dots, N$ apply Gronwall's inequality first on $\cc(T,t,e_i)_N$ (i.e. the $N$-th component of $\cc(T,t,e_i)$) using the bound on the coefficients of $\tilde{\mathbf{H}}(\cdot)$ and next repeat the same for $\cc(T,t,e_i)_{N-1}$ and so on. We have therefore shown that a global unique solution exists to the forward and backward ODE. 

Let us now turn to the existence of a solution to the martingale problem and path-wise uniqueness. Let us note by~\eqref{eq:ode_vec} and the fact that in this notation~\eqref{eq:dual_SDE} corresponds to~\eqref{eq:dual_sde_alt}, we are in fact exactly in the same situation as in the proof of Theorem~\ref{thm:poly_MV}. Hence, the arguments for existence and path-wise uniqueness of a solution are exactly those presented in the proof of Theorem~\ref{thm:poly_MV}.
\end{proof}

\begin{corollary}
As in the primal case, we can study the existence of solution on other state-spaces than $S=\R$. Sufficient conditions for the positive maximum principle to hold on the respective state-spaces are: 
\begin{itemize}
\item for state-space $S= \R_{\geq 0}$:  $\tilde{b}: \R_{\geq 0} \rightarrow \R_{\geq 0}$ and $\tilde{c}\equiv 0$, $\tilde{\gamma}, \tilde{\Gamma}: \R_{\geq 0} \rightarrow \R_{\geq 0}$.

\item for the case of $S = [0,1]$:  $\tilde{b}: \R_{\geq 0} \rightarrow \R_{\geq 0}$, $\tilde{b} + \tilde{\beta}: \R_{\geq 0} \rightarrow \R_{\leq 0}$, and $\tilde{c}\equiv 0$, $\tilde{\gamma} \equiv -\tilde{\Gamma} \geq 0$.  
\end{itemize}
\end{corollary}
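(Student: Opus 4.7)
The plan is to mirror the argument of Corollary~\ref{coro:poly_MV_S} in the dual setting: under the hypotheses of Proposition~\ref{prop:existence_dual_ode}, the map $t\mapsto \mathbf{vec}^{\Z_0,\cc}(t)$ is continuous (in fact $C^1$), so~\eqref{eq:dual_SDE} can be regarded as a time-inhomogeneous SDE with continuous coefficients
\begin{align*}
B(t,z) &= \tilde b\bigl(\mathbf{vec}^{\Z_0,\cc}(t)\bigr) + \tilde\beta\bigl(\mathbf{vec}^{\Z_0,\cc}(t)\bigr)\,z, \\
\Sigma^2(t,z) &= \tilde c\bigl(\mathbf{vec}^{\Z_0,\cc}(t)\bigr) + \tilde\gamma\bigl(\mathbf{vec}^{\Z_0,\cc}(t)\bigr)\,z + \tilde\Gamma\bigl(\mathbf{vec}^{\Z_0,\cc}(t)\bigr)\,z^2.
\end{align*}
Proposition~\ref{prop:pmp} then reduces the claim to verifying the stated sign conditions at the boundary of $S$ together with non-negativity of $\Sigma^2$ on $S$.

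For $S = \R_{\geq 0}$, I would evaluate the two quantities at $z=0$: the assumption $\tilde c \equiv 0$ gives $\Sigma^2(t,0) = 0$, while $\tilde b \geq 0$ gives $B(t,0) \geq 0$. This matches part (ii) of Proposition~\ref{prop:pmp}. Non-negativity of $\Sigma^2$ for $z \geq 0$ is immediate from $\tilde\gamma,\tilde\Gamma \geq 0$ and $\tilde c \equiv 0$, so the term under the square root is well defined on the state space.

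For $S = [0,1]$, the boundary now has two points. At $z=0$ the same computation as above works. At $z=1$, the condition $\tilde\gamma \equiv -\tilde\Gamma$ together with $\tilde c \equiv 0$ yields $\Sigma^2(t,1) = \tilde\gamma + \tilde\Gamma = 0$, and $\tilde b + \tilde\beta \leq 0$ gives $B(t,1) \leq 0$, so part (iii) of Proposition~\ref{prop:pmp} applies. For the interior, one rewrites $\Sigma^2(t,z) = \tilde\gamma(\mathbf{vec}^{\Z_0,\cc}(t))\,z(1-z)$, which is non-negative on $[0,1]$ since $\tilde\gamma \geq 0$.

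There is no real obstacle here: the argument is a direct structural translation of the primal proof of Corollary~\ref{coro:poly_MV_S} to~\eqref{eq:dual_SDE}, with the time-dependence entering exclusively through the deterministic path $t\mapsto \mathbf{vec}^{\Z_0,\cc}(t)$. The only point that requires a brief remark is that continuity of this path (needed to apply Proposition~\ref{prop:pmp} and, subsequently, Theorem~\ref{thm:sol_MP}) is already guaranteed by Proposition~\ref{prop:existence_dual_ode} whenever the ODE system~\eqref{eq:fwd_ode_c} admits a (global) solution.
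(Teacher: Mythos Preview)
Your proposal is correct and follows exactly the approach the paper takes: its proof consists of the single line ``completely analogous to the one of Corollary~\ref{coro:poly_MV_S}'', and you have simply written out what that analogy amounts to, reducing the dual SDE~\eqref{eq:dual_SDE} to a time-inhomogeneous SDE via the deterministic path $t\mapsto\mathbf{vec}^{\Z_0,\cc}(t)$ and then checking the boundary conditions of Proposition~\ref{prop:pmp}.
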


\begin{proof}
The proof is completely analogous to the one of Corollary~\ref{coro:poly_MV_S}.

\end{proof}

\begin{remark}\label{rem:diff_primal_dual}
Let us make the following comment on the difference between the primal and dual point of view: although we have ultimately used the same assumptions to establish existence of solutions, the derivation was profoundly different. In the primal case, we have required the set of Assumptions~\ref{as:A} or~\ref{as:B} \emph{to obtain} the system of ODEs~\eqref{eq:ode_L}. It then turned out, that these growth-assumptions are also sufficient conditions for the existence of global solutions. In the dual case, however, we obtained the system of ODEs~\eqref{eq:ode_vec} via the Kolmogorov forward and backward equations. We had thus far not imposed any growth-conditions. Those only came into play as sufficient conditions for the existence of global solutions to~\eqref{eq:ode_vec}. Although this difference may seem subtle, the following SDE shows it is not: 
\begin{equation}
dZ_t = \sqrt{\E[\Z^2_t]Z_t} dW_t \textrm{ with } Z_0=z>0.
\end{equation}
Clearly, $\gamma(x)= x_2$ violates both Assumptions~\ref{as:A} and~\ref{as:B}. Moreover, our approach to establish~\eqref{eq:ode_L} in the primal approach via Gronwall's inequalities fails. However, in the dual case, neither Assumptions~\ref{as:A} nor~\ref{as:B} are required to find~\ref{eq:ode_vec}. And although, in this case, $\gamma$ does not satisfy the sufficient conditions for the existence of a global solution, it is important to note that these are not necessary conditions. Indeed, in this case $N=2$ and the system of ODEs boils down to: 
\begin{align*}
\frac{\d}{\d t} (\mathbf{vec}^{\Z_0, \cc}(t))_1 &= 0 \textrm{ with } (\mathbf{vec}^{\Z_0, \cc}(0))_1 = z \\
\frac{\d}{\d t} (\mathbf{vec}^{\Z_0, \cc}(t))_2 &=
(\mathbf{vec}^{\Z_0, \cc}(t))_2 (\mathbf{vec}^{\Z_0, \cc}(t))_1 \textrm{ with } (\mathbf{vec}^{\Z_0, \cc}(0))_2 = z^2
\end{align*}
which admits a global unique solution 
$$ \mathbf{vec}^{\Z_0, \cc}(t) = \begin{pmatrix} z \\ z^2 e^{zt}
\end{pmatrix}.
$$
Using this it is straightforward to solve for $\cc$. Moreover, for the state-space $S= \R_{\geq 0}$ the PMP holds, and we can apply Theorem~\ref{thm:sol_MP} for $h(t)= e^{zt}$ to establish existence of path-wise unique solutions. \emph{Ex post} we also find that in this example $(Z_t)_{t\in [0, \infty)}$ is indeed a true martingale, due to the fact that we have an explicit expression for $(\E[Z^2_t])_{t\in [0, \infty)}$ and can therefore check square-integrability. 

Having found an explicit expression for the moments by solving the above ODE, we can conclude \emph{a posteriori} that the process is a true martingale, due to square integrability. 
\end{remark}

\subsection{Adding common noise}\label{subsec:common_noise}
 
In this section, we study a class of SDEs which include an additional \emph{common noise} term $( + \dots dW_t^0)$ and whose coefficients depend on the conditional expectation of the process, that is, conditional on the common noise. Again, we require a form which resembles that of polynomial processes to study this family of SDEs. Let us make this more precise in the following definition: 

\begin{definition}[Conditional Polynomial McKean-Vlasov SDEs]
Let $(W_t)_{t\geq 0}$ and $(W^0_t)_{t\geq 0}$ denote two independent Brownian motions and denote by $\Wcal^0_t$ the $\sigma$-algebra generated by $W^0_t$ for each $t\geq 0$. We slightly abuse notation and write for all $t\geq 0$
$$\E[\Z_t \lvert \Wcal^0_t] := (\E[Z_t \lvert \Wcal^0_t], \dots, \E[Z^N_t \lvert \Wcal^0_t])$$ for some arbitrary but fixed $N>0$ and some process $(Z_t)_{t\geq0}$. We call the following SDE of \emph{conditional polynomial McKean-Vlasov type}:
\begin{align}\label{eq:cond_poly_MV}
\nonumber dZ_t = & \Big(b(\E[\Z_t \lvert \Wcal^0_t]) + \beta(\E[\Z_t \lvert \Wcal^0_t]) Z_t \Big)dt \\ & + \sqrt{c(\E[\Z_t \lvert \Wcal^0_t]) + \gamma(\E[\Z_t \lvert \Wcal^0_t]) Z_t + \Gamma(\E[\Z_t \lvert \Wcal^0_t]) Z_t^2} dW_t\\ \nonumber
& + \Big(l(\E[\Z_t \lvert \Wcal^0_t]) + \Lambda(\E[\Z_t \lvert \Wcal^0_t]) Z_t \Big)dW^0_t
\end{align}
where $b, \beta, c, \gamma, \Gamma, l, \Lambda: \R^N \rightarrow \R$ are some continuous maps. 
\end{definition}

We would like to point out that a particular case of such a conditional polynomial McKean-Vlasov SDE has been studied in~\cite{Huber_2024} in the context of market capitalizations curve modeling. 

The idea to approach this class of processes is similar to the primal approach to polynomial McKean-Vlasov SDEs of Section \ref{subsec:primal}, where we used It\^o's lemma to find an ODE for $\E[\Z_t]$ and sufficient conditions for the existence of solutions to this system ODEs. Here, we aim to describe $\E[\Z_t\lvert \Wcal_t^0]$ by a system of SDEs via It\^o's lemma and determine sufficient conditions for the existence of solutions. 

Before we derive this system of SDEs, let us state the following lemma, %which will prove useful. We do not claim Lemma~\ref{lem:fubini_type} to be a new result, but nevertheless state it 
 for which we present a proof in the appendix to to be self-contained (but of course it is not a new result).

\begin{lemma}\label{lem:fubini_type}
Let $(W_t)_{t\geq 0}$ and $(W^0_t)_{t\geq 0}$ be two independent Brownian motions, and write $(\Wcal^0_t)_{t\geq0}$ for the filtration generated by $(W^0_t)_{t\geq 0}$. Moreover, denote by $\mathbb{F}:= (\Fcal_t)_{t\geq 0}$ the filtration generated by both Brownian motions jointly. Then, for any locally square-integrable, $\mathbb{F}$-adapted  process $(H_t)_{t\geq 0}$ it holds that  
$$\E\left[ \int_0^t H_s dW_s  \Big\lvert \Wcal_t^0\right]= 0 \quad \textrm{ and } \quad  \E\left[ \int_0^t H_s dW^0_s  \Big\lvert \Wcal_t^0\right]= \int_0^t \E\left[ H_s \big\lvert \Wcal_t^0\right]dW^0_s   .$$
\end{lemma}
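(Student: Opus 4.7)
The plan is to establish both identities first for elementary $\mathbb{F}$-adapted processes, where they reduce to direct calculations based on the independence of $W$ and $W^0$, and then extend to general locally square-integrable $H$ by an $L^2$-density argument combined with localization.

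For an elementary process $H_s = \sum_{i=0}^{n-1}\xi_{t_i}\mathbf{1}_{(t_i,t_{i+1}]}(s)$ with bounded $\xi_{t_i}\in \Fcal_{t_i}$, the stochastic integrals become finite sums. For the first identity, condition on $\Fcal_{t_i}\vee \Wcal^0_t$ and note that $W_{t_{i+1}}-W_{t_i}$ remains a mean-zero Gaussian independent of this $\sigma$-algebra by independence of the two Brownian motions; each summand in $\E[\int_0^t H_s\,dW_s \mid \Wcal^0_t]$ therefore vanishes by the tower property. For the second, $W^0_{t_{i+1}}-W^0_{t_i}$ is $\Wcal^0_t$-measurable, so
$$\E\big[\xi_{t_i}(W^0_{t_{i+1}}-W^0_{t_i}) \mid \Wcal^0_t\big] = \E[\xi_{t_i}\mid \Wcal^0_t]\,(W^0_{t_{i+1}}-W^0_{t_i}).$$
A key step is the observation that independence of $W$ and $W^0$ forces the future $W^0$-increments $W^0_u-W^0_{t_i}$, $u>t_i$, to be independent of $\Fcal_{t_i}\ni \xi_{t_i}$, hence $\E[\xi_{t_i}\mid \Wcal^0_t]=\E[\xi_{t_i}\mid \Wcal^0_{t_i}]$. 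Summing over $i$ then identifies the right-hand side with $\int_0^t \E[H_s\mid \Wcal^0_s]\,dW^0_s$, a well-defined It\^o integral against the $(\Wcal^0_s)$-Brownian motion $W^0$.

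The extension to an $\mathbb{F}$-adapted $H$ with $\E\int_0^t H_s^2\,ds<\infty$ proceeds via the standard density of elementary processes in $L^2(\Omega\times[0,t])$. It\^o isometry provides $L^2(\P)$-convergence of both stochastic integrals, and conditional expectation is an $L^2$-contraction, so both identities pass to the limit. For the right-hand side of the second identity, conditional Jensen's inequality yields $\E\big[(\E[H^n_s\mid \Wcal^0_s]-\E[H_s\mid \Wcal^0_s])^2\big]\le \E[(H^n_s-H_s)^2]$, and a further application of It\^o isometry identifies the limit as $\int_0^t \E[H_s\mid \Wcal^0_s]\,dW^0_s$. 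The case of merely locally square-integrable $H$ then follows from a standard localizing sequence $\tau_n\uparrow \infty$.

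The only subtle point, which I expect to be the main obstacle to pin down carefully, is ensuring that $s\mapsto \E[H_s\mid \Wcal^0_s]$ is $(\Wcal^0_s)$-progressively measurable so that its stochastic integral against $W^0$ is meaningful; this is guaranteed precisely by the independence-driven reduction $\E[H_s\mid \Wcal^0_t]=\E[H_s\mid \Wcal^0_s]$ for $s\le t$, which is the engine of the whole argument.
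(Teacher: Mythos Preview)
Your argument is correct and follows the same strategy as the paper: verify both identities on simple processes via the tower property and measurability of the $W^0$-increments, then pass to the $L^2$-limit using the It\^o isometry and contractivity of conditional expectation. One point you make explicit that the paper leaves implicit is the reduction $\E[H_s\mid\Wcal^0_t]=\E[H_s\mid\Wcal^0_s]$, which is precisely what is needed to interpret the right-hand side $\int_0^t \E[H_s\mid\Wcal^0_t]\,dW^0_s$ as a bona fide It\^o integral with an adapted integrand.
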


We are now ready to derive the system of SDEs which is fulfilled by $\E[\Z_t \lvert \Wcal_t^0]$. Note that, we do not state anything about existence of solutions yet, but postpone this to Proposition~\ref{prop:joint_poly_1},~\ref{prop:joint_poly_2} and Theorem~\ref{thm:cond_existence_MP} and the discussion thereafter.

Similar to the case without common noise, we can formulate assumptions under which we can establish existence of and uniqueness of solutions to~\eqref{eq:cond_poly_MV}. Note that in the case with common-noise, however, we only have one set of assumptions. This is because the procedure of first bounding the first component, as we did for the set of Assumptions~\ref{as:A} does not work in the case with common noise. 

\begin{assumpB}\label{as:C}
Recall the notation $\| \cdot \|_N$ as introduced in Definition~\ref{def:norm_N}. We require the maps $b, \beta, c, \gamma, \Gamma, l, \Lambda$ to satisfy the following for all $x \in \R^N$: 
\begin{itemize}
\item $\lvert \beta(x)\rvert\leq \beta_0$,  $\lvert \Gamma(x)\rvert \leq \Gamma_0$, $\lvert \Lambda(x) \rvert \leq \Lambda_0$ for some $\beta_0, \Gamma_0, \Lambda_0 \in \R$;

\item $\lvert b(x) \rvert \leq b_0 (1 + \|x\|_N)$, $\lvert \gamma(x) \rvert \leq \gamma_0 (1 + \|x\|_N)$ and $\lvert l(x) \rvert \leq l_0 (1 + \|x\|_N)$ for some $b_0, \gamma_0, l_0\in \R$; 

\item $\lvert c(x) \rvert \leq c_0 (1+ \|x\|^2_N)$  for some $c_0\in \R$.
\end{itemize}
\end{assumpB}

Using this set of assumptions, we are ready to state the following lemma, the proof of which can be found in Appendix~\ref{app:cond_poly_squareint}:

\begin{lemma}\label{lem:cond_poly_squareint}
Let us fix an arbitrary $N>0$ and assume there exists a solution $(Z_t)_{t\in[0,\infty)}$ to~\eqref{eq:cond_poly_MV}. Moreover, let the maps $b, \beta, c, \gamma, \Gamma, l,  \Lambda: \R^N \rightarrow \R$ satisfy Assumptions~\ref{as:C}. Then, for all $1\leq k\leq N$ it holds that 
\begin{align*}
d\E[Z_t^k \lvert &\Wcal_t^0]=\Bigg\{ \Big(k\cdot b(\E[\Z_t \lvert \Wcal_t^0]) + \frac{k(k-1)}{2}\big(\gamma(\E[\Z_t \lvert \Wcal_t^0]) \\ & \qquad + 2l(\E[\Z_t \lvert \Wcal_t^0])\Lambda(\E[\Z_t \lvert \Wcal_t^0])  \big) \Big) \E[Z^{k-1}_t \lvert \Wcal_t^0]\\ &\qquad  + \mathbf{1}_{\{k\geq2\}}\frac{k(k-1)}{2} \big(c(\E[\Z_t \lvert \Wcal_t^0]) + l^2(\E[\Z_t \lvert \Wcal_t^0])\big) \E[Z^{k-2}_t \lvert \Wcal_t^0] \\ & \qquad + \big( k\cdot \beta(\E[\Z_t \lvert \Wcal_t^0]) + \frac{k(k-1)}{2}(\Gamma(\E[\Z_t \lvert \Wcal_t^0]) + \Lambda^2(\E[\Z_t \lvert \Wcal_t^0])\big) \E[Z^k_t \lvert \Wcal_t^0] \Bigg\} dt \\ & \qquad + \left(l(\E[\Z_t \lvert \Wcal_t^0])\E[Z^{k-1}_t \lvert \Wcal_t^0] + \Lambda(\E[\Z_t \lvert \Wcal_t^0]) \E[Z^{k}_t  \lvert \Wcal_t^0] \right) dW_t^0.
\end{align*}
\end{lemma}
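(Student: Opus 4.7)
The plan is to apply It\^o's formula to $Z_t^k$, then take conditional expectations with respect to $\Wcal_t^0$ on both sides of the resulting integrated identity, invoking Lemma~\ref{lem:fubini_type} to dispatch the two stochastic integrals.

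First, I apply It\^o's formula to the semimartingale $Z$ driven by~\eqref{eq:cond_poly_MV}. Since $W$ and $W^0$ are independent Brownian motions, the quadratic variation is $d\langle Z\rangle_t = (\Sigma_t^2+L_t^2)\,dt$, where I abbreviate $\Sigma_t^2 = c(\E[\Z_t|\Wcal_t^0]) + \gamma(\E[\Z_t|\Wcal_t^0])Z_t + \Gamma(\E[\Z_t|\Wcal_t^0])Z_t^2$ and $L_t = l(\E[\Z_t|\Wcal_t^0]) + \Lambda(\E[\Z_t|\Wcal_t^0])Z_t$. Using $(z^k)'=kz^{k-1}$ and $(z^k)''=k(k-1)z^{k-2}$, expanding $\Sigma_t^2$ and $L_t^2$ and regrouping by powers of $Z_t$ yields
\[
Z_t^k = Z_0^k + \int_0^t D_s^{(k)}\,ds + \int_0^t k Z_s^{k-1}\Sigma_s\,dW_s + \int_0^t k Z_s^{k-1} L_s\,dW_s^0,
\]
where $D_s^{(k)}$ is an affine combination of $Z_s^{k-2}, Z_s^{k-1}, Z_s^k$ whose $\Wcal_s^0$-measurable coefficients match exactly those in the curly braces of the lemma (the indicator on $k\geq 2$ accounts for the $Z^{k-2}$ term being absent when $k=1$).

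I then take conditional expectations of both sides given $\Wcal_t^0$. All coefficient factors evaluated at $\E[\Z_s|\Wcal_s^0]$ are $\Wcal_s^0$-measurable, hence $\Wcal_t^0$-measurable, for $s\leq t$, and $\E[Z_s^j|\Wcal_t^0]=\E[Z_s^j|\Wcal_s^0]$ for $s\leq t$ thanks to the independence of the $W^0$-increments on $[s,t]$ from $\Fcal_s$; a conditional Fubini applied to the $ds$-integral thus produces precisely the claimed drift. For the stochastic integrals, Lemma~\ref{lem:fubini_type} gives $\E[\int_0^t kZ_s^{k-1}\Sigma_s\,dW_s\,|\,\Wcal_t^0]=0$ and $\E[\int_0^t kZ_s^{k-1}L_s\,dW_s^0\,|\,\Wcal_t^0] = \int_0^t\E[kZ_s^{k-1}L_s\,|\,\Wcal_t^0]\,dW_s^0$, which after substituting $L_s = l+\Lambda Z_s$ and pulling out the $\Wcal_s^0$-measurable coefficients reproduces the $dW_t^0$ term of the lemma.

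The main obstacle is verifying the integrability hypotheses needed for Lemma~\ref{lem:fubini_type} (local square-integrability of the two $\mathbb{F}$-adapted integrands) and for the conditional Fubini step (absolute integrability of $D_s^{(k)}$ over $[0,T]$). Given the polynomial dependence of $\Sigma_s^2, L_s^2, D_s^{(k)}$ on $Z_s$, these reduce to moment estimates of the form $\sup_{s\leq T}\E[|Z_s|^{2N}]<\infty$ for each $T>0$. Such bounds follow under Assumptions~\ref{as:C} by a standard localization: introducing stopping times $\tau_n=\inf\{t:|Z_t|\geq n\}$, applying It\^o to $Z^{2j}_{\cdot\wedge\tau_n}$ for $j=1,\ldots,N$, taking expectations, and closing the coupled system via Gr\"onwall's inequality, exploiting that by Jensen's inequality the $\|\cdot\|_N$-type coefficient bounds are controlled by $1+\sum_{j=1}^N\E[|Z_s|^{2j}]$, then passing $n\to\infty$ via Fatou.
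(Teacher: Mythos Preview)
Your proposal is correct and follows essentially the same route as the paper: It\^o's formula on $Z^k$, conditional expectation with respect to $\Wcal_t^0$, Lemma~\ref{lem:fubini_type} to handle the two stochastic integrals, and the reduction of all integrability requirements to the single estimate $\sup_{s\in[0,T]}\E[|Z_s|^{2N}]<\infty$, obtained by localization and Gr\"onwall. The only noticeable difference is in the mechanics of the moment bound: the paper works directly with the integral representation of $Z_{t\wedge\tau_n}$, applies the BDG inequality to the stochastic integrals, and then closes a single Gr\"onwall inequality for $\E[|Z_{t\wedge\tau_n}|^{2N}]$, whereas you propose applying It\^o to each $Z^{2j}$ and closing a coupled system. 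Both work; the paper's version is slightly cleaner because the coefficients depend on \emph{conditional} moments $\E[\Z_s\mid\Wcal_s^0]$, so terms of the form $\E[f(\E[\Z_s\mid\Wcal_s^0])Z_s^m]$ do not factor, and one has to pass through the conditional Jensen bound $\|\E[\Z_s\mid\Wcal_s^0]\|_N\lesssim\E[|Z_s|^N\mid\Wcal_s^0]^{1/N}$ together with the tower property---a step your sketch alludes to but leaves implicit.
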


We are now finally ready to tackle the question of existence of solutions. To do so, we will establish further conditions on the structure of the drift and volatility conditions. Our first approach is to study, under which assumptions, the process $(Z_t, \E[ \Z_t \lvert \Wcal_t^0])_{t\geq0}$ is a multidimensional polynomial process. Note that we are now in a time-homogeneous setting, see~\cite{Cuchiero_2012} for polynomial processes that are time-homogeneous. We first formulate this for a general state-space $S$ under the assumption of the PMP being satisfied. Later, we will provide examples of state-spaces as well as sufficient conditions for the PMP to hold.

\begin{proposition}\label{prop:joint_poly_1}
Let us assume that the maps $b, \beta, \gamma, \Gamma, l, \Lambda$ are all constant, and denoted by ${b}_0, {\beta}_0,\gamma_0, \Gamma_0, l_0, \Lambda_0$ respectively, as well as $c(x) = {c}_0 + {c}_1 x_1 + {c}_2 x_2$ for some ${c}_0, {c}_1, {c}_2\in \R$ and all $x\in \R^N$.
If the PMP is satisfied on the desired state-space $S\subseteq \R^3$, then for $N=2$, $(Z_t, \E[\Z_t \lvert \Wcal_t^0])_{t\geq0}$ \eqref{eq:cond_poly_MV} is jointly polynomial on $S$ and there exists a path-wise unique solution. More explicitly, the process $(Z_t, \E[\Z_t \lvert \Wcal_t^0])_{t\geq0}$ satisfies the following system of SDEs:
\begin{align*}
&dZ_t =  \left({b}_0 + {\beta}_0 Z_t \right)dt  + \sqrt{{c}_0 + {c}_1 \E[Z_t \lvert \Wcal_t^0]+ {c}_2 \E[Z^2_t\lvert \Wcal_t^0] + {\gamma}_0 Z_t + {\Gamma}_0 Z_t^2} dW_t \\& \qquad + \left({l}_0 + {\Lambda}_0 Z_t \right)dW^0_t\\
&d\E[Z_t \lvert \Wcal_t^0]= \Big\{ {b}_0+ {\beta}_0 \E[Z_t\lvert \Wcal_t^0] \Big\}dt + \Big\{ {l}_0 + {\Lambda}_0 \E[Z_t \lvert \Wcal_t^0]\Big\} dW_t^0
\\
&d\E[Z^2_t \lvert \Wcal_t^0]= \Big\{{c}_0 +{l}^2_0+  
(2{b}_0 + {\gamma}_0 + 2{l}_0{\Lambda}_0+ {c}_1) \E[Z_t \lvert \Wcal_t^0] \\ &\qquad \qquad \qquad+ (2{\beta}_0 + {\Gamma}_0 + {\Lambda}_0^2+ {c}_2) \E[Z^2_t \lvert \Wcal_t^0] \Big\} dt +\Big\{{l}_0 \E[Z_t\lvert \Wcal_t^0]+ {\Lambda}_t^0 \E[Z^2_t\lvert \Wcal_t^0] \Big\} dW_t^0
\end{align*}
\end{proposition}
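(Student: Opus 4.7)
The plan is to proceed by the same logic as the primal approach of Section~\ref{subsec:primal}, but now with the McKean-Vlasov closure realized through the moment \emph{processes} $m_1(t) := \E[Z_t \lvert \Wcal_t^0]$ and $m_2(t) := \E[Z_t^2 \lvert \Wcal_t^0]$, which are random rather than deterministic. First, I would specialize Lemma~\ref{lem:cond_poly_squareint} to $N=2$ with the specific coefficients of the statement (constant $b, \beta, \gamma, \Gamma, l, \Lambda$ and affine $c(x_1,x_2) = c_0 + c_1 x_1 + c_2 x_2$); the terms $c_0$, $c_1 m_1$, $c_2 m_2$ in the drift of $dm_2$ arise precisely from the $c(\E[\Z_t\lvert\Wcal_t^0])\E[Z_t^{k-2}\lvert\Wcal_t^0]$ piece at $k=2$, which yields exactly the displayed system. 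To establish the joint polynomial property it then suffices to observe that the drift vector of $X_t := (Z_t, m_1(t), m_2(t))^{\mathsf T}$ is affine in $X_t$ and that the $3 \times 3$ instantaneous covariance $\Sigma\Sigma^{\mathsf T}(X_t)$, obtained by reading off the $dW_t$ and $dW_t^0$ coefficients, has entries of degree at most two in the components of $X_t$; the multidimensional, time-homogeneous analogue of Definition~\ref{def:poly_diffusion} together with Proposition~\ref{prop:poly_diffusion} then identifies $X_t$ as a polynomial diffusion on $S$.

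For existence and path-wise uniqueness I would exploit the decoupled structure of the system and solve it recursively. The SDE for $m_1$ is linear in $m_1$ with constant coefficients and is driven solely by $W^0$, hence admits a unique strong solution with every moment finite. Given $m_1$, the SDE for $m_2$ is affine in $m_2$ with $\Wcal^0$-adapted, square-integrable coefficients, so it again admits a unique strong solution by standard theory for linear SDEs. With $(m_1,m_2)$ now fixed as $\Wcal^0$-measurable inputs, the equation for $Z_t$ becomes a one-dimensional SDE with time-dependent (random) coefficients that are locally Lipschitz in $Z$ and, thanks to moment bounds on $(m_1, m_2)$ together with boundedness of $\beta, \Gamma, \Lambda$, satisfy the growth hypotheses \eqref{eq:sigma_growth}, \eqref{eq:xb_growth} of Theorem~\ref{thm:sol_MP}; invoking that theorem under the PMP assumption on $S$ yields a path-wise unique strong solution $Z_t$.

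I expect the main obstacle to be the self-consistency verification: the constructed $Z_t$ must actually satisfy $\E[Z_t^k \lvert \Wcal_t^0] = m_k(t)$ for $k=1,2$, and not merely solve an SDE with $m_1, m_2$ plugged in as external inputs. The plan here is to set $\tilde m_k(t) := \E[Z_t^k \lvert \Wcal_t^0]$ and apply Lemma~\ref{lem:cond_poly_squareint} to the constructed $Z_t$; because $b,\beta,\gamma,\Gamma,l,\Lambda$ are constants and $c$ is affine, the resulting coefficients in the SDEs for $(\tilde m_1, \tilde m_2)$ depend only on $(\tilde m_1, \tilde m_2)$ through the same affine combination as in the $(m_1,m_2)$-system, and the two systems coincide. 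Uniqueness of the linear moment SDEs then forces $\tilde m_k \equiv m_k$, closing the McKean-Vlasov loop. A minor care point, needed to legitimately invoke Lemma~\ref{lem:cond_poly_squareint} on the constructed $Z_t$, is to verify that the required conditional moments are finite; this follows from an It\^o and Gronwall estimate based on the affine form of the coefficients and is essentially automatic in the present constant/affine specialization, where Assumptions~\ref{as:C} are trivially satisfied.
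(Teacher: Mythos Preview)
Your approach is essentially the paper's: specialize Lemma~\ref{lem:cond_poly_squareint} to the given constants, read off the polynomial structure from the differential characteristics, solve the autonomous moment subsystem first, and then plug it into the one-dimensional equation for $Z_t$. Two minor points worth noting: the paper handles $(m_1,m_2)$ jointly via \cite[Theorem~3.7]{EK} and $Z_t$ via the one-dimensional Yamada--Watanabe type result \cite[Theorem~3.8]{EK} rather than via Theorem~\ref{thm:sol_MP}, which is the cleaner citation since Theorem~\ref{thm:sol_MP} is formulated for deterministic time-dependent coefficients whereas after plugging in $(m_1,m_2)$ the coefficients of the $Z_t$-equation are $\Wcal^0$-adapted random processes; and the self-consistency closure you single out as the main obstacle is exactly right and is precisely what the paper leaves implicit.
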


\begin{proof}
Under these assumptions, it is straightforward to check that the system of SDEs outlined in Lemma~\ref{lem:cond_poly_squareint} reduces to the system of SDEs stated in the claim. Moreover, 
the process $(Z_t, \E[Z_t\lvert \Wcal_t^0] , \E[Z^2_t \lvert \Wcal_t^0])_{t\geq 0}$ is clearly polynomial, which can for example be seen from the differential characteristics. Regarding the path-wise uniqueness, we first consider $(\E[Z_t\lvert \Wcal_t^0], \E[Z^2_t\lvert \Wcal_t^0])_{t\geq 0}$ alone. Indeed, the corresponding drift and diffusion term are locally Lipschitz continuous such that~\cite[Theorem 3.7]{EK} is applicable. Hence, a path-wise unique solution exists for $(\E[Z_t\lvert \Wcal_t^0], \E[Z^2_t\lvert \Wcal_t^0])_{t\geq 0}$. We can now plug this solution in and  solve for $(Z_t)_{t\geq 0}$ which is now a one-dimensional SDE and hence we can apply~\cite[Theorem 3.8]{EK} to establish path-wise uniqueness. 
\end{proof}

Alternatively, by changing the form of the map $c(\cdot)$ to not include a term $\E[Z^2_t\lvert \Wcal_t^0] $ but instead a term $(\E[Z_t\lvert \Wcal_t^0])^2$ we gain more flexibility for the form of the rest of the coefficients, since we now only need  $(Z_t, \E[Z_t\lvert \Wcal_t^0])_{t\geq 0}$ to be polynomial instead of $(Z_t, \E[Z_t\lvert \Wcal_t^0] , \E[Z^2_t \lvert \Wcal_t^0])_{t\geq 0}$. Let us make this more precise in the next proposition.

\begin{proposition}\label{prop:joint_poly_2}
Assume that the following hold for all $x\in \R^N$
\begin{itemize}
\item $\beta(x)\equiv \beta_0$, $\Gamma(x)\equiv \Gamma_0$, $l(x) \equiv {l}_0$, $\Lambda(x) \equiv {\Lambda}_0$ for some $\beta_0,\Gamma_0, {l}_0, \Lambda_0 \in \R$
\item $b(x) = {b}_0 + \tilde{b}_1 x_1$ and $\gamma(x) = {\gamma}_0+ {\gamma}_1 x_1$ for some ${b}_0, {b}_1, \gamma_0, \gamma_1 \in \R$;
\item $c(x)= {c}_0 + {c}_1x_1 + \tilde{c}_2\left( x_1\right)^2$ for some ${c}_0,{c}_1,{c}_2\in \R$.

\end{itemize}
If the PMP is satisfied on $S\subseteq \R^2$, then for $(Z_t)_{t\geq 0} $ given by \eqref{eq:cond_poly_MV}, $(Z_t, \E[Z_t \lvert \Wcal_t^0])_{t\geq0}$ is a polynomial process on $S$ given by the solution to the SDEs:
\begin{align*}
dZ_t &= \left({b}_0+ {b}_1\E[Z_t\lvert \Wcal_t^0]+ {\beta}_0Z_t \right) dt+ \left({l}_0 + {\Lambda}_0 Z_t\right)dW_t^0\\
&\quad +  \sqrt{{c}_0 + {c}_1 \E[Z_t \lvert \Wcal_t^0]+ {c}_2 \left(\E[Z_t\lvert \Wcal_t^0]\right)^2 + \left({\gamma}_0+ {\gamma}_1 \E[Z_t \lvert \Wcal_t^0]\right) Z_t + {\Gamma}_0 Z_t^2} dW_t 
\\
d\E[Z_t \lvert \Wcal_t^0]&= \left({b}_0+ ({b}_1+ {\beta}_0) \E[Z_t\lvert \Wcal_t^0]\right) dt + \left({l}_0 + {\Lambda}_0 \E[Z_t \lvert \Wcal_t^0] \right)dW_t^0.
\end{align*}
In particular, this solution is path-wise unique. 
\end{proposition}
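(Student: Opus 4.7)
The plan is to mirror the strategy of Proposition~\ref{prop:joint_poly_1}, but with a two-dimensional joint state vector rather than three-dimensional: the specific form of $c$ allows the conditional second moment to be replaced by the square of the conditional first moment. Concretely, I will (i) apply Lemma~\ref{lem:cond_poly_squareint} with $k=1$ to derive the dynamics of $M_t:=\E[Z_t\lvert\Wcal_t^0]$, (ii) verify that these dynamics close as a linear SDE in $M_t$ alone, (iii) read off the differential characteristics of the joint process $(Z_t,M_t)_{t\geq 0}$ and confirm they are polynomial in $(z,m)$, and (iv) establish pathwise uniqueness via a decoupling argument.

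For (i)--(ii), substituting $\beta\equiv\beta_0$, $\Lambda\equiv\Lambda_0$, $l\equiv l_0$, and $b(x)=b_0+b_1x_1$ into the $k=1$ case of Lemma~\ref{lem:cond_poly_squareint} yields
\begin{align*}
dM_t=\bigl(b_0+(b_1+\beta_0)M_t\bigr)\,dt+(l_0+\Lambda_0 M_t)\,dW_t^0,
\end{align*}
which has Lipschitz coefficients and hence admits a pathwise unique strong solution by standard SDE theory. The crucial point here is that the coefficients of the $M_t$-equation do not involve higher conditional moments; this closure would fail without the specific form of $b,\beta,l,\Lambda$. Likewise, the choice $c(x)=c_0+c_1x_1+c_2x_1^2$ ensures that $c(\E[\Z_t\lvert\Wcal_t^0])=c_0+c_1M_t+c_2M_t^2$ is polynomial in $M_t$ and does not involve $\E[Z_t^2\lvert\Wcal_t^0]$, which is what makes step (iii) go through.

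For (iii), the drift vector of $(Z_t,M_t)$ equals $(b_0+b_1m+\beta_0 z,\ b_0+(b_1+\beta_0)m)^{\mathsf T}$, affine in $(z,m)$, and each entry of the quadratic covariation matrix is a polynomial of degree at most two in $(z,m)$, so $(Z_t,M_t)$ is a time-homogeneous polynomial process in the sense of~\cite{Cuchiero_2012}. For (iv), with $(M_t)_{t\geq 0}$ already pathwise unique, I substitute it into the $Z_t$-equation to obtain a scalar SDE with $\Wcal^0$-measurable, locally Lipschitz (in $z$) coefficients, and pathwise uniqueness then follows from~\cite[Theorem 3.8]{EK}, exactly as in the proof of Proposition~\ref{prop:joint_poly_1}. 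The main (and essentially only non-routine) obstacle is step (ii): verifying that the structural assumptions on the coefficients suffice to close the $M_t$-equation without bringing in the conditional second moment; once this closure is observed, the remaining steps are direct applications of results already established.
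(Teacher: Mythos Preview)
Your proposal is correct and follows essentially the same approach as the paper's own proof: derive the $M_t$-dynamics from Lemma~\ref{lem:cond_poly_squareint} with $k=1$, observe that the system closes and is polynomial via its differential characteristics, and establish pathwise uniqueness by first solving the autonomous $M_t$-equation and then the one-dimensional $Z_t$-equation via~\cite[Theorem 3.8]{EK}. Your write-up is simply more explicit about the closure step than the paper's terse ``straightforward to derive'' remark.
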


\begin{proof}
It is straightforward to derive this system of SDEs from Lemma~\ref{lem:cond_poly_squareint} and to see that the solution is a time-homogenous polynomial process. Regarding path-wise uniqueness, we can proceed exactly as in Proposition~\ref{prop:joint_poly_1}. Solving first for $(\E[Z_t\lvert \Wcal_t^0])_{t\geq 0}$ and then for $(Z_t)_{t\geq0}$.
\end{proof}

\begin{remark}
The above results hold under the assumption that the PMP is satisfied, and we will provide explicit conditions to ensure that in Proposition~\ref{prop:S_joint_poly}.
\end{remark}

The fact that the above processes are jointly polynomial gives us access to the full polynomial theory, in particular the moment formula, which may be  desirable in applications.

Instead of requiring the processes to be jointly polynomial, 
we can of course also treat the general case of equation  
\eqref{eq:cond_poly_MV}, by showing existence of solutions via the associated martingale problem.  This is the purpose of the next theorem. Again, we first formulate existence on some state space $S$ under the assumption that the PMP holds and provide concrete state spaces as well as sufficient conditions for the PMP later. 

\begin{theorem}\label{thm:cond_existence_MP}
Let $N\geq 1$ and assume that Assumptions~\ref{as:C} are satisfied. Moreover, let $S\subseteq \R^{N+1}$ be a state space for which the PMP is fulfilled.  Then \eqref{eq:cond_poly_MV} admits a solution with values in $S$ in the sense of a solution to the martingale problem. 
\end{theorem}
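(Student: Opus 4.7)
The plan is to reduce equation~\eqref{eq:cond_poly_MV} to a standard (non McKean--Vlasov) SDE on $\R\times\R^N$ by introducing auxiliary processes $\mathbb{M}_t=(M^1_t,\ldots,M^N_t)$ that will play the role of the conditional moments $\E[Z^k_t\mid\Wcal^0_t]$. Guided by Lemma~\ref{lem:cond_poly_squareint}, I would write the coupled system in which each $M^k$ satisfies the SDE of that lemma with every occurrence of $\E[\Z_t\mid\Wcal^0_t]$ replaced by $\mathbb{M}_t$ (so that the $M$-subsystem is closed and driven only by $W^0$), while $Z$ satisfies~\eqref{eq:cond_poly_MV} with the same replacement. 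The coefficients of this joint $(N{+}1)$-dimensional SDE are then continuous functions of $(z,m)\in S$.

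First, I would define the infinitesimal generator of this joint diffusion and invoke a multidimensional analogue of Theorem~\ref{thm:sol_MP} (e.g.~\cite[Theorem~5.3.10]{EK}) to obtain a solution to the associated martingale problem on $S$. The PMP on $S$ is a hypothesis of the theorem; Assumptions~\ref{as:C} --- linear growth of $b,\gamma,l$, quadratic growth of $c$, and boundedness of $\beta,\Gamma,\Lambda$ --- are exactly what is needed to verify a joint growth estimate of the form $x B(x)+\tr(\Sigma\Sigma^\top)(x)\leq K(1+\|x\|^2)$ on $\R^{N+1}$. This estimate prevents explosion of the candidate solution and, via the ensuing moment bounds, ensures that every local stochastic integral appearing below is a true martingale.

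It then remains to identify $M^k_t$ with $\E[Z^k_t\mid\Wcal^0_t]$ in the constructed solution. I would proceed by induction on $k$. Since the $M$-subsystem is driven solely by $W^0$, $\mathbb{M}_t$ is $\Wcal^0_t$-adapted. Applying It\^o's formula to $Z^k_t$, taking $\E[\,\cdot\mid\Wcal^0_t]$, and using Lemma~\ref{lem:fubini_type} to eliminate the $dW$-integral and to commute the conditional expectation with the $dW^0$-integral yields an SDE for $N^k_t:=\E[Z^k_t\mid\Wcal^0_t]$. Under the inductive hypothesis $N^j_t=M^j_t$ for $j<k$, this reduces to a linear inhomogeneous SDE in $N^k_t$ whose coefficients in the unknown are $\Wcal^0_t$-adapted and bounded by $\beta_0,\Gamma_0,\Lambda_0$; since $M^k_t$ satisfies the same linear SDE with the same initial condition, pathwise uniqueness of linear SDEs with bounded coefficients yields $N^k_t=M^k_t$ and closes the induction.

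The main obstacle is precisely this identification step, since the SDE for $\mathbb{M}_t$ and the genuine conditional-moment system have, a priori, different coefficients and only coincide once the two processes are known to agree. The lower-triangular structure of the moment equations --- the dynamics of $M^k$ involve only $M^1,\ldots,M^k$ --- is what breaks this apparent circularity via induction. A secondary technical point is the choice of state space $S\subseteq\R^{N+1}$: it must be compatible both with the prescribed range for $Z_t$ and with the ensuing constraints on the moments (e.g.\ $M^k_t\geq 0$ when $Z_t\geq 0$ and $k$ is even), and the PMP on $S$ will in concrete cases be established by a direct calculation on the boundary faces in the spirit of Proposition~\ref{prop:pmp}.
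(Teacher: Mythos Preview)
Your approach is essentially the paper's: set up the joint $(N{+}1)$-dimensional system $(Z_t,\mathbb{M}_t)$ from Lemma~\ref{lem:cond_poly_squareint} and invoke \cite[Theorem~5.3.10]{EK} via the PMP hypothesis together with the growth bounds of Assumptions~\ref{as:C}, which the paper verifies componentwise on $x_j B(x)_j$ and on the entries of $\Sigma\Sigma^{\mathsf T}$. The paper does not spell out the identification $M^k_t=\E[Z^k_t\mid\Wcal^0_t]$; your inductive argument for this step---using that $\mathbb{M}$ is $\Wcal^0$-adapted and that the resulting equation for $\E[Z^k_t\mid\Wcal^0_t]$ is linear in the unknown with bounded coefficients $\beta_0,\Gamma_0,\Lambda_0$---is correct and a welcome addition of rigor.
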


We show the proof of Theorem~\ref{thm:cond_existence_MP} in Appendix~\ref{app:cond_existence_MP}. We are left to formulate sufficient conditions for the PMP to be satisfied on the desired state space. However, in contrast to Proposition~\ref{prop:pmp}, we are now in a multidimensional but time-homogeneous setting. The reasoning is the same as the one applied in Proposition~\ref{prop:pmp}. We nevertheless briefly summarize sufficient conditions for the multidimensional setting in the next lemma for completeness. 

\begin{lemma}\label{lem:pmp_multidim}
Let us consider a state space $S\subseteq \R^d$ such that $S= S^{(1)}\otimes S^{(2)} \otimes \dots \otimes S^{(d)}$ with $S^{(1)}, \dots, S^{(d)} \in \{\R, \R_{\geq 0}\}$ for a multidimensional SDE driven by $m\geq 1$ independent Brownian motions $(W_t)_{t\geq 0} = (W_t^1, \dots , W_t^m)_{t\geq 0}$ 
\begin{equation}
    dX_t = B(X_t) dt + \Sigma(X_t)dW_t
\end{equation}
where $B: \R^d \rightarrow  \R^d$ and $\Sigma: \R^d \rightarrow \R^{d\times m}$. This SDE has a generator given by
\begin{align*}
( \mathcal{H}f)(x) = \sum_{i=1}^d B(x)_i \partial_i f(x) + \frac{1}{2}\sum_{i,j=1}^d (\Sigma(x)\Sigma(x)^{\mathsf{T}})_{ij} \partial_{ij} f(x)
\end{align*}
for $f\in C^{\infty}_c(\R^d)$. Let $x_0\in S$ be such that $f(x_0) = \sup_{x\in S} f(x)$. Then $ \mathcal{H}$ satisfies the positive maximum principle on $S=\R^d$ directly. For all other cases, we need some additional conditions on the boundary. Let us denote the set $I(x_0):= \big\{i\in \{1, \dots, d\} \lvert (x_0)_i= 0 \textrm{ and } S^{(i)}= \R_{\geq 0}\big\}$. If $I(x_0)\neq \emptyset$, i.e. at least for one coordinate the maximum lies on the boundary, we require for all $i \in I(x_0)$ and $j\in \{1, \dots, d\}$
$$(\Sigma(x_0)\Sigma(x_0)^{\mathsf{T}})_{ij} = 0 \textrm{
and } B(x_0)_i \geq 0 .$$ 
\end{lemma}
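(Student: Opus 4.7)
The strategy is to mirror the argument of Proposition~\ref{prop:pmp}, but with the complication that $x_0$ may lie on the boundary of $S$ in some coordinate directions while being in the interior of $S$ in others. The plan is to fix $x_0\in S$ with $f(x_0)=\sup_{x\in S}f(x)$ for $f\in C_c^\infty(\R^d)$ and show that $(\mathcal{H}f)(x_0)\leq 0$ under the stated conditions by decomposing both the drift and the Hessian term according to whether each index belongs to $I(x_0)$ or not.

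First I would collect the elementary first-order consequences of $x_0$ being a maximum. For indices $i\notin I(x_0)$ the point $x_0$ is a maximum of $f$ in the $i$-th direction locally in a two-sided neighbourhood, so $\partial_i f(x_0)=0$. For $i\in I(x_0)$ one has $(x_0)_i=0$ and only one-sided perturbations (in the $+e_i$ direction) stay inside $S$, which forces $\partial_i f(x_0)\leq 0$. Combining with $B(x_0)_i\geq 0$ for $i\in I(x_0)$ this gives
\begin{equation*}
\sum_{i=1}^d B(x_0)_i\,\partial_i f(x_0)=\sum_{i\in I(x_0)} B(x_0)_i\,\partial_i f(x_0)\leq 0.
\end{equation*}

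Next I would handle the second-order term. Using the hypothesis $(\Sigma(x_0)\Sigma(x_0)^{\mathsf T})_{ij}=0$ whenever $i\in I(x_0)$ (and by symmetry whenever $j\in I(x_0)$), only pairs $(i,j)$ with $i,j\notin I(x_0)$ contribute:
\begin{equation*}
\sum_{i,j=1}^d (\Sigma\Sigma^{\mathsf T})_{ij}(x_0)\,\partial_{ij}f(x_0)=\sum_{i,j\notin I(x_0)}(\Sigma\Sigma^{\mathsf T})_{ij}(x_0)\,\partial_{ij}f(x_0).
\end{equation*}
The key observation is then that, restricted to the affine subspace obtained by freezing the coordinates in $I(x_0)$ at $0$, the point $x_0$ is a local maximum of $f$ in a full (two-sided) neighbourhood. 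Consequently, the Hessian block $\bigl(\partial_{ij}f(x_0)\bigr)_{i,j\notin I(x_0)}$ is negative semidefinite, while the corresponding block of $\Sigma\Sigma^{\mathsf T}(x_0)$ is positive semidefinite as a principal submatrix of a PSD matrix. The standard fact that $\operatorname{tr}(AB)\leq 0$ when $A\succeq 0$ and $B\preceq 0$ then yields that the double sum is $\leq 0$.

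Putting the two pieces together gives $(\mathcal{H}f)(x_0)\leq 0$, which is the PMP. The case $S=\R^d$ is the special case $I(x_0)=\emptyset$, where the boundary conditions are vacuous and the argument reduces to the standard one. I expect the main subtlety to be the careful bookkeeping of the Hessian block: one must argue that the PSD/NSD pairing is legitimate after the off-diagonal terms involving $I(x_0)$ have been killed by the condition on $\Sigma\Sigma^{\mathsf T}$, rather than only on the diagonal. The rest is essentially the multidimensional analogue of the one-dimensional calculation in Proposition~\ref{prop:pmp}.
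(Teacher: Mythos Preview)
Your proposal is correct and follows essentially the same route as the paper: both split the indices into $I(x_0)$ and its complement, use $\partial_i f(x_0)\leq 0$ together with $B(x_0)_i\geq 0$ on $I(x_0)$ for the drift, kill the cross second-order terms via the hypothesis on $\Sigma\Sigma^{\mathsf T}$, and then invoke negative semidefiniteness of the Hessian block on the complement against the positive semidefinite diffusion block. The only cosmetic difference is that the paper writes the remaining second-order sum as $\sum_k \Sigma_{\cdot k}^{\mathsf T}(\partial_{ij}f)\Sigma_{\cdot k}$ and reads off the sign from the quadratic form, whereas you phrase it as $\operatorname{tr}(AB)\leq 0$ for $A\succeq 0$, $B\preceq 0$; these are the same argument.
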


\begin{proof}
The reasoning is completely analogous to the proof of Proposition~\ref{prop:pmp}.
Let us first treat the case that $x_0$ is in the interior of $S$. In this case we know by the definition of $f(x_0)$ being the maximum that 
$$\partial_i f(x_0) = 0 \textrm{ for all } i \in \{1, \dots, d\}$$
$$ \big(\partial_{ij} f(x_0)\big)_{1\leq i, j \leq d} \textrm{ is negative semidefinite.}$$
Hence, we can easily verify that $( \mathcal{H}f)(x_0) \leq 0$ in this case, noting that 
\begin{align*}
( \mathcal{H}f )(x_0) &= \frac{1}{2}\sum_{i,j=1}^d (\Sigma(x_0)\Sigma(x_0)^{\mathsf{T}})_{ij} \partial_{ij} f(x_0) = \frac{1}{2}\sum_{i,j, k=1}^d \Sigma(x_0)_{ik}\Sigma(x_0)^{\mathsf{T}}_{kj} \partial_{ij} f(x_0) \\ &= \frac{1}{2}\sum_{i, j,k=1}^d \Sigma(x_0)^{\mathsf{T}}_{kj} \big( \partial_{ji} f(x_0) \big) \Sigma(x_0)_{ik} \leq 0
\end{align*}
by the negative semidefinitness of the Hessian. Now for the cases that in some coordinates (potentially all) the maximum lies on the boundary, let us treat separately the sets of indices $I(x_0)$ and $\{1, \dots, d\} \setminus I(x_0)$. For all $i \in I(x_0)$ it holds that $\partial_i f(x_0) \leq 0$ and by the requirements of the theorem we find 
\begin{align*}
&\sum_{i\in I(x_0)} B(x_0)_i \partial_if(x_0) \leq 0 \\
&\sum_{\substack{i\in I(x_0)\\ j\in \{1, \dots, d\}}} (\Sigma(x_0)\Sigma(x_0)^{\mathsf{T}})_{ij} \partial_{ij}f(x_0)+ \sum_{\substack{j\in I(x_0)\\ i\in \{1, \dots, d\}}} (\Sigma(x_0)\Sigma(x_0)^{\mathsf{T}})_{ij} \partial_{ij}f(x_0) =0,
\end{align*}
where the second sum follows by symmetry. 

On the other hand, $\partial_i f(x_0)=0$ for all $i\notin I(x_0)$ and $(\partial_{ij} f(x_0))_{i,j \notin I(x_0)}$ needs to be negative semidefinite, as $f(x_0)$ is concave in these coordinates. Hence, it follows as above that 
\begin{align*}
\sum_{i\notin I(x_0)} B(x_0)_i \partial_i f(x_0) + \frac{1}{2}\sum_{{i, j\notin I(x_0)}} (\Sigma(x_0)\Sigma(x_0)^{\mathsf{T}})_{ij} \partial_{ij}f(x_0) \leq 0
\end{align*}
and we have proven the claim. 
\end{proof}

We now outline possible state-spaces and sufficient conditions for the PMP to be satisfied in the case of Proposition~\ref{prop:joint_poly_1},~\ref{prop:joint_poly_2} and Theorem~\ref{thm:cond_existence_MP}, respectively. Note that what we mean by "state-spaces" here is slightly different than what is sometimes referred to as the state-space of a Markov process in the literature. We mean that the process takes values in the respective space and will not leave the state-space in finite time. However, the process will not explore the entire space, and it is also not possible for the process to start at every point in the space. Namely, if $Z_0=z \in \R$ the process $(Z_t, \E[Z_t\lvert \Wcal_t^0], \E[Z_t^2 \lvert \Wcal_t^0])_{t\geq 0}$ can only start in points like $(z, z, z^2)$.  Keeping this in mind, we are now ready to state the following theorems:

\begin{proposition}[State-spaces and PMP for Proposition~\ref{prop:joint_poly_1}~\&~\ref{prop:joint_poly_2}]\label{prop:S_joint_poly}
We consider the state-space $S= \R_{\geq 0} \times \R_{\geq 0} \times \R_{\geq 0}$ and $S= \R_{\geq 0} \times \R_{\geq 0}$, respectively. In both cases, the PMP holds if $b, \gamma, {\Gamma}_0 \geq 0$
and ${l}_0= c= 0$. 
For the case $S= \R \times \R$ (Proposition~\ref{prop:joint_poly_2}) we require ${c}_0, {c}_2, {\Gamma}_0 \geq 0$ and $\gamma, {c}_1=0$. And in addition to that, it must hold ${l}_0={b}_0=0$ for the PMP to be satisfied on $\R\times \R \times \R_{\geq 0}$ in the context of Proposition~\ref{prop:joint_poly_1}.
\end{proposition}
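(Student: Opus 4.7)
The plan is to apply Lemma~\ref{lem:pmp_multidim} directly to each of the four state spaces in question. For each case I would write out the drift vector $B$ and the diffusion matrix $\Sigma\Sigma^{\mathsf{T}}$ obtained from the explicit joint SDEs of Propositions~\ref{prop:joint_poly_1} (state $(y_1,y_2,y_3)=(Z,\E[Z\lvert\Wcal^0],\E[Z^2\lvert\Wcal^0])$) and~\ref{prop:joint_poly_2} (state $(y_1,y_2)=(Z,\E[Z\lvert\Wcal^0])$), noting that $\Sigma$ has two columns, one for the idiosyncratic $dW_t$ and one for the common $dW_t^0$. For each coordinate face $\{y_i=0\}$ lying in the boundary of $S$, Lemma~\ref{lem:pmp_multidim} requires (i) the whole $i$-th row of $\Sigma\Sigma^{\mathsf{T}}$ to vanish identically in the remaining free coordinates, and (ii) $B_i$ to be non-negative on that face. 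Since the free coordinates range over an unbounded half-line or line, (i) is not merely a pointwise identity but forces entire coefficient families to vanish. In addition, I need the square-root argument of the diffusion driven by $dW_t$ to be non-negative on all of $S$, so that the SDE itself is well-defined.

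For $S=\R_{\geq 0}^3$ in the setting of Proposition~\ref{prop:joint_poly_1}, the entry $(\Sigma\Sigma^{\mathsf{T}})_{33}=(l_0y_2+\Lambda_0y_3)^2$ restricted to $y_3=0$ equals $l_0^2y_2^2$, and vanishing for all $y_2\geq 0$ forces $l_0=0$; likewise, $(\Sigma\Sigma^{\mathsf{T}})_{11}$ restricted to $y_1=0$ equals $c_0+c_1y_2+c_2y_3+l_0^2$, and vanishing for all $y_2,y_3\geq 0$ forces $c\equiv 0$. Once $l_0=c=0$ is imposed, the remaining off-diagonal vanishing conditions on the three boundary faces are automatic. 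The drift conditions $B_1|_{y_1=0}=B_2|_{y_2=0}=b_0\geq 0$ give $b\geq 0$, while $B_3|_{y_3=0}=(2b_0+\gamma_0)y_2\geq 0$ for all $y_2\geq 0$ then yields $\gamma_0\geq 0$; finally $\Gamma_0\geq 0$ ensures the square root is well-defined on the bulk of $\R_{\geq 0}^3$. The same computation restricted to two coordinates handles $S=\R_{\geq 0}^2$ in Proposition~\ref{prop:joint_poly_2}.

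For $S=\R\times\R$ in Proposition~\ref{prop:joint_poly_2} the boundary is empty, so Lemma~\ref{lem:pmp_multidim} grants the PMP at no cost; the only remaining constraint is that the square-root argument $c_0+c_1y_2+c_2y_2^2+(\gamma_0+\gamma_1y_2)y_1+\Gamma_0y_1^2$ be non-negative on all of $\R^2$, for which the stated $\gamma_0=\gamma_1=c_1=0$ together with $c_0,c_2,\Gamma_0\geq 0$ are a clean sufficient set, reducing the argument to $c_0+c_2y_2^2+\Gamma_0y_1^2$. Finally, for $S=\R\times\R\times\R_{\geq 0}$ in Proposition~\ref{prop:joint_poly_1}, only the face $y_3=0$ is on the boundary; as before, vanishing of $(\Sigma\Sigma^{\mathsf{T}})_{33}|_{y_3=0}=l_0^2y_2^2$ for all $y_2\in\R$ forces $l_0=0$, and then $B_3|_{y_3=0}=c_0+(2b_0+\gamma_0+c_1)y_2\geq 0$ for all $y_2\in\R$ forces $c_0\geq 0$ and $2b_0+\gamma_0+c_1=0$; combined with the bulk conditions $\gamma_0=c_1=0$ already needed to keep the square root real on $\R\times\R\times\R_{\geq 0}$, this yields the additional requirement $b_0=0$.

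The only genuine obstacle throughout is bookkeeping: one has to correctly decompose $\Sigma\Sigma^{\mathsf{T}}$ into the rank-one piece coming from the common noise $dW_t^0$ and the single-entry piece from the idiosyncratic noise $dW_t$, and then exploit that vanishing of a row of $\Sigma\Sigma^{\mathsf{T}}$ on an unbounded face collapses entire polynomial families of coefficients to zero rather than producing a pointwise identity. No tool beyond Lemma~\ref{lem:pmp_multidim} and the explicit forms of $B$ and $\Sigma\Sigma^{\mathsf{T}}$ derived from Lemma~\ref{lem:cond_poly_squareint} is required.
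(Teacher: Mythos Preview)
Your proposal is correct and follows essentially the same approach as the paper: both apply Lemma~\ref{lem:pmp_multidim} case by case, checking first that the square-root argument is non-negative on the interior of $S$ and then verifying the vanishing-of-diffusion-row and inward-pointing-drift conditions on each boundary face. Your write-up is slightly more explicit than the paper's in \emph{deriving} the constraints (e.g., showing that vanishing of $(\Sigma\Sigma^{\mathsf T})_{33}$ on the unbounded face $\{y_3=0\}$ forces $l_0=0$, and that $B_3\lvert_{y_3=0}\geq 0$ for all $y_2\in\R$ forces $2b_0+\gamma_0+c_1=0$), whereas the paper merely verifies that the stated conditions suffice; but the underlying computation and the tool used are identical.
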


\begin{proof}
 Let us recall the statements of Lemma~\ref{lem:pmp_multidim}. We can then treat the individual cases separately:

\begin{itemize}
    \item We start with Proposition~\ref{prop:joint_poly_1} and $S= \R_{\geq 0}\times \R_{\geq 0} \times \R_{\geq 0}$. In this case we require for $x_0 \in \R_{>0}\times \R_{>0} \times \R_{>0}$ that $\Sigma(x_0)$ is real-valued, i.e. the term under the square-root to be non-negative, which is indeed the case for $\gamma_0, \Gamma_0\geq0$ and $l_0=c=0$. Next we check the case that the boundary is attained in at least one coordinate, i.e. $I(x_0)$. It is easy to see that the relevant drift terms indeed point inwards and that the diffusion coefficients vanish at these points, if $b, \gamma, {\Gamma}_0 \geq 0$ and ${l}_0= c= 0$.

    \item In the case of Proposition~\ref{prop:joint_poly_1} and $S= \R\times \R \times \R_{\geq 0}$. We first check the interior, i.e. $x_0 \in \R\times \R \times \R_{\geq 0}$ where again the only requirement is for the term under the square-root to be non-negative, which indeed holds if $c_1=\gamma_0 =0$ and $\Gamma_0, c_0, c_2\geq 0$. In this case, the boundary can only be attained in the last coordinate, i.e. $I(x_0)=\{2\}$. Indeed, $B(x_0)_2\geq 0$ if $b_0=\gamma_0=l_0=c_1=0$ and $c_0\geq 0$ and $\Sigma(x_0)_{22} = 0$ if $l_0=0$. 

    \item Turning to Proposition~\ref{prop:joint_poly_2} and the case $S= \R \times \R$ the term under the square-root is indeed positive if $c_1=\gamma=0$ and $c_0, c_2, \Gamma_0\geq 0$. In this case there is no boundary to be checked. 

    \item Lastly, in the case of Proposition~\ref{prop:joint_poly_2} and $S= \R_{\geq 0} \times \R_{\geq 0}$ the term under the square-root is non-negative on all of $\R_{>0} \times \R_{>0}$ if $c, \gamma, \Gamma_0\geq 0$. If $0\in I(x_0)$ it holds that $B(x_0)_0\geq 0$ if $b\geq 0$ and the corresponding diffusion terms vanish if $l_0=c=0$. Alternatively, if $1 \in I(x_0)$, the drift $B(x_0)_1\geq 0$ if $b_0\geq 0$ and the diffusion vanishes if $l_0=0$.

\end{itemize}
\end{proof}

\begin{remark}
Let us point out the connection between the above statements and~\cite[Proposition 6.4]{Filipovic_Larsson}. In contrast to our considerations,~\cite[Proposition 6.4]{Filipovic_Larsson} prohibits absorption of the process by the boundary, while we allow the process to remain at the boundary. This difference leads to the requirement in~\cite[Proposition 6.4]{Filipovic_Larsson} that the drift remains strictly positive on the boundary. We are able to accommodate this for the case $S=\R_{\geq0} \times \R_{\geq 0}$ by enforcing $b, \beta >0$. If we allow for absorption at the boundary and therefore slightly weaken the conditions of~\cite[Proposition 6.4]{Filipovic_Larsson}, the conditions we found in Proposition~\ref{prop:S_joint_poly} are indeed also sufficient conditions for~\cite[Proposition 6.4]{Filipovic_Larsson}.
\end{remark}

\begin{proposition}[State-spaces and PMP for Theorem~\ref{thm:cond_existence_MP}]\label{prop:cond_existence_PMP}
First, we consider the state-space 
$S= \R_{\geq 0} \times \dots \times \R_{\geq 0}$ on which the PMP holds if $ l \equiv c \equiv 0$, $b, \gamma, \Gamma, \Lambda \geq 0$.
On the other hand, for the state-space $S= \R\times \R \times \R_{\geq0} \times \dots \times \R \times \R_{\geq 0}$ alternating or (or $S= \R\times \R \times \R_{\geq0} \times \dots \times \R \times \R_{\geq 0}\times \R$ alternating)  sufficient conditions for the PMP to hold are $\gamma \equiv 0$, $\Gamma, c\geq 0$ and for all $x_0 \in S $ it must hold that $c(x_0)=l(x_0)=0$, $b(x_0)\geq 0$.
Moreover, if $b, \beta, c, \gamma, \Gamma, l, \Lambda$ are locally Lipschitz-continuous, then the solutions are path-wise unique. 
\end{proposition}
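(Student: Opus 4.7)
The plan is to apply Lemma~\ref{lem:pmp_multidim} to the augmented $(N+1)$-dimensional process $X_t := (Z_t, M_1(t), \dots, M_N(t))$, where $M_k(t) := \mathbb{E}[Z_t^k \mid \mathcal{W}_t^0]$, whose dynamics are given by combining \eqref{eq:cond_poly_MV} with Lemma~\ref{lem:cond_poly_squareint}. First I would read off the $(N+1)\times 2$ diffusion matrix $\Sigma(X_t)$: the $Z$-row is $(\sqrt{c+\gamma Z+\Gamma Z^2},\, l+\Lambda Z)$ and the $M_k$-row is $(0,\, l M_{k-1}+\Lambda M_k)$ with $M_0\equiv 1$. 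Computing $\Sigma\Sigma^\top$ yields the diagonal entries $c+\gamma Z+\Gamma Z^2+(l+\Lambda Z)^2$ for the $Z$-coordinate and $(lM_{k-1}+\Lambda M_k)^2$ for the $M_k$-coordinate, with analogous bilinear cross-terms. The drift vector $B(X_t)$ is read off directly from Lemma~\ref{lem:cond_poly_squareint}.

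For the first state space $S=\mathbb{R}_{\geq 0}^{N+1}$, I would enumerate the possible boundary index sets $I(x_0)$. If $Z=0\in I(x_0)$, the $(Z,Z)$-entry of $\Sigma\Sigma^\top$ reduces to $c+l^2$, forcing $c\equiv l\equiv 0$; the corresponding drift is $b(\cdot)$, so $b\geq 0$ is needed. If $M_k=0\in I(x_0)$ with $k\geq 1$, the $(M_k,M_k)$-entry of $\Sigma\Sigma^\top$ becomes $l^2 M_{k-1}^2$ (since $M_k=0$), which again forces $l\equiv 0$; the cross-entries $(\Sigma\Sigma^\top)_{Z,M_k}=(l+\Lambda Z)(lM_{k-1})$ and $(\Sigma\Sigma^\top)_{M_k,M_j}=(lM_{k-1})(lM_{j-1}+\Lambda M_j)$ then vanish as well. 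With $l\equiv c\equiv 0$, the drift of $M_k$ at $M_k=0$ collapses to $M_{k-1}\bigl(kb+\tfrac{k(k-1)}{2}\gamma\bigr)$, which is non-negative since $M_{k-1}\geq 0$ and $b,\gamma\geq 0$. The conditions $\Gamma\geq 0$ and $\Lambda\geq 0$ ensure the square-root in the $Z$-SDE is well defined and safeguard the signs in higher-order drift terms appearing when several $M_k$'s simultaneously hit zero. The alternating-$\mathbb{R}/\mathbb{R}_{\geq 0}$ state space is handled analogously; the only coordinates with boundary are the even-indexed $M_{2k}$, and the requirements $\gamma\equiv 0$, $\Gamma,c\geq 0$, $c(x_0)=l(x_0)=0$ and $b(x_0)\geq 0$ at such boundary points $x_0$ are exactly what makes the relevant entries of $\Sigma\Sigma^\top$ vanish and the inward-pointing drift non-negative, using the same computation as before.

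For path-wise uniqueness under local Lipschitz continuity of $b,\beta,c,\gamma,\Gamma,l,\Lambda$, I would observe that the conditional-moment subsystem $(M_1(t),\dots,M_N(t))$ from Lemma~\ref{lem:cond_poly_squareint} is autonomous: its drift and diffusion depend only on $(M_1,\dots,M_N)$ themselves (not on $Z_t$), and these coefficients are locally Lipschitz compositions of the given maps. Hence \cite[Theorem 3.7]{EK} (or the classical SDE uniqueness result) applies and yields a path-wise unique strong solution to this $N$-dimensional $\Wcal^0$-driven SDE. Substituting this solution into \eqref{eq:cond_poly_MV} reduces the $Z$-equation to a one-dimensional time-inhomogeneous SDE whose coefficients are continuous in $t$ (via the moment process) and locally Lipschitz in $z$ for the drift, while the diffusion $\sqrt{\,\cdot\,}$ is locally $1/2$-Hölder in $z$. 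Path-wise uniqueness then follows from the one-dimensional Yamada–Watanabe criterion, as encoded in \cite[Theorem 3.8]{EK}, exactly as in the proofs of Propositions~\ref{prop:joint_poly_1} and~\ref{prop:joint_poly_2}.

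The main obstacle is bookkeeping the boundary cases in the second state space: when several even-indexed $M_{2k}$ vanish simultaneously, one must verify that all cross-terms of $\Sigma\Sigma^\top$ and all contributing drift terms behave well, which is why the pointwise constraints $c(x_0)=l(x_0)=0$ and $b(x_0)\geq 0$ are imposed at boundary points $x_0$ rather than globally. Everything else is direct verification from Lemma~\ref{lem:cond_poly_squareint} and Lemma~\ref{lem:pmp_multidim}.
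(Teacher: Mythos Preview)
Your proposal follows the paper's proof essentially step for step: you apply Lemma~\ref{lem:pmp_multidim} to the augmented process $(Z_t,\E[\Z_t\mid\Wcal_t^0])$, check the interior (square-root term non-negative) and then the boundary coordinates case by case to force the relevant entries of $\Sigma\Sigma^\top$ to vanish and the drift to point inward, and for path-wise uniqueness you first solve the autonomous locally Lipschitz moment subsystem via \cite[Theorem~3.7]{EK} and then feed it into the one-dimensional $Z$-equation, invoking \cite[Theorem~3.8]{EK}. The paper does exactly this, with the same references; your computation of $\Sigma\Sigma^\top$ is slightly more explicit than the paper's but the logic is identical.
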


\begin{proof}
We first study the case $S= \R_{\geq0} \times \dots \times \R_{\geq 0}$ and start with the interior, i.e. $x_0\in \R_{>0} \times \dots \times \R_{>0}$. Recalling~\eqref{eq:cond_poly_MV}, the term under the square-root is positive if $c, \gamma, \Gamma\geq 0$. Now turning to the boundary, let us consider $0\in I(x_0)$. Clearly, $B(x_0)_0 >0$ if $b\geq 0$ and the diffusion terms vanish if $l\equiv c\equiv 0$. For the cases $k\in I(x_0)$ for any $k\geq 1$, let us recall Lemma~\ref{lem:cond_poly_squareint}. Then $B(x_0)_k\geq 0$ if $b, \gamma, l, \Lambda\geq 0$ and $\Sigma(x_0)_{k2}=0$ if $l\equiv 0$.

Now for alternating state-spaces $S= \R\times \R \times \R_{\geq0} \times \dots \times \R \times \R_{\geq 0}$ the interior is $S^{\circ}= \R\times \R \times \R_{>0} \times \dots \times \R \times \R_{>0}$ which leads us to require $\gamma\equiv 0$ and $c, \Gamma \geq 0$ for the term under the square-root to be non-negative for all $x_0\in S$. Moreover, if some $k\in I(x_0)$ (where $k$ is even), we find $B(x_0)_k \geq 0$ if $\gamma(x_0)= c(x_0) =l(x_0)= 0$ and $b(x_0)\geq 0$ for all $x_0 \in \partial S$, whereas the diffusion term $\Sigma(x_0)_{k2}=0$ if $l(x_0) =0$ for all $x_0 \in \partial S$. 

We now turn to the claim of path-wise uniqueness. Let us recall the form of the SDE describing $(\E[\Z_t\lvert \Wcal_t^0])_{t\geq 0}$ as stated in Lemma~\ref{lem:cond_poly_squareint}. It is clear that if the maps $b, \beta, c, \gamma, \Gamma, l, \Lambda$ are locally Lipschitz-continuous, so are the associated drift and volatility terms and hence~\cite[Theorem 3.7]{EK} is applicable. 

Hence, the obtained path-wise unique solution of $(\E[\Z_t\lvert \Wcal_t^0])_{t\geq 0}$ can be plugged into the original SDE for $(Z_t)_{t\geq 0}$. While the volatility coefficient is not locally Lipschitz-continuous due to the square-root, this is now a one-dimensional SDE, and we can apply~\cite[Theorem 3.8]{EK} to conclude path-wise uniqueness. 
\end{proof}

\newpage
%\printbibliography

%\bibliographystyle{abbrv}
%\bibliography{references}

\newpage

\appendix

\section{ODE theory}

\begin{theorem}[Existence and Uniqueness of Solutions to non-linear ODEs, {see~\cite[page 171]{Hirsch_Morris}}]\label{thm:sol_ode}
Let us consider the following autonomous non-linear ODE:
\begin{equation*}
du_t = F(u_t)dt \quad u_0\in A\subseteq \R^n
\end{equation*}
where $u: [0,t^*) \rightarrow \R^n$ and $F: \R^n \rightarrow \R^n$. If $F$ is locally Lipschitz-continuous on $A$, i.e. $F\in \textrm{Lip}(A;\R^n)$, then there exists a unique solution $u: [0, t^*) \rightarrow \R^n$. We call the time $t^*$ the \emph{maximal time of existence} which is defined as $t^*:= \inf_{t\geq 0} \{u_t \in \partial A\}$ and $t^*=\infty$ if $u_t \in \partial A$ does not occur. In the latter case, we call a solution \emph{global}.
\end{theorem}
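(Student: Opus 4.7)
The plan is to follow the classical Picard--Lindel\"of argument, reformulating the ODE as an integral equation and then applying the Banach fixed-point theorem locally, before patching the local solutions together up to the maximal time of existence.

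First I would rewrite the Cauchy problem in its integral form, namely $u_t = u_0 + \int_0^t F(u_s)\,ds$, and observe that continuous solutions of this fixed-point equation coincide with $C^1$ solutions of the ODE. Since $F \in \mathrm{Lip}(A,\R^n)$, one can choose a closed ball $\overline{B}_r(u_0) \subset A$ on which $F$ is bounded by some $M$ and Lipschitz with some constant $L$. For $\tau>0$ small enough (roughly $\tau \leq \min(r/M, 1/(2L))$), the map
\begin{equation*}
(\Phi u)_t := u_0 + \int_0^t F(u_s)\,ds
\end{equation*}
is a contraction on the complete metric space $\{u \in C([0,\tau]; \overline{B}_r(u_0))\}$ endowed with the sup norm. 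Banach's fixed-point theorem then yields a unique local solution on $[0,\tau]$.

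Next I would extend this local solution. Let $t^\ast := \sup\{t>0 \mid \text{a solution exists on }[0,t) \text{ with values in } A\}$. Standard patching using uniqueness of the local solution shows that solutions agree on overlapping intervals, hence there is a unique maximal solution $u:[0,t^\ast)\to A$. To identify $t^\ast$ with the first exit time from $A$, I would argue by contradiction: if $t^\ast<\infty$ and $u_{t^\ast-}$ stays in a compact set strictly inside $A$, then the local existence theorem applied at the limit point would extend the solution beyond $t^\ast$, contradicting maximality. Consequently either $t^\ast = \infty$ (global solution) or $u_t$ approaches $\partial A$ as $t \nearrow t^\ast$.

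The routine obstacle is really just bookkeeping in the fixed-point step (choosing $\tau$ so that $\Phi$ both preserves the ball and contracts); the only conceptually delicate point is justifying that the maximal solution cannot be continued, which requires the ``escape from every compact subset of $A$'' characterization of $t^\ast$. All of this is standard ODE theory and is carried out in detail in the reference~\cite{Hirsch_Morris}, so in the paper it would suffice to cite that result.
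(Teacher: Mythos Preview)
Your proposal is correct and matches the paper's treatment: the paper does not give a proof of this theorem at all but simply states it with a citation to \cite[page 171]{Hirsch_Morris}, and you yourself note that citing the reference would suffice. The Picard--Lindel\"of argument you sketch is precisely the standard proof found in that reference.
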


\section{Proofs}

\subsection{Proof of Theorem~\ref{thm:sol_MP}}\label{app:sol_MP}
\begin{proof}[Proof of Theorem~\ref{thm:sol_MP}]
The proof regarding the existence of solutions is analogous to the proof of \cite[Theorem 5.3.10 and Proposition 5.3.5]{EK}. The only difference is that we 
define $$f_n(t,x):= \varphi\left(\frac{t}{n}\right)\varphi\left(\frac{x^2}{h^2(n)}\right)$$ 
for some $\varphi \in C^{\infty}_c([0,\infty))$ s.t. $\chi_{[0,1]} \leq \varphi \leq \chi_{[0,2]}$. We say a sequence of functions $\{f_n\}$ converges \emph{boundedly and point-wise} to $f$ (in short $\textrm{bp-lim}_{n\rightarrow \infty} f_n=  f$) if it holds that $\sup_n \lvert f_n \rvert<\infty$ and $\lim_{n\rightarrow \infty} f_n(x) = f(x)$ for all $x\in S$. Clearly, for the sequence $\{f_n\}_{n\in \N}$ as defined above, it holds that 
$$\textrm{bp-lim}_{n\rightarrow \infty} f_n= \chi_{[0, \infty)\times S}.$$ 
Let $\Delta$ be the point at infinity of $[0,\infty) \times S$ and denote by $\big([0,\infty) \times S\big)^{\Delta}$ the one-point-compactification of $[0,\infty) \times S$. For a generator $ \mathcal{H}^0$ we define its extension $( \mathcal{H}^0)^{\Delta}$ acting on $C(([0,\infty) \times S)^{\Delta})$ by
$$ ( \mathcal{H}^0)^{\Delta} f = f(\Delta) +  \mathcal{H}^0(f- f(\Delta))$$
for $f\in C\big(([0,\infty) \times S\big)^{\Delta})$. Then, for $g_n:= ( \mathcal{H}^0)^{\Delta}f_n$, it holds that $g_n \rightarrow 0$ point-wise. The crucial step, where the growth conditions~\eqref{eq:sigma_growth} and~\eqref{eq:xb_growth} are needed, is to ensure that $\inf_n \inf_{(t,x)} g_n(t,x) > -\infty$. By the definition of $g_n:= ( \mathcal{H}^0)^{\Delta}f_n$ and the properties of $\varphi$, we find that:
\begin{align*}
    \inf_{(t,x)} g_n(t,x)&\geq  \inf_{(t,x)} \frac{1}{n}\underbrace{\varphi'\left(\frac{t}{n}\right)}_{\leq 0} \underbrace{\varphi\left(\frac{x^2}{h^2(n)}\right)}_{\geq 0} + \inf_{(t,x)} \Bigg[\frac{2}{h^2(n)} x\cdot B(t,x) \underbrace{\varphi\left(\frac{t}{n}\right)}_{\geq 0} \underbrace{\varphi'\left(\frac{x^2}{h^2(n)}\right)}_{\leq 0}\\ & \quad +\frac{1}{h^2(n)} \sigma^2(t,x)\Bigg( \underbrace{\varphi'\left(\frac{x^2}{h^2(n)}\right)}_{\leq 0} \underbrace{\varphi\left(\frac{t}{n}\right)}_{\geq 0} + \frac{2x^2}{h^4(n)} \varphi''\left(\frac{x^2}{h^2(n)}\right)\underbrace{\varphi\left(\frac{t}{n}\right)}_{\geq 0}\Bigg)\Bigg].
\end{align*}

Considering the first term only, the 
infimum in $(t,x)$ is attained for some $x\in [0,h(n)]$ (resp. $x\in [-h(n), -h(n)]$ if $S=\R$ or $x\in [0, \min\{h(n), 1\}]$ if $S=[0,1]$) and $t \in [n, 2n]$. Then $$\inf_n \inf_{(t,x)} \frac{1}{n}\underbrace{\varphi'\left(\frac{t}{n}\right)}_{\leq 0} \underbrace{\varphi\left(\frac{x^2}{h^2(n)}\right)}_{\geq 0} >-\infty,
$$ and we are only left to study the second term. We first treat the cases $S=\R_{\geq 0}$ and $S=\R$ for which it follows that the infimum of the second term is attained for $t=n$ and $x= \alpha h(n) $ for some $\alpha\in[1,\sqrt{2}]$ (resp. $\alpha\in[-\sqrt{2}, -1]\cup[1,\sqrt{2}]$ if $S=\R$) fixed. Plugging this in and using~\eqref{eq:sigma_growth},~\eqref{eq:xb_growth}, we obtain 
\begin{align*}
&\inf_{n} \Bigg[ \frac{2\alpha h(n) B(n,\alpha h(n))}{h^2(n)} \underbrace{\varphi\left(1\right)}_{\geq 0} \underbrace{\varphi'\left(\alpha^2\right)}_{\leq 0} + \frac{\sigma^2(n,\alpha h(n))}{h^2(n)} \underbrace{\varphi\left(1\right)}_{\geq 0}\Bigg( \underbrace{\varphi'\left(\alpha^2\right)}_{\leq 0}  + 2\alpha^2 \varphi''\left(\alpha^2\right)\Bigg)\Bigg]\\ & \quad \geq \inf_{n}\Bigg[ \frac{2C^B\cdot(1 + h^2(n))}{h^2(n)} \underbrace{\varphi\left(1\right)}_{\geq 0} \underbrace{\varphi'\left(\alpha^2\right)}_{\leq 0} \\& \qquad+ \frac{C^{\sigma}\cdot(1 + h^2(n))}{h^2(n)} \Bigg( \underbrace{\varphi'\left(\alpha^2\right)}_{\leq 0} \underbrace{\varphi\left(1\right)}_{\geq 0} + 2\alpha^2 \varphi''\left(\alpha^2\right)\underbrace{\varphi\left( 1 \right)}_{\geq 0}\Bigg)\Bigg] \\& \quad > -\infty.
\end{align*}
For the case $S=[0,1]$ the reasoning above is exactly the same, except $x= \min\{\alpha h(n) , 1\}$ for some $\alpha \in [1,\sqrt{2}]$.\\

Secondly, the path-wise uniqueness follows by~\cite[Theorem 5.3.8 and Remark 5.3.9]{EK}. 
\end{proof}

\subsection{Proof of Lemma~\ref{lem:new_ass_maps}}\label{app:new_ass_maps}
\begin{proof}[Proof of Lemma~\ref{lem:new_ass_maps}]
We begin with the proof for the case of Assumption~\ref{as:A} and later turn to Assumption~\ref{as:B}.

Our first step is to show that $\big\lvert \E[Z_t]\big\rvert \leq (\lvert\E[Z_0]\rvert + b_0t)\exp((b_0 +\beta_0)t)$ for all $t\geq 0$. To do so, we can proceed similarly as in~\cite[Proof of Theorem 2.10]{Cuchiero_2012}. Let us introduce a sequence of stopping times $(\tau_n)_{n\in \N}$ with $\tau_n = \inf \{ t >0 \lvert \, \lvert Z_t \rvert \geq n \}$ and note that for any $n \in \N$
\begin{align*}
\big\lvert \E[Z_{t\wedge \tau_n}] \big\rvert &\leq\big\lvert \E[Z_0] \big\rvert + \Big \lvert \E\Big[\int_0^{t\wedge \tau_n} b(\E[\Z_s]) + \beta(\E[\Z_s])Z_s ds\Big] \Big \rvert + \big\lvert \E[M^{(1)}_{t\wedge \tau_n}]\big\rvert \\ &\leq \big\lvert \E[Z_0] \big\rvert + \int_0^{t} \big\lvert b(\E[\Z_{s\wedge \tau_n}])\big\rvert + \big\lvert\beta(\E[\Z_{s\wedge \tau_n}])\big\rvert \big\lvert\E[Z_{s\wedge \tau_n}]\big\rvert ds\\ & \leq \big\lvert \E[Z_0] \big\rvert  + \int_0^t b_0(1+ \big\lvert\E[Z_{s\wedge \tau_n}]\big\rvert) + \beta_0 \big\lvert\E[Z_{s\wedge \tau_n}]\big\rvert ds,
\end{align*}
where we used that $(M^{(1)}_{t\wedge \tau_n})_{t\geq0}$ is a martingale. Using Gronwall's inequality, i.e. the fact that if $$X_t \leq \alpha(t)+ \int_0^t \beta(s) X_s ds \Rightarrow X_t \leq \alpha(t)\exp\left(\int_0^t \beta(s)ds\right)$$
for $\alpha: [0,\infty) \rightarrow \R$ continuous and increasing and $\beta: [0, \infty) \rightarrow [0, \infty)$, we find that
$$\big\lvert \E[Z_{t\wedge \tau_n}]\big\rvert \leq \left(\big\lvert\E[Z_0]\big\rvert + b_0t\right)\exp\left((b_0 +\beta_0)t\right).$$
Since $(Z_t)_{t\geq0}$ is continuous and adapted, it is locally bounded and $\lim_{n\rightarrow \infty} \tau_n = \infty$ and %$\liminf_{n\rightarrow \infty} Z_{t\wedge \tau_n} = \lim_{n\rightarrow \infty} Z_{t\wedge \tau_n}$ 
and it thus follows by Fatou's lemma that for all $t\geq 0$
$$ \big\lvert \E[Z_t]\big\rvert \leq \liminf_{n\rightarrow \infty} \big \lvert \E[Z_{t\wedge\tau_n}]\big \rvert \leq \left(\big\lvert\E[Z_0]\big\rvert + b_0t\right)\exp\left((b_0 +\beta_0)t\right)=:\zeta(t),$$
where we abbreviate this upper bound by $\zeta$ for future convenience. 

We now move on to the case $k\geq 2$ and aim to show that \eqref{eq:true_mart} holds. Note that for $2\leq k\leq N$ we find
\begin{align*}
&\int_0^{t} \E\Big[\Big\lvert \big(c(\E[\Z_s])+ \gamma(\E[\Z_s])Z_s + \Gamma(\E[\Z_s])Z_s^2\big) Z_s^{2k-2}\Big\rvert\Big] ds \\& \quad \leq  \int_0^{t} \big\lvert c(\E[\Z_s])\big \rvert\cdot \E[\lvert Z_s\rvert^{2k-2}] + \big\lvert\gamma(\E[\Z_s])\big \rvert \cdot \E[\lvert Z_s\rvert^{2k-1}] + \big \lvert \Gamma(\E[\Z_s])\big \rvert \cdot\E[\lvert Z_s\rvert^{2k}] ds\\& \quad \leq \int_0^{t} c_0\big(1 + \|\E[\Z_s]\|^2_N + f_c(\lvert\E[Z_s]\rvert)\big)\cdot \E[\lvert Z_s\rvert^{2k-2}]ds\\ & \qquad + \int_0^t\gamma_0\big(1+  \|\E[\lvert \Z_s\rvert]\|_N + f_{\gamma}(\lvert\E[Z_s]\rvert)\big) \cdot   \E[\lvert Z_s\rvert^{2k-1}] + \Gamma_0 \cdot\E[\lvert Z_s\rvert^{2k}] ds
\end{align*}
Let us recall that $\lvert\E[Z_t]\rvert \leq \zeta(t)$ for all $t\geq 0$ and $\sup_{s\in[0,t]} f_c(\zeta(s)) = f_c(\zeta(t)) < \infty$ and that the same holds for $f_{\gamma}$ by the definition of being a continuous increasing function. 
Moreover, let us note that for all $i \in \{1, \dots, N\}$
\begin{align*}
\lvert \E[Z^i_t]\rvert^{\frac{2}{i}} = \lvert \E[Z^i_t]\rvert^{\frac{2N}{iN}} \leq \E[\lvert Z_t\rvert^{2N}]^{\frac{1}{N}} \leq 1+ \E[\lvert Z_t\rvert^{2N}]
\end{align*}
by Jensen's inequality and the fact that $x \leq 1 + x^N$ for all $x\in \R$. Hence, 
\begin{align*}
\|\E[\Z_t]\|_N \leq \sqrt{N + N\E[\lvert Z_t\rvert^{2N}]} \leq 1+ N+ N\E[\lvert Z_t\rvert^{2N}].
\end{align*}
Using the same reasoning, it holds for all $k \in \{1, \dots, N\}$ that 
\begin{align*}
\E[\lvert Z_t\rvert^{2k-2}]\leq 1 + \E[\lvert Z_t\rvert^{2N}],\quad \E[\lvert Z_t\rvert^{2k-1}]\leq 1 + \E[\lvert Z_t\rvert^{2N}]\quad \mathrm{and} \quad \E[\lvert Z_t\rvert^{2k}]\leq 1 + \E[\lvert Z_t\rvert^{2N}].
\end{align*}
Hence it is sufficient to show that for all $t\geq 0$,  $\sup_{s\in [0,t]}  \E[\lvert Z_s \rvert^{2N}]< \infty$ in order for \eqref{eq:true_mart} to hold for all $k\in \{1, \dots , N\}$. In the following we will denote by $\lesssim$ an inequality up to a multiplicative constant and if we write e.g. $\lesssim_{N}$ we mean that this multiplicative constant depends on $N$. 
We apply the same reasoning as above to show that 
\begin{align*}
\lvert \E[Z^i_t]\rvert^{\frac{2}{i}} = \lvert \E[Z^i_t]\rvert^{\frac{2N}{iN}} \leq \E[\lvert Z_t\rvert^{N}]^{\frac{2}{N}}\end{align*}
and hence, $$\|\E[\Z_t]\|_N \lesssim_N \E[\lvert Z_t\rvert^N]^{\frac{1}{N}}.$$
Again, we denote by $(\tau_n)_{n\in \N}$ the sequence of stopping times with $\tau_n :=\inf\{t>0 \lvert \, \lvert Z_t\rvert \geq n\}$. We are now ready to show that 
\begin{align*}
\E[\lvert &Z_{t\wedge \tau_n} \rvert^{2N}]  \leq \E\Bigg[ \Big \lvert Z_0  + \int_0^{t\wedge \tau_n} (b(\E[\Z_r])+ \beta(\E[\Z_r])Z_r)dr\\ & \quad + \int_0^{t\wedge\tau_n} \sqrt{c(\E[\Z_r])+ \gamma(\E[\Z_r])Z_r +\Gamma(\E[\Z_r]) Z_r^2} dW_r\Big \rvert^{2N}\Bigg] \\ & \lesssim_N \E[\lvert Z_0\rvert^{2N}] +  t^{2N-1}\E\Bigg[\int_0^{t} \lvert b(\E[\Z_{r\wedge \tau_n}])+ \beta(\E[\Z_{r\wedge \tau_n}])Z_{r\wedge \tau_n} \rvert^{2N} dr \Bigg] \\ & \quad + \E\Bigg [\int_0^{t} \lvert c(\E[\Z_{r\wedge \tau_n}])+ \gamma(\E[\Z_{r\wedge \tau_n}])Z_{r\wedge \tau_n} + \Gamma(\E[\Z_{r\wedge \tau_n}]) Z^2_{r\wedge \tau_n}\rvert^N dr  \Bigg] \\& \lesssim_N \E[\lvert Z_0\rvert^{2N}] + t^{2N-1}\int_0^t \lvert b(\E[\Z_{r\wedge \tau_n}]) \rvert^{2N} + (\beta_0)^{2N}\cdot  \E[\lvert Z_{r\wedge \tau_n} \rvert^{2N}] dr \\ & \quad + \int_0^t \lvert c(\E[\Z_{r\wedge \tau_n}])\rvert^N + \lvert\gamma(\E[\Z_{r\wedge \tau_n}])\rvert^N \E[\lvert Z_{r\wedge \tau_n}\rvert^N]+ (\Gamma_0)^N \E[\lvert Z_{r\wedge \tau_n}\rvert^{2N}]dr  \\ &\lesssim_N \E[\lvert Z_0\rvert^{2N}] + t^{2N-1}\int_0^t  (b_0)^{2N}(1 + \lvert\E[Z_{r\wedge \tau_n}]\rvert^{2N})+ (\beta_0)^{2N}\E[\lvert Z_{r\wedge \tau_n} \rvert^{2N}] dr \\ & \quad + \int_0^t \Big((c_0)^N \big(1+\E[\lvert Z_{r\wedge \tau_n}\rvert^{N}]^2 + f_c(\lvert \E[Z_{r\wedge \tau_n}]\rvert)^N\big)\\& \qquad+ (\gamma_0)^N\big(1 +\E[\lvert Z_r\rvert^{N}] + f_{\gamma}(\lvert\E[Z_{r\wedge \tau_n}]\rvert)^N\big)\E[\lvert Z_{r\wedge \tau_n}\rvert^N]+(\Gamma_0)^N\E[\lvert Z_{r\wedge \tau_n}\rvert^{2N}]\Big)dr  \\ & \leq \E[\lvert Z_0\rvert^{2N}] + ( b_0 t)^{2N} +  \left((b_0)^{2N}+ (\beta_0)^{2N}\right)t^{2N-1} \int_0^t \E\left[\lvert Z_{r\wedge \tau_n} \rvert^{2N}\right] dr \\&
\qquad+(c_0)^N(1+ f_c(\zeta(t))^N)t + (\gamma_0)^N(1+f_{\gamma}(\zeta(t))^N)t\\&
\quad + \int_0^t \Big( (c_0)^N+ ({\gamma}_0)^N\big(2+f_{\gamma}(\zeta(r))\big)+ (\Gamma_0)^N\Big) \E\left[\lvert Z_{r\wedge \tau_n}\rvert^{2N}\right]dr 
\end{align*}
where we have used the BDG inequality to obtain the second inequality. Moreover, we used Jensen's inequality and the fact that $(a+b)^n \leq 2^{n-1}(\lvert a\rvert^n +\lvert b \rvert^n)$ several times. Again, using Gronwall's inequality, it follows that 
\begin{align}\label{eq:E_sup_ineq_A1}
&\E[\lvert Z_{t\wedge \tau_n} \rvert^{2N}] \leq C_N \Big(\E[\lvert Z_0\rvert^{2N}]+ ( b_0t )^{2N}+  (c_0)^N(1+ f_c(\zeta(t))^N)t + (\gamma_0)^N(1+f_{\gamma}(\zeta(t))^N)t\Big)\nonumber \\&\qquad \cdot \exp{\Bigg(C_N\Big((b_0+\beta_0)^{2N}t^{2N-1}+(c_0)^N+ ({\gamma}_0)^N\big(2+f_{\gamma}(\zeta(t))\big)+ (\Gamma_0)^N \Big) t \Bigg)}=:\zeta_1^{(2N)}(t)
\end{align}
for some constant $C_N\in\R_{\geq0}$. By Fatou's lemma, $\E[ \lvert Z_t\rvert^{2N}] \leq \zeta_1^{(2N)}(t)$ and hence also $\sup_{s\in[0,t]} \E[\lvert Z_s \rvert^{2N}] \leq \zeta_1^{(2N)}(t)$. Therefore, we have shown that \eqref{eq:true_mart} holds for all $k\in \{1, \dots, N\}$ and $t\geq 0$. Note, for the above proof, we adapted the reasoning of the linear-growth conditions on coefficients of Itô-SDEs (see e.g.~\cite[Lemma 16.1.4]{Cohen_Eliott}) and translated them to the maps $b, \beta, c, \gamma, \Gamma: \R^N \rightarrow \R$. 

We have now established the case of the set of Assumptions~\ref{as:A}. The reasoning is very similar to the previous case. In particular, it is enough to show that $\sup_{s\in[0,t]} \E[\lvert Z_s\rvert^{2N}]< \infty$ for all $t\in [0,\infty)$ for \eqref{eq:true_mart} to hold. To this end, we introduce the sequence of stopping times $(\tau_n)_{n\in \N}$ with $\tau_n:=\inf\{ t> 0 \lvert \,  \lvert Z_t\rvert \geq n\}$. Using similar arguments as before, we find 
\begin{align*}
\E[\lvert Z_{t\wedge \tau_n} \rvert^{2N}] & \leq \E\Bigg[ \Big \lvert Z_0  + \int_0^{t\wedge \tau_n} (b(\E[\Z_r])+ \beta(\E[\Z_r])Z_r)dr\\ & \quad + \int_0^{t\wedge\tau_n} \sqrt{c(\E[\Z_r])+ \gamma(\E[\Z_r])Z_r +\Gamma(\E[\Z_r]) Z_r^2} dW_r\Big \rvert^{2N}\Bigg] \\ & \lesssim_N \E[\lvert Z_0\rvert^{2N}] + t^{2N-1}\int_0^t \lvert b(\E[\Z_{r\wedge \tau_n}]) \rvert^{2N} + (\beta_0)^{2N}\cdot  \E[\lvert Z_{r\wedge \tau_n} \rvert^{2N}] dr \\ & \quad + \int_0^t \lvert c(\E[\Z_{r\wedge \tau_n}])\rvert^N + \lvert\gamma(\E[\Z_{r\wedge \tau_n}])\rvert^N \E[\lvert Z_{r\wedge \tau_n}\rvert^N]+ (\Gamma_0)^N \E[\lvert Z_{r\wedge \tau_n}\rvert^{2N}]dr  \\ &\lesssim_N \E[\lvert Z_0\rvert^{2N}] + t^{2N-1}\int_0^t  (b_0)^{2N}(1 + \E[\lvert Z_{r\wedge \tau_n}\rvert^N]^{2})+ (\beta_0)^{2N}\E[\lvert Z_{r\wedge \tau_n} \rvert^{2N}] dr \\ & \quad + \int_0^t \Big((c_0)^N \big(1+\E[\lvert Z_{r\wedge \tau_n}\rvert^{N}]^2 \big)+ (\gamma_0)^N\big(1 +\E[\lvert Z_r\rvert^{N}] \big)\E[\lvert Z_{r\wedge \tau_n}\rvert^N]\\ & \qquad +(\Gamma_0)^N\E[\lvert Z_{r\wedge \tau_n}\rvert^{2N}]\Big)dr  \\ & \leq \E[\lvert Z_0\rvert^{2N}] + ( b_0 t)^{2N} +  \left((b_0)^{2N}+ (\beta_0)^{2N}\right)t^{2N-1} \int_0^t \E\left[\lvert Z_{r\wedge \tau_n} \rvert^{2N}\right] dr \\&
\qquad+(c_0)^Nt + (\gamma_0)^Nt + \int_0^t \Big( (c_0)^N+ ({\gamma}_0)^N+ (\Gamma_0)^N\Big) \E\left[\lvert Z_{r\wedge \tau_n}\rvert^{2N}\right]dr 
\end{align*}
and hence, again by Gronwall's inequality, it holds
\begin{align}\label{eq:E_sup_ineq_A2}
&\E[\lvert Z_{t\wedge \tau_n} \rvert^{2N}] \leq C_N\cdot \Big(\E[\lvert Z_0\rvert^{2N}]+ ( b_0t )^{2N}+  (c_0)^Nt + (\gamma_0)^N
t\Big)\nonumber \\&\qquad \cdot \exp{\Bigg(C'_N\cdot\Big((b_0+\beta_0)^{2N}t^{2N-1}+(c_0)^N+ ({\gamma}_0)^N+ (\Gamma_0)^N \Big) t \Bigg)}=:\zeta_2^{(2N)}(t)
\end{align}
for some constant $C'_N\in \R_{\geq 0}$. By Fatou's lemma, it holds $\E[\lvert Z_t\rvert^{2N}] \leq \zeta_2^{(2N)}(t)$ for all $t\in [0, \infty)$. Since this implies $\sup_{s\in[0,t]}\E[\lvert Z_s\rvert^{2N}] \leq \zeta_2^{(2N)}(t)$ for all $t\in [0, \infty)$ and we have shown the local martingale property also for the set of Assumptions~\ref{as:B}.
\end{proof}

\subsection{Proof of Proposition~\ref{prop:global_sol_ode}}\label{app:global_sol_ode}

\begin{proof}[Proof of Proposition~\ref{prop:global_sol_ode}]
The existence of a unique solution up to its maximal lifetime is assured by the Lipschitz condition. To see the global existence for $A= \R^{N+1}$, we apply Gronwall's inequality. Clearly, the zeroth component is constant, i.e. $\bar{z}_0(t)\equiv \bar{z}_0(0)= 1$. 

Let us first treat the set of Assumptions~\ref{as:A} to prove the statement, and later adapt the proof for the set of Assumptions~\ref{as:B}. To this end, we start by studying the first component and denote by $(t_n)_{n\in \N} $ a (deterministic) sequence of time-points such that $t_n := \inf\{t>0 \lvert \, \| z(t)\|^{2N}_N \geq n\}$. Recall that we have already established that a solution exists at least on some interval $[0, t^*)$ where $t^* = \inf\{ t>0 \lvert \,  \|z(t)\|^{2N}_N = \infty\}$. For global existence, we would like to show that $t^*= \infty$, for which a sufficient condition is that $\lim_{n\rightarrow \infty} t_n \rightarrow \infty$. Then, considering the first component, we obtain
\begin{align*}
    \sup_{s\in[0,t\wedge t_n]} \lvert  z_1(s) \rvert &= \sup_{s\in[0,t\wedge t_n]}\Big\lvert z_1(0)+ \int_0^s {\beta}(z(r))z_1(r)+ b(z(r)) dr \Big\rvert \\ & = \lvert Z_0 \rvert + \int_0^{t\wedge t_n} \lvert\beta(z(r))\rvert\lvert z_1(r)\rvert+ \lvert b(z(r))\rvert dr \\ & \leq  \lvert z_1(0) \rvert + \int_0^{t\wedge t_n} \beta_0 \lvert z_1(r\wedge t_n) \rvert + b_0 + K_b\lvert z_1(r\wedge t_n) \rvert dr \\ &\leq \lvert Z_0 \rvert + b_0({t\wedge t_n}) + \int_0^{t\wedge t_n} (\beta_0 + K_b) \sup_{u\in [0,r\wedge t_n]} \lvert z_1(u) \rvert dr
\end{align*}
and hence by Gronwall's inequality, we find
$$ \sup_{s\in [0,t\wedge t_n]} \lvert z_1(s)\rvert \leq \left(\lvert z_1(0) \rvert + b_0 ({t\wedge t_n})\right)\cdot \exp\left( (\beta_0 + K_b)({t\wedge t_n})  \right) =: h_1(t\wedge t_n).$$

We will not treat the following components one-by-one, but instead study $\|z(t)\|_N $.
Note that the first component plays a special role in this set of assumptions as it only depends on $\beta, b$ which themselves are bounded by the first component, while the other maps' bounds depend on all components.
Clearly, if $\|z(t)\|^{2N}_N < \infty$ for all $t\in [0, \infty)$ this implies  $\lvert Z_1(t)\rvert, \lvert z_2(t)\rvert, \dots, \lvert z_N(t)\rvert < \infty$ for all $t\in [0, \infty)$. In preparation to that, let us note that for all $k \in \{1, \dots N\}$ and $x,y \in \R$ it holds that $\lvert x\rvert^k + \lvert y \rvert^k\leq (\lvert x \rvert + \lvert y \rvert)^k$. Hence, it also holds that for all $k \in \{1, \dots, N\}$ and $t\geq 0$ that 
\begin{align*}
\lvert z_k(t) \rvert^2 = \left(\lvert z_k(t) \rvert^{\frac{2}{k}}\right)^k \leq \left(\lvert z_k(t) \rvert^{\frac{2}{k}}\right)^k + \Big(\sum_{\substack{j=1\\ j\neq k}}^N \lvert z_j(t) \rvert^{\frac{2}{j}}\Big)^k \leq \Big(\sum_{j=1}^N \lvert z_j(t) \rvert^{\frac{2}{j}}\Big)^k = \Big(\|z(t)\|_N\Big)^{2k}.
\end{align*}
We are now ready to study $\|z(t\wedge t_n)\|_N^{2N}$. 
\begingroup
\allowdisplaybreaks
\begin{align*}
&\|z(t\wedge t_n)\|_N^{2N} \lesssim_N \sum_{j=1}^N \lvert z_j(t)\rvert^{\frac{2N}{j}} \\ & \quad \leq \sum_{j=1}^N \Bigg\lvert z_j(0)+ \int_0^{t\wedge t_n} \left(j\cdot\beta(z(s)) + \frac{j(j-1)}{2} \Gamma(z(s))\right) \bar{z}_j(s) ds \\ & \qquad + \int_0^{t\wedge t_n} \left(j\cdot b(z(s)) + \frac{j(j-1)}{2} \gamma(z(s)) \right) \bar{z}_{j-1}(s) ds \\ & \qquad + \mathbf{1}_{\{j>1\}}\int_0^{t\wedge t_n}   \frac{j(j-1)}{2} c(z(s))\bar{z}_{j-2}(s) ds\Bigg\rvert^{\frac{2N}{j}} \\ & \quad\lesssim_N \sum_{j=1}^N \lvert z_j(0)\rvert^{\frac{2N}{j}}+ (t\wedge t_n)^{\frac{2N}{j}-1}\int_0^{t\wedge t_n}  \left(\lvert \beta(z(s)) \rvert^{\frac{2N}{j}}+ \mathbf{1}_{\{j>1\}} \lvert \Gamma(z(s))\rvert^{\frac{2N}{j}}\right)\lvert z_j(s)\rvert^{\frac{2N}{j}} ds \\ &\qquad + (t\wedge t_n)^{\frac{2N}{j}-1}\int_0^{t\wedge t_n}  \left( \lvert b(z(s))\rvert^{\frac{2N}{j}} + \mathbf{1}_{\{j>1\}} \lvert\gamma(z(s))\rvert^{\frac{2N}{j}}\right)\lvert \bar{z}_{j-1}(s)\rvert^{\frac{2N}{j}} ds \\ &\qquad + \mathbf{1}_{\{j>1\}} (t\wedge t_n)^{\frac{2N}{j}-1}\int_0^{t\wedge t_n}  \lvert c(z(s))\rvert^{\frac{2N}{j}} \lvert \bar{z}_{j-2}(s)\rvert^{\frac{2N}{j}} ds\\ & \quad \lesssim_N \|z(0)\|_N^{2N}+ (1+ \beta_0^{2N}+ \Gamma_0^{2N})(1+ t\wedge t_n)^{2N}\int_0^{t\wedge t_n}\|z(s)\|^{2N}_N ds\\ & \qquad + \big(b_0(1+ h_1(t))\big)^{2N} \sum_{j=1}^N (t\wedge t_n)^{\frac{2N}{j}-1}\int_0^{t\wedge t_n} \lvert \bar{z}_{j-1}(s)\rvert^{\frac{2N}{j}}ds \\ & \qquad + \sum_{j=2}^N (t\wedge t_n)^{\frac{2N}{j}-1}\int_0^{t\wedge t_n}  \big(\gamma_0(1+ f_{\gamma}(\lvert z_1(s)\rvert) + \|z(s)\|_N)\big)^{\frac{2N}{j}} \lvert \bar{z}_{j-1}(s)\rvert^{\frac{2N}{j}} ds \\ &\qquad + \sum_{j=2}^N (t\wedge t_n)^{\frac{2N}{j}-1}\int_0^{t\wedge t_n}  \big(c_0(1+ f_{c}(\lvert z_1(s)\rvert) + \|z(s)\|_N^2)\big)^{\frac{2N}{j}} \lvert \bar{z}_{j-2}(s)\rvert^{\frac{2N}{j}} ds\\ &\quad \lesssim_N \|z(0)\|_N^{2N}+ (1+ \beta_0^{2N}+ \Gamma_0^{2N})(1+ t\wedge t_n)^{2N}\int_0^{t\wedge t_n}  \|z(s)\|^{2N}_N ds\\ & \qquad + \big(b_0(1+ h_1(t\wedge t_n))\big)^{2N} \sum_{j=1}^N (t\wedge t_n)^{\frac{2N}{j}-1}\int_0^{t\wedge t_n} \left(1+ \| z(s)\|_N^j\right)\rvert^{\frac{2N}{j}}ds \\ & \qquad + \sum_{j=2}^N (t\wedge t_n)^{\frac{2N}{j}-1}\int_0^{t\wedge t_n}  \big(\gamma_0(1+ f_{\gamma}(\lvert z_1(s)\rvert) + \|z(s)\|_N)\big)^{\frac{2N}{j}} (1+\| z(s)\|_N^{j-1})^{\frac{2N}{j}} ds\\ &\qquad +\sum_{j=2}^N (t\wedge t_n)^{\frac{2N}{j}-1}\int_0^{t\wedge t_n} \big(c_0(1+ f_{c}(\lvert z_1(s)\rvert) + \|z(s)\|^2_N)\big)^{\frac{2N}{j}} (1+\| z(s)\|_N^{j-2})^{\frac{2N}{j}} ds\\ &\quad \lesssim_N \|z(0)\|_N^{2N} + (1+ \beta_0^{2N}+ \Gamma_0^{2N})(1+ t\wedge t_n)^{2N}\int_0^{t\wedge t_n} \|z(s)\|^{2N}_N ds \\ & \qquad + (1+ t\wedge t_n)^{2N}\Big[\Big(b_0 + \gamma_0+ c_0\Big)\Big(1+ h_1(t\wedge t_n) +f_{\gamma}\big(h_1(t\wedge t_n)\big)+ f_c\big(h_1(t \wedge t_n)\big)\Big)\Big]^{2N} \\& \qquad \cdot \int_0^{t\wedge t_n}  (2 + \|z(s)\|_N^{2N}) ds\\ & \qquad +  \sum_{j=2}^N \Big[ (1+\gamma_0^{2N})  (t\wedge t_n)^{\frac{2N}{j}-1}\int_0^{t\wedge t_n}(\|z(s)\|_N^{\frac{2N}{j}} + \|z(s)\|_N^{2N}) ds \\ & \qquad \qquad + (1+ c_0^{2N})(t\wedge t_n)^{\frac{2N}{j}-1}\int_0^{t\wedge t_n} (\|z(s)\|_N^{\frac{4N}{j}}+ \|z(s)\|_N^{2N}) ds \Big] \\ &\quad \lesssim_N \|z(0)\|_N^{2N} + \Big[\Big(1+ \beta_0+ \Gamma_0 +b_0 + \gamma_0+ c_0\Big)\\& \qquad \cdot \Big(1+ h_1(t\wedge t_n) +f_{\gamma}\big(h_1(t\wedge t_n)\big)+ f_c\big(h_1(t\wedge t_n)\big)\Big)(1+ t\wedge t_n)\Big]^{2N} \\ & \qquad \cdot \Big[ t+ \int_0^t \|z({s\wedge t_n} )\|_N^{2N} ds \Big]
\end{align*}
\endgroup
Hence, by an application of Gronwall's inequality, we can show that $\|z(t\wedge t_n)\|_N^{2N}< h_{2N}(t\wedge t_n)$ for all $n\in \N$ and $t\geq 0$ and a monotonically increasing continuous function $h_{2N}: [0, \infty) \rightarrow [0, \infty)$. Now, let's assume $\lim_{n\rightarrow \infty} t_n = t^* < \infty$. However,  
$$\lim_{n\rightarrow \infty}\|z(t\wedge t_n)\|_N^{2N}\leq \lim_{n\rightarrow \infty}h_{2N}(t\wedge t_n) = h_{2N}(t^*)< \infty$$ which is a contradiction because by definition $\lim_{n\rightarrow \infty}\|z(t\wedge t_n)\|_N^{2N} \rightarrow \infty$. Hence, we have shown that indeed a global solution exists under the set of Assumptions~\ref{as:A}. 

The proof using the set of Assumptions~\ref{as:B} is structurally very similar. Since the first component does not play a special role in this set of assumptions, we can start directly by treating $\| z(t) \|_N^{2N}$, where we introduce again the sequence $(t_n)_{n\in \N}$ with $$t_n := \inf\{t>0 \,\lvert \, \|z(t)\|_N^{2N} \geq n\}.$$ Using the same arguments as before, we can show 
\begingroup
\allowdisplaybreaks
\begin{align*}
&\|z(t\wedge t_n)\|_N^{2N} \lesssim_N \sum_{j=1}^N \lvert z_j(t)\rvert^{\frac{2N}{j}} \\ & \quad\lesssim_N \sum_{j=1}^N \lvert z_j(0)\rvert^{\frac{2N}{j}}+ (t\wedge t_n)^{\frac{2N}{j}-1}\int_0^{t\wedge t_n}  \left(\lvert \beta(z(s)) \rvert^{\frac{2N}{j}}+ \mathbf{1}_{\{j>1\}} \lvert \Gamma(z(s))\rvert^{\frac{2N}{j}}\right)\lvert z_j(s)\rvert^{\frac{2N}{j}} ds \\ &\qquad + (t\wedge t_n)^{\frac{2N}{j}-1}\int_0^{t\wedge t_n}  \left( \lvert b(z(s))\rvert^{\frac{2N}{j}} + \mathbf{1}_{\{j>1\}} \lvert\gamma(z(s))\rvert^{\frac{2N}{j}}\right)\lvert \bar{z}_{j-1}(s)\rvert^{\frac{2N}{j}} ds \\ &\qquad + \mathbf{1}_{\{j>1\}} (t\wedge t_n)^{\frac{2N}{j}-1}\int_0^{t\wedge t_n}  \lvert c(z(s))\rvert^{\frac{2N}{j}} \lvert \bar{z}_{j-2}(s)\rvert^{\frac{2N}{j}} ds\\ & \quad \lesssim_N \|z(0)\|_N^{2N}+ (1+ \beta_0^{2N}+ \Gamma_0^{2N})(1+ t\wedge t_n)^{2N}\int_0^{t\wedge t_n}\|z(s)\|^{2N}_N ds\\ & \qquad + \sum_{j=1}^N (t\wedge t_n)^{\frac{2N}{j}-1}\int_0^{t\wedge t_n} \big(b_0(1+ \|z(s)\|_N)\big)^{\frac{2N}{j}} \lvert \bar{z}_{j-1}(s)\rvert^{\frac{2N}{j}}ds \\ & \qquad + \sum_{j=2}^N (t\wedge t_n)^{\frac{2N}{j}-1}\int_0^{t\wedge t_n}  \big(\gamma_0(1+ \|z(s)\|_N)\big)^{\frac{2N}{j}} \lvert \bar{z}_{j-1}(s)\rvert^{\frac{2N}{j}} ds \\ &\qquad + \sum_{j=2}^N (t\wedge t_n)^{\frac{2N}{j}-1}\int_0^{t\wedge t_n}  \big(c_0(1+ \|z(s)\|_N^2)\big)^{\frac{2N}{j}} \lvert \bar{z}_{j-2}(s)\rvert^{\frac{2N}{j}} ds\\ &\quad \lesssim_N \|z(0)\|_N^{2N}+ (1+ \beta_0^{2N}+ \Gamma_0^{2N})(1+ t\wedge t_n)^{2N}\int_0^{t\wedge t_n}  \|z(s)\|^{2N}_N ds \\ & \qquad + \sum_{j=2}^N (t\wedge t_n)^{\frac{2N}{j}-1}\int_0^{t\wedge t_n}  \big((b_0+\gamma_0)(1+ \|z(s)\|_N)\big)^{\frac{2N}{j}} (1+\| z(s)\|_N^{j-1})^{\frac{2N}{j}} ds\\ &\qquad +\sum_{j=2}^N (t\wedge t_n)^{\frac{2N}{j}-1}\int_0^{t\wedge t_n} \big(c_0(1 + \|z(s)\|^2_N)\big)^{\frac{2N}{j}} (1+\| z(s)\|_N^{j-2})^{\frac{2N}{j}} ds\\ &\quad \lesssim_N \|z(0)\|_N^{2N} + (1+ \beta_0^{2N}+ \Gamma_0^{2N})(1+ t\wedge t_n)^{2N}\int_0^{t\wedge t_n} \|z(s)\|^{2N}_N ds \\ & \qquad + (1+ t\wedge t_n)^{2N}\Big(b_0 + \gamma_0+ c_0\Big)^{2N} \int_0^{t\wedge t_n}  (1 + \|z(s)\|_N^{2N}) ds\\ & \qquad +  \sum_{j=2}^N \Big[ (1+\gamma_0^{2N}+ b_0^{2N})  (t\wedge t_n)^{\frac{2N}{j}-1}\int_0^{t\wedge t_n}(\|z(s)\|_N^{\frac{2N}{j}} + \|z(s)\|_N^{2N}) ds \\ & \qquad \qquad + (1+ c_0^{2N})(t\wedge t_n)^{\frac{2N}{j}-1}\int_0^{t\wedge t_n} (\|z(s)\|_N^{\frac{4N}{j}}+ \|z(s)\|_N^{2N}) ds \Big] \\ &\quad \lesssim_N \|z(0)\|_N^{2N} + \Big[\big(1+ \beta_0+ \Gamma_0 +b_0 + \gamma_0+ c_0\big)\big(1+ t\wedge t_n\big)\Big]^{2N} \Big[ t+ \int_0^t \|z({s\wedge t_n} )\|_N^{2N} ds \Big]
\end{align*}
\endgroup

By Gronwall's inequality, we find that $\|z(t\wedge t_n)\rvert_N^{2N}< g_{2N}(t\wedge t_n)$ for some monotonically increasing continuous function $g_{2N}: [0, \infty) \rightarrow [0, \infty)$. The existence of a global solution follows by the same arguments as before. 
\end{proof}

\subsection{Proof of Theorem~\ref{thm:dual_moment_formula}}\label{app:dual_moment_formula}
\begin{proof}[Proof of Theorem~\ref{thm:dual_moment_formula}]
 Recall that by the Definitions~\ref{def:transition_op} and~\ref{def:infi_generator} the process $(Z_t)_{t\geq 0}$ given by~\eqref{eq:dual_SDE} is a time-inhomogeneous polynomial process with associated transition operators $(P_{s,t})_{(s,t)\in \Delta}$ and infinitesimal generators $(\mathcal{H}_s)_{s\geq0}$. This is due to the assumption of a unique solution to the associated martingale problem.

 Recall that by the definition of polynomial processes, the transition operators and the infinitesimal generators are linear operators on the space of polynomials, and we can associate with them matrices $(\mathbf{P}_{s,t})_{(s,t)\in \Delta}$ and $(\mathbf{H}_s)_{s\geq0}$ with respect to some basis of polynomials of degree $N$, see Section~\ref{subsec:intro_poly}. In fact, it is straightforward to check that $$\mathbf{H}_t = \tilde{\mathbf{H}}(\mathbf{vec}^{\Z_0,\cc}(t)).$$

 Note that the structure of $\tilde{\mathbf{H}}(\cdot)$ is the same as $\mathbf{L}(\cdot)^{\mathsf{T}}$ introduced in Subsection~\ref{subsec:primal}. Although this was not explicit in the primal derivation, recall that we did establish in the proof of Corollary~\ref{coro:primal_moment_formula} that $\mathbf{L}^{\mathsf{T}}(\cdot)$ corresponds to the matrix-form of the infinitesimal generator of~\ref{eq:Poly_MV}. 

By the definition of time-inhomogeneous polynomial processes, we know that there exists a $\cc^*: \Delta \times \R^{N+1} \rightarrow \R^{N+1}$ such that for all $u\in \R^{N+1}$ $$ \cc^*(T,t, u) := \mathbf{P}_{t,T}\cdot u$$ 
with $$ \E[ Z_T^i \lvert \mathcal{F}_t] = \langle \bar{\Z}_t, \mathbf{P}_{t,T}\cdot e_i\rangle = \langle \bar{\Z}_t, \cc^*(T,t,e_i)\rangle $$
as well as
$$\E[\langle \bar{\Z}_T , u\rangle \lvert \mathcal{F}_t] = \sum_{i=0}^N u_i \E[\langle \bar{\Z}_T , e_i\rangle \lvert \mathcal{F}_t] =  \langle \bar{\Z}_t, \sum_{i=0}^N u_i \mathbf{P}_{t,T}\cdot e_i\rangle= 
\langle \bar{\Z}_t, \cc^*(T,t, u)\rangle.$$
Let us also recall Example~\ref{ex:P_op} for an illustration of the columns of $\mathbf{P}_{t, T}$. Even though the explicit moment formula (Theorem~\ref{thm:moment_formula}) only holds on finite time horizons (due to convergence issues of the Magnus expansion), the above relation holds on all of $\Delta$. 

By Lemma~\ref{lem:intro_fwd_bwd} it holds that 
\begin{align*}
    \frac{\d^+}{\d t} \cc^*(t,0, e_i)&=  \frac{\d^+}{\d t} \mathbf{P}_{0,t}\cdot e_i = \mathbf{P}_{0,t}\cdot \tilde{\mathbf{H}}(\mathbf{vec}^{\Z_0,\cc}(t))\cdot e_i \\ &= \mathbf{P}_{0,t}\cdot \left(\sum_{j=0}^N e_j \cdot e_j^{\mathsf{T}}\right)\cdot \tilde{\mathbf{H}}(\mathbf{vec}^{\Z_0,\cc}(t))\cdot e_i \\ & =\sum_{j=0}^N \mathbf{P}_{0,t}\cdot e_j \cdot e_j^{\mathsf{T}}\cdot \tilde{\mathbf{H}}(\mathbf{vec}^{\Z_0,\cc}(t))\cdot e_i \\& = \sum_{j=0}^N \cc^*(t, 0, e_j) \tilde{\mathbf{H}}(\mathbf{vec}^{\Z_0,\cc}(t))_{ji} \quad \textrm{ for } \cc^*(0,0,e_i)=e_i
\end{align*}
\begin{align*}
   \frac{\d^+}{\d t} \cc^*(T,t, e_i)&= \frac{\d^+}{\d t} \mathbf{P}_{t,T}\cdot e_i= -\tilde{\mathbf{H}}(\mathbf{vec}^{\Z_0,\cc}(t))\cdot \mathbf{P}_{t,T}\cdot e_i\\ & = -\tilde{\mathbf{H}}(\mathbf{vec}^{\Z_0,\cc}(t))\cdot \cc^*(T, t, e_i) \quad \textrm{ for } \cc^*(T,T,e_i)=e_i. 
\end{align*}
But since we assumed that the map $\cc$ is the unique solution to \eqref{eq:fwd_ode_c} and \eqref{eq:bwd_ode_c}, then it holds that $\cc^*=\cc$ and we proved the claim. 

\end{proof}

\subsection{Proof of Lemma~\ref{lem:fubini_type}}\label{app:fubini_type}
\begin{proof}[Proof of Lemma~\ref{lem:fubini_type}]
We recall the $L^2$-convergence of the It\^o integral as formulated e.g. in~\cite[Section 3.2, particulary Definition 2.9]{Karatzas_BM}. That is, for any square-integrable $(H_t)_{\geq 0}$ there exists a sequence of simple processes $((H^{(n)}_t)_{\geq 0})_{n\geq 0}$ such that 
$$ \lim_{n\rightarrow \infty} \E\left[ \left(\int_0^t H^{(n)}_s dW_s  - \int_0^t H_s dW_s\right)^2\right] = 0.$$

Let us also note the simple fact that for any sequence $X^{(n)} \xrightarrow[]{L^2} X$ it also holds that $\E[X^{(n)}\lvert \mathcal{M}] \xrightarrow[]{L^2} \E[X^{}\lvert \mathcal{M}]$ for any sub-$\sigma$-algebra $\mathcal{M}$ since 
\begin{align*}
\lim_{n\rightarrow \infty} \E[(\E[X^{(n)}\lvert \mathcal{M}]- \E[X^{}\lvert \mathcal{M}])^2 ]&= \lim_{n\rightarrow \infty} \E[(\E[X^{(n)}- X^{}\lvert \mathcal{M}])^2 ] \\ &\leq \lim_{n\rightarrow \infty} \E[\E[(X^{(n)}- X^{})^2\lvert \mathcal{M}] ] = \lim_{n\rightarrow \infty} \E[(X^{(n)}- X^{})^2]
\end{align*}
by Jensen's inequality and the tower property. 

We now proceed to show the first statement. Clearly, for all $n>0$ it holds that $$\E\left[ \int_0^t H^{(n)}_s dW_s \Big\lvert \Wcal_t^0 \right]=0. $$
To see this, we use the tower property, i.e. 
\begin{align*}
\E\left[ \int_0^t H^{(n)}_s dW_s \Big\lvert \Wcal_t^0 \right]&= \sum_{i=0}^{n-1} \E[H^{(n)}_{t_i}(W_{t\wedge t_{i+1}}- W_{t\wedge t_{i}})\lvert \Wcal_t^0] \\ &= 
\sum_{i=0}^{n-1} \E\left[\E[H^{(n)}_{t_i}(W_{t\wedge t_{i+1}}- W_{t\wedge t_{i}})\lvert \sigma(W_{t_i}, W^0_t)]\big\lvert \Wcal_t^0\right] \\ &= 
\sum_{i=0}^{n-1} \E\Big[H^{(n)}_{t_i}\underbrace{\E[(W_{t\wedge t_{i+1}}- W_{t\wedge t_{i}})\lvert \sigma(W_{t_i}, W^0_t)]}_{=0}\big\lvert \Wcal_t^0\Big] =0,
\end{align*}
where $\sigma(W_{t_i}, W^0_t)$ denotes the sigma algebra generated by $W_{t_i}, W^0_t$. Hence, $$\E\left[ \int_0^t H^{(n)}_s dW_s \Big\lvert \Wcal_t^0 \right] \xrightarrow[]{L^2} 0 $$
and on the other hand 
$$\E\left[ \int_0^t H^{(n)}_s dW_s \Big\lvert \Wcal_t^0 \right] \xrightarrow[]{L^2} \E\left[ \int_0^t H^{}_s dW_s \Big\lvert \Wcal_t^0 \right] $$
and it follows $ \E\left[ \int_0^t H^{}_s dW_s \big\lvert \Wcal_t^0 \right] = 0 $ almost surely. 

Similarly, we show the second statement. Note that 
\begin{align*}
\E\left[ \int_0^t H^{(n)}_s dW^0_s \Big\lvert \Wcal_t^0 \right]&= \sum_{i=0}^{n-1} \E[H^{(n)}_{t_i}(W^0_{t\wedge t_{i+1}}- W^0_{t\wedge t_{i}})\lvert \Wcal_t^0] \\ &= \sum_{i=0}^{n-1} \E[H^{(n)}_{t_i}\lvert \Wcal_t^0] (W^0_{t\wedge t_{i+1}}- W^0_{t\wedge t_{i}})= \int_0^t \E[H^{(n)}_s \lvert \Wcal^0_t] dW^0_s
\end{align*}
Moreover, for all $s,t\geq 0$ it obviously holds that $\E[H^{(n)}_s \lvert \Wcal^0_t] \xrightarrow[]{L^2}\E[H_s \lvert \Wcal^0_t] $ by assumption that $H^{(n)}  \xrightarrow[]{L^2} H$. Hence, by the $L^2$-convergence of the It\^o integral 
$$ \lim_{n\rightarrow \infty} \E\left[ \int_0^t H^{(n)}_s dW^0_s \Big\lvert \Wcal_t^0 \right] = \lim_{n\rightarrow \infty}  \int_0^t \E\left[H^{(n)}_s \big\lvert \Wcal_t^0 \right] dW^0_s \xrightarrow[]{L^2} \int_0^t \E\left[H^{}_s \big\lvert \Wcal_t^0 \right] dW^0_s.$$
On the other hand, we know that $$\E\left[\int_0^t H^{(n)}_s dW^0_s \big \lvert \Wcal_t^0 \right]\xrightarrow[]{L^2} \E\left[\int_0^t H^{}_s dW^0_s \big \lvert \Wcal_t^0 \right]$$
and thereby it follows that $$\E\left[\int_0^t H^{}_s dW^0_s \big \lvert \Wcal_t^0 \right] = \int_0^t \E\left[H^{}_s \big\lvert \Wcal_t^0 \right] dW^0_s$$ holds almost surely. 

\end{proof}

\subsection{Proof of Lemma~\ref{lem:cond_poly_squareint}}\label{app:cond_poly_squareint}
\begin{proof}[Proof of Lemma~\ref{lem:cond_poly_squareint}]
Using It\^o's formula, we obtain that 
\begin{align*}
&\E[Z_T^k \lvert \Wcal_T^0]= \E[Z_0^k \lvert \Wcal_T^0] + \int_0^T\Bigg\{\mathbf{1}_{\{k\geq2\}}\frac{k(k-1)}{2} \big(c(\E[\Z_t \lvert \Wcal_t^0]) + l^2(\E[\Z_t \lvert \Wcal_t^0])\big) \E[Z^{k-2}_t \lvert \Wcal_T^0]\\ &\,\, + \Big(k\cdot b(\E[\Z_t \lvert \Wcal_t^0]) + \frac{k(k-1)}{2}\big(\gamma(\E[\Z_t \lvert \Wcal_t^0]) + 2l(\E[\Z_t \lvert \Wcal_t^0])\Lambda(\E[\Z_t \lvert \Wcal_t^0]) \big) \Big) \E[Z^{k-1}_t \lvert \Wcal_T^0] \\ & \,\, + \big( k\cdot \beta(\E[\Z_t \lvert \Wcal_t^0]) + \frac{k(k-1)}{2}(\Gamma(\E[\Z_t \lvert \Wcal_t^0]) + \Lambda^2(\E[\Z_t \lvert \Wcal_t^0])\big) \E[Z^k_t \lvert \Wcal_T^0] \Bigg\} dt \\ & \,\,+ \E\Bigg[\int_0^T \Big( l(\E[\Z_t \lvert \Wcal_t^0])Z^{k-1}_t + \Lambda(\E[\Z_t \lvert \Wcal_t^0]) Z^{k}_t \Big) dW_t^0\Big \lvert \Wcal_T^0\Bigg]
\\ &\,\, + \E\Bigg[\int_0^T\Big(c(\E[\Z_t \lvert \Wcal_t^0])Z^{2k-2}_t + \gamma(\E[\Z_t \lvert \Wcal_t^0])Z^{2k-1}_t+ \Gamma(\E[\Z_t \lvert \Wcal_t^0]) Z^{2k}_t \Big)^{\frac{1}{2}} dW_t\Big \lvert \Wcal^0_T\Bigg].
\end{align*}
We know that the last term vanishes if the integrand is square-integrable, thanks to Lemma~\ref{lem:fubini_type}. Likewise, we can exchange integration and conditional expectation in the second-last term, if the integrand is square-integrable, again due to Lemma~\ref{lem:fubini_type}. Let us note that $\E[\Z_t \lvert \Wcal^0_T]=\E[\Z_t \lvert \Wcal^0_t]$ for all $t\leq T$. We also recall the following inequality, which we have derived in Subsection~\ref{subsec:primal} for the case of the expectation, namely that for all $i \in \{1, \dots, N\}$
\begin{align*}
\lvert \E[Z^i_t\lvert\Wcal_t^0]\rvert^{\frac{2}{i}} = \lvert \E[Z^i_t\lvert\Wcal_t^0]\rvert^{\frac{2N}{iN}} \leq \E[\lvert Z_t\rvert^{N}\lvert\Wcal_t^0]^{\frac{2}{N}}\end{align*}
and hence $$\|\E[\Z_t\lvert\Wcal_t^0]\|_N \lesssim_N \E[\lvert Z_t\rvert^N\lvert\Wcal_t^0]^{\frac{1}{N}}.$$

We now check square-integrability of the integrand of the last term. 

\begin{align*}
&\int_0^T \E\left[ \lvert c(\E[\Z_t \lvert \Wcal_t^0])Z^{2k-2}_t + \gamma(\E[\Z_t \lvert \Wcal_t^0])Z^{2k-1}_t + \Gamma(\E[\Z_t \lvert \Wcal_t^0])Z^{2k}_t \rvert \right]dt \\ &\quad \lesssim_N \int_0^T c_0\left (\E[\lvert Z_t\rvert^{2k-2}]+ \E\left[\big\lvert\E[\lvert Z_t\rvert^N\lvert \Wcal_t^0]^{\frac{2}{N}}Z_t^{2k-2}\big\rvert\right]\right) dt \\ & \qquad + \int_0^T \gamma_0\left (\E[\lvert Z_t\rvert^{2k-1}]+ \E\left[\big\lvert\E[\lvert Z_t\rvert^N\lvert \Wcal_t^0]^{\frac{1}{N}}Z_t^{2k-1}\big\rvert\right]\right) + \Gamma_0 \E[\lvert Z^{2k}_t\rvert] dt 
\end{align*}
Note that for $k> 1$ 
\begin{align*}
&\E\left[\big\lvert\E[\lvert Z_t\rvert^N\lvert \Wcal_t^0]^{\frac{2}{N}}Z_t^{2k-2}\big\rvert\right] = \E\left[\E[\lvert Z_t\rvert^N\lvert \Wcal_t^0]^{\frac{2}{N}}\E[\lvert Z_t\rvert^{2k-2}\lvert \Wcal_t^0]\right] \\ &\leq \E\left[\E[\lvert Z_t\rvert^{2N}\lvert \Wcal_t^0]^{\frac{1}{N}}\E[\lvert Z_t\rvert^{2(k-1)}\lvert \Wcal_t^0]^{\frac{(k-1)N}{(k-1)N}}\right] \leq \E\left[\E[\lvert Z_t\rvert^{2N}\lvert \Wcal_t^0]^{\frac{1}{N}}\E[\lvert Z_t\rvert^{2N}\lvert \Wcal_t^0]^{\frac{k-1}{N}}\right] \\ & = 
\E\left[\E[\lvert Z_t\rvert^{2N}\lvert \Wcal_t^0]^{\frac{k}{N}}\right] \leq (1 + \E\left[\E[\lvert Z_t\rvert^{2N}\lvert \Wcal_t^0]\right]) = \left(1 + \E[ \lvert Z_t\rvert^{2N}]\right),
\end{align*}
while for $k=1$
$$\E\left[\big \lvert  \E[\lvert Z_t\rvert^N\lvert \Wcal_t^0]^{\frac{2}{N}}Z_t^{2k-2} \big\rvert  \right] = \E\left[  \E[\lvert Z_t\rvert^N\lvert \Wcal_t^0]^{\frac{2}{N}}  \right] \leq \E\left[1+  \E[\lvert Z_t\rvert^{2N}\lvert \Wcal_t^0]\right] = \left(1+ \E[\lvert Z_t\rvert^{2N}]\right) $$
and likewise for $k\geq 1$: 
\begin{align*}
&\E\left[\big\lvert\E[\lvert Z_t\rvert^N\lvert \Wcal_t^0]^{\frac{1}{N}}Z_t^{2k-1}\big\rvert\right] = \E\left[\E[\lvert Z_t\rvert^N\lvert \Wcal_t^0]^{\frac{1}{N}}\E[\lvert Z_t\rvert^{2k-1}\lvert \Wcal_t^0]\right] \\ &\leq \E\left[\E[\lvert Z_t\rvert^{N}\lvert \Wcal_t^0]^{\frac{1}{N}}\E[\lvert Z_t\rvert^{2k-1)}\lvert \Wcal_t^0]^{\frac{(2k-1)N}{(2k-1)N}}\right] \leq \E\left[\E[\lvert Z_t\rvert^{N}\lvert \Wcal_t^0]^{\frac{1}{N}}\E[\lvert Z_t\rvert^{N}\lvert \Wcal_t^0]^{\frac{2k-1}{N}}\right] \\ & = 
\E\left[\E[\lvert Z_t\rvert^{N}\lvert \Wcal_t^0]^{\frac{2k}{N}}\right] \leq (1 + \E\left[\E[\lvert Z_t\rvert^{2N}\lvert \Wcal_t^0]\right]) = \left(1 + \E[ \lvert Z_t\rvert^{2N}]\right).
\end{align*}
Similarly, it holds for the second last term that 
\begin{align*}
&\int_0^T \E\left[\big\lvert l(\E[\Z_t \lvert \Wcal_t^0])Z^{k-1}_t + \Lambda(\E[\Z_t \lvert \Wcal_t^0]) Z^{k}_t  \big\rvert^2\right] dt \\ & \qquad \leq \int_0^T l_0\left(\E[\lvert Z_t\rvert^{2k-2}]+ \E\left[ \big \lvert \E[\lvert Z_t\rvert^N\lvert \Wcal_t^0]^{\frac{2}{N}} Z^{2k-2}_t \big \rvert\right]\right) + \Lambda_0 \E[Z_t^{2k}]  dt
\end{align*}
which can be treated using the same inequalities as above. 

Hence, it is enough to show that $\sup_{t\in [0,T]} \E[ \lvert Z_t\rvert^{2N}]< \infty$ for all $T\in [0, \infty)$ in order to show square-integrability.  To prove this, we can proceed exactly like in the proof of Lemma~\ref{lem:new_ass_maps} and show the statement using a Gronwall-type inequality. 
Again, we introduce a sequence of stopping-times $(\tau_n)_{n\in \N}$ with $\tau_n= \inf\{t\geq 0 \lvert \, \lvert Z_t\rvert \geq n\}$. We obtain that
\begin{align*}
&\E\left[\lvert Z_{t\wedge \tau_n}\rvert^{2N}\right] = \E\Bigg[ \Big\lvert Z_0 + \int_0^{t\wedge \tau_n} b(\E[\Z_s \lvert \Wcal_s^0]) + \beta(\E[\Z_s \lvert \Wcal_s^0])Z_s ds \\ & \qquad +\int_0^{t\wedge \tau_n} \sqrt{c(\E[\Z_s \lvert \Wcal_s^0]) + \gamma(\E[\Z_s \lvert \Wcal_s^0]) Z_s + \Gamma(\E[\Z_s \lvert \Wcal_s^0]) Z^2_s} dW_s \\ & \qquad +\int_0^{t\wedge \tau_n} l(\E[\Z_s \lvert \Wcal_s^0])+\Lambda(\E[\Z_s \lvert \Wcal_s^0])Z_sdW^0_s \Big\rvert^{2N}\Bigg]\\ & \lesssim_N \E\left[\lvert Z_0\rvert^{2N}\right] + \E\Bigg[t^{2N-1}\int_0^{t} \big \lvert b(\E[\Z_{s\wedge \tau_n} \lvert \Wcal_{s}^0]) + \beta(\E[\Z_{s\wedge \tau_n} \lvert \Wcal_{s}^0]) Z_{s\wedge \tau_n} \big \rvert^{2N}ds \Bigg] \\ & \qquad + \E\Bigg[\int_0^t \big\lvert c(\E[\Z_{s\wedge \tau_n} \lvert \Wcal_{s}^0])+ \gamma(\E[\Z_{s\wedge \tau_n} \lvert \Wcal_{s}^0]) Z_{s\wedge \tau_n} + \Gamma(\E[\Z_{s\wedge \tau_n} \lvert \Wcal_{s}^0])Z^2_{s\wedge \tau_n} \big\rvert^{N} ds\Bigg] \\ & \qquad + \E\Bigg[\int_0^t \big\lvert l(\E[\Z_{s\wedge \tau_n} \lvert \Wcal_s^0])+ \Lambda(\E[\Z_{s\wedge \tau_n} \lvert \Wcal_s^0])Z_{s\wedge \tau_n}\big\rvert^{2N} ds\Bigg] \\ & \lesssim_N \E\left[\lvert Z_0\rvert^{2N}\right] + t^{2N-1}\int_0^{t}  b_0^{2N}\left(1+ \E\left[\E[\lvert Z_{s\wedge \tau_n}\rvert^N \lvert \Wcal_s^0]^2\right]\right) + \beta_0^{2N} \E[\lvert Z_{s\wedge \tau_n} \rvert^{2N}]ds \\ & \quad + \int_0^t c_0^{N}\left(1+ \E\left[\E[\lvert Z_{s\wedge \tau_n}\rvert^N \lvert \Wcal_s^0]^2\right]\right) + \Gamma_0^{N}\E[\lvert Z_{s\wedge\tau_n}\rvert^{2N}] ds \\ & \quad + \int_0^t \gamma_0^N \left( \E[\lvert Z_{s\wedge \tau_n}\rvert^N] + \E\left[\E[\lvert Z_{s\wedge \tau_n}\rvert^{N}\lvert \Wcal_s^0]\lvert Z_{s\wedge\tau_n}\rvert^N\right]\right) ds \\& \quad +  \int_0^t l_0^{2N}\left(1+ \E\left[\E[\lvert Z_{s\wedge \tau_n}\rvert^N \lvert \Wcal_s^0]^2\right]\right) + \Lambda_0^{2N} \E[\lvert Z_{s\wedge \tau_n} \rvert^{2N}] ds \\ & \lesssim_N \E\left[\lvert Z_0\rvert^{2N}\right] + (b_0t)^{2N} + c_0^{2N}t+ \gamma_0^{2N}t + l_0^{2N}t \\ & \quad + \left(b_0^{2N}t^{2N-1} + \beta_0^{2N}t^{2N-1} + c_0^N + \gamma_0^N+ \Gamma_0^N+ l_0^{2N}+ \Lambda_0^{2N}\right)\int_0^t \E[\lvert Z_{s\wedge\tau_n}\rvert^{2N}] ds
\end{align*}
where we used that $$\E\left[\E[\lvert Z_{s\wedge \tau_n}\rvert^N \lvert \Wcal_s^0]^2\right] \leq \E\left[\E[\lvert Z_{s\wedge \tau_n}\rvert^{2N} \lvert \Wcal_s^0]\right] = \E\left[\lvert Z_{s\wedge \tau_n}\rvert^{2N} \right] $$ and $$\E\left[\E[\lvert Z_{s\wedge \tau_n}\rvert^{N}\lvert \Wcal_s^0]\lvert Z_{s\wedge\tau_n}\rvert^N\right]= \E\left[\E[\lvert Z_{s\wedge \tau_n}\rvert^{N}\lvert \Wcal_s^0]^2\right]\leq \E\left[\lvert Z_{s\wedge \tau_n}\rvert^{2N} \right].$$

Hence, by an application of Gronwall's inequality, we have indeed found an upper bound for $\E\left[\lvert Z_{t\wedge \tau_n}\rvert^{2N}\right] \leq \Theta(t)$ for a continuous increasing function $\Theta: [0, \infty)\rightarrow [0, \infty)$. And moreover, by the fact that $(\tau_n)_{n\in\N}$ and Fatou's lemma, we obtain that also $\E\left[\lvert Z_{t}\rvert^{2N}\right] \leq \Theta(t)$. 
\end{proof}

\subsection{Proof of Theorem~\ref{thm:cond_existence_MP}}\label{app:cond_existence_MP}
\begin{proof}[Proof of Theorem~\ref{thm:cond_existence_MP}]
To see this, we describe $(Z_t, \E[\Z_t \lvert \Wcal_t^0])_{t\geq 0}$ as a joint system of SDEs  and recall \cite[Theorem 3.3.10]{EK} regarding sufficient conditions for the existence of solutions to SDEs via solutions to the martingale problem. 
To this end, let us write $X_t= (Z_t, \E[\Z_t \lvert \Wcal_t^0])$ for all $t\geq0$ which is described by:
$$ dX_t = B(X_t) dt + \Sigma(X_t) d\tilde{W}_t
$$
where $B: \R^{N+1} \rightarrow \R^{N+1}$, $\Sigma: \R^{N+1} \rightarrow \R^{(N+1)\times 2}$ and $\tilde{W}_t= (W_t, W_t^0)$ for all $t\geq 0$. In particular, we emphasize that we are now in a time-homogeneous setting. 
By \cite[Theorem 3.3.10]{EK} we are left to check that there exists a $K\geq 0$, such that $$ (X_t)_j \cdot B(X_t)_j \leq K(1+ \|X_t\|^2)\textrm{ and } \left\lvert \left(\Sigma(X_t)\Sigma(X_t)^{\mathsf{T}}\right)_{ij}\right\rvert \leq K(1+ \|X_t\|^2)$$
for all $0\leq i, j \leq N$. Note that we use the notation $(X_t)_0:= Z_t$ and $(X_t)_k:= \E[Z^k_t \lvert \Wcal_t^0]$ for all $1\leq k \leq N$. We assume w.l.o.g that $N$ is even\footnote{If the minimal $N$ we consider would actually be odd, it is no problem to go up to $N+1$ instead.}.  Note, that 
\begin{align*}
B(X_t)_0 &= b(\E[\Z_t\lvert \Wcal_t^0]) + \beta(\E[\Z_t\lvert \Wcal_t^0])Z_t\\
B(X_t)_k &= \Big(k\cdot b(\E[\Z_t \lvert \Wcal_t^0]) + \frac{k(k-1)}{2}\big(\gamma(\E[\Z_t \lvert \Wcal_t^0]) \\ & \qquad+ l(\E[\Z_t \lvert \Wcal_t^0])\Lambda(\E[\Z_t \lvert \Wcal_t^0]) \big) \Big) \E[Z^{k-1}_t \lvert \Wcal_t^0]\\ &\qquad  +\mathbf{1}_{\{k\geq 2\}} \frac{k(k-1)}{2} \big(c(\E[\Z_t \lvert \Wcal_t^0]) + l^2(\E[\Z_t \lvert \Wcal_t^0])\big) \E[Z^{k-2}_t \lvert \Wcal_t^0] \\ & \qquad + \big( k\cdot \beta(\E[\Z_t \lvert \Wcal_t^0]) + \frac{k(k-1)}{2}(\Gamma(\E[\Z_t \lvert \Wcal_t^0]) + \Lambda^2(\E[\Z_t \lvert \Wcal_t^0])\big) \E[Z^k_t \lvert \Wcal_t^0] 
\end{align*}

Clearly it holds that $$(X_t)_0 \cdot B(X_t)_0 \lesssim_N Z_t\cdot \left(b_0(1+ 
\E[\lvert Z_t\rvert^N\lvert \Wcal_t^0]^{\frac{1}{N}})+ \beta_0Z_t\right)\leq K(1+ \|X_t\|^2)$$
since $ ab \leq (a+b)^2 \lesssim \lvert a\rvert^2 + \lvert b\rvert^2 $ for any $a,b \in \R$. Similarly, it holds for $k \in \{1, \dots, N\}$
\begin{align*}
(X_t)_k \cdot B(X_t)_k \leq & \Big(kb_0 + \frac{k(k-1)}{2}(\gamma_0 + l_0\Lambda_0)\Big)\Big\lvert \E[Z_t^{k-1}\lvert \Wcal_t^0] \E[Z_t^{k}\lvert \Wcal_t^0]\Big\rvert\Big(1+ \|\E[\Z_t\lvert \Wcal^0_t]\|_N\Big) \\ & + \mathbf{1}_{\{k\geq 2\}} \frac{k(k-1)}{2} \Big( c_0+l_0^2\big(1+ \|\E[\Z_t\lvert \Wcal_t^0]\|_N\big)^2\Big) \Big\lvert \E[Z^{k-2}_t \lvert \Wcal_t^0] \E[Z_t^k\lvert \Wcal_t^0] \Big \rvert \\& + \Big(k\beta_0 + \frac{k(k-1)}{2}(\Gamma_0 + \Lambda_0^2)\Big)\Big\lvert \E[Z_t^k\lvert \Wcal_t^0] \Big\rvert^2
\\ \leq& K(1 + \|X_t\|^2)
\end{align*}
where we used that $\|\E[\Z_t\lvert\Wcal_t^0]\|_N \lesssim_N \E[\lvert Z_t\rvert^N \lvert \Wcal_t^0]^{\frac{1}{N}}$ together with 
\begin{align*}
\Big \lvert \E[Z_t^{k-1}\lvert \Wcal_t^0] \E[Z_t^{k}\lvert \Wcal_t^0] \Big\rvert &= \big \lvert \E[Z_t^{k-1}\lvert \Wcal_t^0]\big\rvert^{\frac{N(k-1)}{N(k-1)}}\big\lvert\E[Z_t^{k}\lvert \Wcal_t^0] \big\rvert^{\frac{Nk}{Nk}} \\ &\leq \E[\lvert Z_t\rvert^{N}\lvert \Wcal_t^0]\big\rvert^{\frac{(k-1)}{N}} \E[\lvert Z_t\rvert^{N}\lvert \Wcal_t^0]\big\rvert^{\frac{k}{N}} \\ &= \E[\lvert Z_t\rvert^{N}\lvert \Wcal_t^0]\big\rvert^{\frac{2k-1}{N}} 
\end{align*}
and similarly 
\begin{align*}
\Big \lvert \E[Z_t^{k-2}\lvert \Wcal_t^0] \E[Z_t^{k}\lvert \Wcal_t^0] \Big\rvert \leq \E[\lvert Z_t\rvert^{N}\lvert \Wcal_t^0]\big\rvert^{\frac{2k-2}{N}}
\end{align*}
and hence 
\begin{align*}
\big \lvert \E[Z_t^{k-1}\lvert \Wcal_t^0] \E[Z_t^{k}\lvert \Wcal_t^0] \big\rvert \|\E[\Z\lvert\Wcal_t^0]\|_N &\lesssim \E[\lvert Z_t\rvert^{N}\lvert \Wcal_t^0]\big\rvert^{\frac{2k}{N}} \leq (1 +  \E[\lvert Z_t\rvert^{N}\lvert \Wcal_t^0]\big\rvert^{2})\\
\big \lvert \E[Z_t^{k-2}\lvert \Wcal_t^0] \E[Z_t^{k}\lvert \Wcal_t^0] \big\rvert \|\E[\Z\lvert\Wcal_t^0]\|^2_N &\lesssim \E[\lvert Z_t\rvert^{N}\lvert \Wcal_t^0]\big\rvert^{\frac{2k}{N}} \leq (1 +  \E[\lvert Z_t\rvert^{N}\lvert \Wcal_t^0]\big\rvert^{2}).
\end{align*}
On the other hand, for $j, k \in \{1, \dots, N\}$ we find 
\begin{align*}
\left(\Sigma(X_t)\Sigma(X_t)^{\mathsf{T}}\right)_{jk}&= \underbrace{\Sigma(X_t)_{j0}\Sigma(X_t)_{k0}}_{=0}+\Sigma(X_t)_{j1}\Sigma(X_t)_{k1}\\ &= \Big(l(\E[\Z_t \lvert \Wcal_t^0])\E[Z^{k-1}_t\lvert \Wcal_t^0] + \Lambda(\E[\Z_t \lvert \Wcal_t^0])\E[ Z^{k}_t \lvert \Wcal_t^0]\Big)\\ & \quad \cdot \Big(l(\E[\Z_t \lvert \Wcal_t^0])\E[Z^{j-1}_t\lvert \Wcal_t^0] + \Lambda(\E[\Z_t \lvert \Wcal_t^0]) \E[Z^{j}_t\lvert \Wcal_t^0] \Big).
\end{align*}
%Let us recall that for the state space $\R_{\geq 0}$, i.e. $Z_t \geq 0$ for all $t\geq 0$, we require $c \equiv l \equiv 0$ and $\lambda, \gamma, \Lambda, \Gamma \geq 0$. Alternatively, in the case that the state space is all of $\R$, we require $\lambda\equiv \gamma\equiv 0$ and $c, l, \Lambda, \Gamma\geq 0$. In the following, we assume we are either in one case or the other, i.e. the square-root terms are well-defined and the summands under the square-root are non-negative. 
%Moveover,
%\begin{align*}
    %&\E\Bigg[\Big(l(\E[\Z_t \lvert \Wcal_t^0])Z^{2k-2}_t + \lambda(\E[\Z_t \lvert \Wcal_t^0])Z^{2k-1}_t+ \Lambda(\E[\Z_t \lvert \Wcal_t^0]) Z^{2k}_t \Big)^{\frac{1}{2}}\Big \lvert \Wcal_t^0\Bigg] \\ & \quad \leq \sqrt{l(\E[\Z_t \lvert \Wcal_t^0])} \E\left[Z^{k-1}_t \big \lvert \Wcal_t^0 \right]+ \sqrt{\lambda(\E[\Z_t \lvert \Wcal_t^0])}\left(1 + \E\left[  Z^k_t \big\lvert \Wcal_t^0 \right]\right)+ \sqrt{\Lambda(\E[\Z_t \lvert \Wcal_t^0])} \E\left[ Z^{k-1}_t \big \lvert \Wcal_t^0 \right] \\ &\quad \leq \left(\sqrt{l_0}+ \sqrt{K_l}\E\left[ Z_t \big \lvert \Wcal_t^0\right] \right) \E\left[ Z^{k-1} \big \lvert \Wcal_t^0\right]+ \sqrt{\lambda_0} + \left( \sqrt{\lambda_0} + \sqrt{\Lambda_0} \right)\E\left[ Z^k_t  \big \lvert \Wcal_t^0\right]
%\end{align*}
Using the growth conditions of $l, \Lambda$ and the fact that
\begin{align}\label{eq:ineq_cond} \|\E[ \Z_t \lvert \Wcal_t^0]\|_N \E[\lvert Z_t\rvert^{k-1} \lvert \Wcal_t^0]\lesssim_N \E\left[\lvert Z_t\rvert^N  \lvert \Wcal_t^0\right]^{\frac{k}{N}}\leq 1+ \E\left[ \lvert Z_t\rvert^N \big \lvert \Wcal_t^0 \right]
\end{align}
it is clear that 
$$\left\lvert \left(\Sigma(X_t)\Sigma(X_t)^{\mathsf{T}}\right)_{jk}\right\rvert \leq K(1+ \|X_t\|^2)$$
for all $i,j\in\{1, \dots, N\}$. Whereas 
\begin{align*}
\left\lvert\left(\Sigma(X_t)\Sigma(X_t)^{\mathsf{T}}\right)_{00}\right\rvert&= \left\lvert\left(\Sigma(X_t)_{00}\right)^2+ \left(\Sigma(X_t)_{01}\right)^2 \right\rvert\\ &=  \Big\lvert c(\E[\Z_t \lvert \Wcal_t^0]) + \gamma(\E[\Z_t \lvert \Wcal_t^0])Z_t+ \Gamma(\E[\Z_t \lvert \Wcal_t^0])Z_t^2 \\ & \quad + l(\E[\Z_t \lvert \Wcal_t^0])^2 + 2 l(\E[\Z_t \lvert \Wcal_t^0])\Lambda(\E[\Z_t \lvert \Wcal_t^0])Z_t+ \Lambda(\E[\Z_t \lvert \Wcal_t^0])^2Z_t^2\Big\rvert\\ &\leq K(1 + \|X_t\|^2)
\end{align*}
by assumptions on the maps $c, l, \gamma,  \Gamma, \Lambda$. We are now only left with the terms $\left(\Sigma(X_t)\Sigma(X_t)^{\mathsf{T}}\right)_{0k}$ for $ k \in \{1, \dots, N\}$, where 
\begin{align*}
\left\lvert \left(\Sigma(X_t)\Sigma(X_t)^{\mathsf{T}}\right)_{0k}\right\rvert &= \big\lvert \Sigma(X_t)_{01}\Sigma(X_t)_{k1}\big\rvert = \Big\lvert \Big(l(\E[\Z_t \lvert \Wcal_t^0]) +  \Lambda(\E[\Z_t \lvert \Wcal_t^0])Z_t\Big) \\& \quad \cdot \left(l(\E[\Z_t \lvert \Wcal_t^0])\E[Z^{k-1}_t \lvert \Wcal_t^0] + \Lambda(\E[\Z_t \lvert \Wcal_t^0]) \E[Z^{k}_t\lvert \Wcal_t^0] \right)\Big \rvert \\ &\lesssim_N \Big\lvert \Big(l_0 + l_0\E[\lvert Z_t\rvert^N \lvert \Wcal_t^0]^{\frac{1}{N}} + \Lambda_0\lvert Z_t\rvert \Big) \\ & \quad \cdot  \left(l_0\E[ \lvert Z_t \rvert^N\lvert \Wcal_t^0]^{\frac{k-1}{N}} + l_0\E[ \lvert Z_t \rvert^N\lvert \Wcal_t^0]^{\frac{k}{N}}+ \Lambda_0\E[ \lvert Z_t\rvert^k\lvert \Wcal_t^0]  \right) \Big \rvert \\& \leq K(1+\|X_t\|^2).
\end{align*}
\end{proof}

\end{document}